\newcommand{\PP}{\mathbb{P}}
\newcommand{\RR}{\mathbb{R}}
\renewcommand{\a}{\alpha}
\renewcommand{\b}{\beta}
\renewcommand{\d}{\delta}
\newcommand{\e}{\varepsilon}
\newcommand{\cM}{{\mathcal{M}}}
\newcommand{\s}{\sigma}
\newcommand{\iy}{\infty}
\newcommand{\dint}{\displaystyle\int}
\newcommand{\dyle}{\displaystyle}
\newtheorem{theorem}{Theorem}[section]
\newtheorem{lemma}[theorem]{Lemma}
\newtheorem{proposition}[theorem]{Proposition}
\newtheorem{corollary}[theorem]{Corollary}
\newtheorem{definition}[theorem]{Definition}
\newtheorem{example}[theorem]{Example}
\newtheorem{remark}[theorem]{Remark}
\newtheorem{Remarks}[theorem]{Remarks}
\numberwithin{equation}{section}
\newcommand{\supp}[1]{\mbox{{supp\/}}#1}
\author[E. Colorado]{E. Colorado$^{(1)}$}
\address{Departamento de Matem\'aticas, Universidad Carlos III de Madrid,
Avenida de la Universidad 30, 28911 Legan\'es, Madrid, Spain}
\email{ecolorad@math.uc3m.es}
\thanks{$(1)$ Partially supported by Research Projects of
MICINN-Spain (Refs. MTM2009-10878, MTM2010-18128).}
\author[D. Pestana]{D. Pestana$^{(2)}$}
\address{Departamento de Matem\'aticas, Universidad Carlos III de Madrid,
Avenida de la Universidad 30, 28911 Legan\'es, Madrid, Spain}
\email{dompes@math.uc3m.es}
\thanks{$(2)$
%Supported in part by a grant from Ministerio de Ciencia e Innovaci\'on (MTM 2009-07800), Spain.}
Partially supported by Research Project of MICINN-Spain (Ref. MTM2009-07800)}
\author[J.\ M. Rodr{\'\i}guez]{J. M. Rodr{\'\i}guez$^{(2),(3)}$}
\address{Departamento de Matem\'aticas, Universidad Carlos III de Madrid,
Avenida de la Universidad 30, 28911 Legan\'es, Madrid, Spain}
\email{jomaro@math.uc3m.es}
\thanks{${(3)}$ Partially supported by Research Project of CONACYT-Mexico (Ref. CONACYT-UAG I0110/62/10).}
\author[E. Romera]{E. Romera$^{(4)}$}
\address{Departamento de Matem\'aticas, Universidad Carlos III de Madrid,
Avenida de la Universidad 30, 28911 Legan\'es, Madrid, Spain}
\email{eromera@math.uc3m.es}
\thanks{$(4)$ Partially supported by Research Project of MICINN-Spain (Ref. MTM2010/00005/001).}
\date{\today}
\begin{document}

\title[Muckenhoupt inequality with three measures]
{Muckenhoupt inequality with three measures and applications to Sobolev orthogonal polynomials}

\begin{abstract}

We generalize the classical Muckenhoupt inequality with two measures to
three under appropriate  conditions. As a consequence, we prove a simple
characterization of the boundedness of the multiplication operator
and thus of the boundedness of the zeros and the asymptotic behavior of the Sobolev orthogonal polynomials, for a large class of measures
which includes the most usual examples in the literature.

\

%\hspace{.2cm} {\it Key words and phrases:} Muckenhoupt inequality; multiplication operator;
%zero location; weight;
%Sobolev orthogonal polynomials;
%weighted Sobolev spaces.\\
%{\it 2010 AMS Subject Classification:} 41A10, 46E35.
\end{abstract}

\maketitle{}
%%%%%%%%%%%%%%%%%%%%%%%%%%%%%%%%%%%%%%%%%%%%%%%%%%%%%%%%%%%%%%%%%%%%%%%%%%%%%%%%
\section{Introduction}

The starting point of our work is the classical Muckenhoupt inequality, i.e., there exists a  constant $c>0$ such that
\begin{equation} \label{muc-inequality0}
\|f\|_{L^q(\mu)} \le c \, \|f' \|_{L^{p}(\nu)}
\end{equation}
for any regular enough function $f$,  where
$1< p \le q < \iy$, and $\mu, \nu$ are nonnegative $\s$-finite Borel measures on $(0,\iy)$.
We are interested in considering three measures instead of
two in this inequality.
Precisely, we look for conditions on the measures $\nu_1, \nu_2, \nu_3$
for which it is true
\begin{equation} \label{maininequality0}
\|f\|_{L^p(\nu_1)} \le c \left( \|f\|_{L^{p}(\nu_2)} + \|f' \|_{L^{p}(\nu_3)}  \right)
\end{equation}
for all regular enough functions $f$ (see the precise statement in \eqref{eq:main}).
The inequality \eqref{maininequality0} is obviously true if $\nu_1\le k \nu_2$ for some constant $k$.

\medskip

We remember the precise result about the Muckenhoupt inequality with the necessary and sufficient condition in
order to \eqref{muc-inequality0} be satisfied.
\begin{theorem}[Muckenhoupt \cite{Mu}] \label{t:00}
Assume $1< p \le q < \iy$, let $\mu, \nu$ be nonnegative $\s$-finite Borel measures on $(0,\iy)$.
Then there exists a constant $C$ such that
\begin{equation} \label{muc-inequality00}
\left\|  \dint_0^x f(t) \, dt \right\|_{L^q((0,\iy), \mu)}
\le
C \| f\|_{L^p((0,\iy), \nu)}
\end{equation}
holds for all measurable functions $f$ in $(0,\infty)$ iff
\begin{equation}\label{muc-condition}
B:= \sup\limits_{r>0} \mu\left([r,\iy)\right)^{1/q}
 \left[\int_0^r \left(\dfrac {d \nu}{dt} \right)^{-1/(p-1)} dt \right]^{(p-1)/p} < \iy.
\end{equation}
\end{theorem}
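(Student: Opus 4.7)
The plan is to establish both implications of the biconditional separately.

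For the \emph{necessity} of \eqref{muc-condition}, I will test inequality \eqref{muc-inequality00} against the explicit one-parameter family $f_r = \chi_{(0,r)} \cdot (d\nu/dt)^{-1/(p-1)}$ for $r > 0$. A short computation gives $\|f_r\|_{L^p(\nu)}^p = \int_0^r (d\nu/dt)^{-1/(p-1)}\,dt$, and the primitive $\int_0^x f_r(t)\,dt$ equals this same integral for every $x \ge r$. Substituting into \eqref{muc-inequality00} and rearranging yields $B \le C$; routine limiting arguments dispose of the degenerate cases in which the integrals vanish or diverge.

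For the \emph{sufficiency}, set $\sigma(t) = (d\nu/dt)^{-1/(p-1)}$ and $\Sigma(x) = \int_0^x \sigma(t)\,dt$. H\"older's inequality with conjugate exponents $p$ and $p/(p-1)$, applied to the factorization $f = \bigl(f\,(d\nu/dt)^{1/p}\bigr)\,(d\nu/dt)^{-1/p}$, yields the pointwise bound
$$\left(\int_0^x f(t)\,dt\right)^p \le \Sigma(x)^{p-1}\int_0^x |f(t)|^p\,d\nu(t).$$
In the case $p = q$, I would raise this to the $q/p$-th power, integrate against $\mu$, swap the order of integration via Fubini, and use \eqref{muc-condition} together with an integration by parts on $\Sigma$ to bound the resulting inner integral by a multiple of $\Sigma(t)^{p-1}$. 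In the case $p < q$ this direct route is wasteful, and I would instead decompose $(0,\iy) = \bigcup_k E_k$ according to the level sets $E_k = \{x : 2^k \le \Sigma(x) < 2^{k+1}\}$, control each $\mu(E_k)$ through \eqref{muc-condition}, and sum a geometric series.

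The main obstacle is the case $p < q$: the pointwise H\"older estimate is too crude to yield the sharp $L^p(\nu) \to L^q(\mu)$ inequality in one step, so an additional combinatorial/stopping-time layer---the dyadic decomposition above, or equivalently a Calder\'on--Zygmund-style argument on the level sets of $\Sigma$---does the real work. Minor technicalities include measurability of $\sigma$ and the a.e.\ existence of $d\nu/dt$, both ensured by the $\s$-finiteness of $\nu$ (on the singular part of $\nu$ one may interpret $(d\nu/dt)^{-1/(p-1)}$ as zero).
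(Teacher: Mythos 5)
First, a point of comparison: the paper does not prove Theorem \ref{t:00} at all --- it is quoted as a known result from Muckenhoupt \cite{Mu} --- so your attempt can only be measured against the classical argument. Your necessity direction is exactly that argument and is fine: testing against $f_r=I_{(0,r)}\,w^{-1/(p-1)}$ (with $w$ the density of $\nu_{ac}$, the test function supported off the carrier of $\nu_s$, and a truncation $\min\{w^{-1/(p-1)},n\}$ when the integral diverges) gives $B\le C$.

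The sufficiency direction, however, has a genuine gap in the case $p=q$, which you claim goes through directly. The crude pointwise bound $\left(\int_0^x f\right)^p\le \Sigma(x)^{p-1}\int_0^x|f|^p\,d\nu$ followed by Fubini reduces matters to showing $\sup_t\int_{[t,\infty)}\Sigma(x)^{p-1}\,d\mu(x)<\infty$, and this is \emph{strictly stronger} than $B<\infty$. Concretely, take $p=q=2$, $\nu$ Lebesgue measure (so $\Sigma(x)=x$) and $d\mu=x^{-2}I_{(1,\infty)}(x)\,dx$: then $B=1$ and the Hardy-type inequality holds, yet $\int_{[t,\infty)}\Sigma(x)\,d\mu(x)=\int_{\max(t,1)}^{\infty}x^{-1}\,dx=\infty$ for every $t$. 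No integration by parts rescues this, because after substituting \eqref{muc-condition} one faces $\int_t^{\infty}\Sigma(s)^{-1}\sigma(s)\,ds$, which sits exactly at the logarithmically divergent exponent. Muckenhoupt's remedy is to insert a power of $\Sigma$ into the H\"older factorization, $f=\bigl(f\,w^{1/p}\,\Sigma^{1/(pp')}\bigr)\bigl(w^{-1/p}\,\Sigma^{-1/(pp')}\bigr)$ with $p'=p/(p-1)$: the second factor integrates exactly to $(p')^{1/p'}\Sigma(x)^{1/(p')^2}$, and after Fubini the inner integral becomes $\int_{[t,\infty)}\Sigma^{\alpha}\,d\mu$ with $\alpha=(p-1)^2/p<p-1$, for which the layer-cake computation together with \eqref{muc-condition} gives $\int_{[t,\infty)}\Sigma^{\alpha}\,d\mu\le C_{\alpha}B^{q}\,\Sigma(t)^{\alpha-(p-1)}$, and this precisely cancels the inserted weight $\Sigma(t)^{1/p'}$. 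This one computation, combined with Minkowski's integral inequality, in fact covers all $1<p\le q$, so the separate dyadic argument you propose for $p<q$ is unnecessary; if you do pursue it, beware that bounding $\sum_k\bigl(\int_0^{x_{k+1}}|f|^p\,d\nu\bigr)^{q/p}$ by $\|f\|_{L^p(\nu)}^{q}$ fails because those integrals overlap --- you must first split $\int_0^{x_{k+1}}f$ over the disjoint blocks $[x_{j-1},x_j]$ determined by $\Sigma(x_j)=2^{j}$ and apply H\"older to the resulting sum.
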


\medskip

\begin{remark}
In \eqref{muc-condition} we assume the usual convention $0\cdot \infty=0$.
Along the paper, every density and singular part of any measure is considered with respect to the Lebesgue measure.
Note that, in fact, Muckenhoupt inequality \eqref{muc-inequality00} must be satisfied for all measurable functions $f$ such that
$\left\|  \int_0^x f(t) \, dt \right\|_{L^q((0,\iy), \mu)}$ makes sense (although it can be infinite); we will follow this Muckenhoupt convention.
\end{remark}

The classical  Muckenhoupt inequality \eqref{muc-inequality00} (see \cite{Mu})
 appears in many contexts of mathematics, see
 for example \cite[p. 40]{M}, where we find an equivalent condition for the estimate that some measures
must hold, that is in connection with the condition for the classical $A_p$ weights, see for instance \cite{HKM}.
Note that \eqref{muc-inequality00} is also related with the classical Hardy inequality, which is also known as an
expression of the Heisenberg uncertainty principle, first formulated as a principle of quantum mechanics in 1927,
see \cite{heisenberg}. Later on it was studied by other authors with different perspectives, see for example the classical
paper by Fefferman, \cite{fefferman}.

\medskip

In harmonic analysis, estimates of different operators with respect to weights have been largely studied;
in the classical
book \cite{GRF} we find a general presentation of the theory. The estimates on $L^p$ with one weight are
known for operators like the Hardy-Littlewood maximal operator or the Hilbert transform, for which we need
the $A_p$ weights. One can also
find strong estimates with two weights where one is obtained from the other. But
although the $A_p$ condition is generalized for pairs of weights, even for the Hardy-Littlewood maximal operator it
is not enough to obtain the strong estimate on $L^p$ with two weights;
this is a very active problem now in harmonic analysis.

\medskip

The field of application of our new Muckenhoupt inequality will be
weighted Sobolev spaces, and, in particular, the multiplication
operator (MO) defined by  $M f(z)=z\,f(z)$.
 In  \cite{Ku} these spaces are studied in the context of
partial differential equations. Also in approximation theory they are of great interest.
We will focus on this last topic and its relationship with Sobolev orthogonal polynomials (SOP).

\medskip

SOP have been widely investigated in the last years.
In particular, in \cite{IKNS1,IKNS2}, the authors showed that
the expansions with SOP can avoid the Gibbs
phenomenon which appears with classical orthogonal series in $L^2$ (see also \cite{MF}).
In \cite{RARP1,RARP2,RARP3} it was developed a theory of general Sobolev spaces with respect to measures on
the real line, in order to apply it to the study of SOP.
See \cite{APRR} for the generalization of this theory to curves in the complex plane.

\medskip

Our interest in the MO arises from its relationship with SOP controlling their zeros.
In the theory of SOP
we don't have the usual three term recurrence relation for orthogonal polynomials in $L^2$
so, it is really difficult to find an explicit expression for the SOP of degree $n$.
Hence, one of the central problems in the study of these
polynomials is to determine its asymptotic behavior. In \cite{LP}
it was shown how to obtain the $n$th root asymptotic of SOP if the zeros of these polynomials are
contained in a compact set of the complex plane. Although the
uniform bound of the zeros of orthogonal polynomials holds for every
measure with compact support in the case without derivatives,
it is an open problem to bound the zeros of SOP with respect to the norm
\begin{equation} \label{eq:first}
\|f\|_{W^{N,p}(\mu_0,\mu_1,\dots,\mu_N)} :=
\left( \sum_{k=0}^N \left\|f^{(k)}\right\|^{p}_{L^p(\mu_k)}
 \right)^{1/p},
\end{equation}
where $\mu_0, \mu_1, \dots , \mu_N$ are Borel measures and $p=2$.
The boundedness of the zeros is a
consequence of the boundedness of the MO
in $\PP^{N,p}(\mu_0,\mu_1,\dots,\mu_N)$ (the completion of the linear space of polynomials $\PP$
with respect to the norm \eqref{eq:first}); in fact,
the zeros of the SOP are contained in the
disk $\{z:\, |z|\le 2\|M\|\}$ (see \cite[Theorem 2.1]{LP}).

\medskip

If $p\neq 2$, then we have an analogue of SOP, precisely, we say that $q_{n}(z) = z^{n} + a_{n-1}z^{n-1} + \cdots+
a_{1}z + a_{0}$ is an $n$th monic extremal polynomial with respect
to the norm in \eqref{eq:first} if
$$
\|q_{n}\|_{W^{N,p}(\mu_0,\mu_1,\dots,\mu_N)}=\inf\left\{ \|q\|_{W^{N,p}(\mu_0,\mu_1,\dots,\mu_N)}: q(z)= z^{n} + b_{n-1}z^{n-1} + \cdots+ b_{1}z + b_{0}, \quad b_{j}\in \RR\right\}\,.
$$
It is clear that there exists at least an $n$th monic extremal polynomial. Furthermore, it is unique
if $1 < p < \infty$. If $p = 2$, then the $n$th monic extremal polynomial
is precisely the $n$th monic SOP with respect to the inner product
corresponding to $W^{N,2}(\mu_0,\mu_1,\dots,\mu_N)$.
In \cite{LPP1} the authors prove also for $1 < p < \infty$ that
the boundedness of the MO allows us to obtain
the boundedness of the zeros and the asymptotic behavior of the
extremal polynomials.
It is possible to generalize these results also in the context of ``nondiagonal" Sobolev norms (see \cite{LPP1,PQRT2,PRT}).

\medskip

In \cite{APRR,RARP2,RARP3,R2,R4,RS},
there are some answers to the
question stated in \cite{LP} about appropriate  conditions for $M$ to be bounded:
the most general results on this topic appear in \cite[Theorem 4.1]{R2} and \cite[Theorem 8.1]{APRR};
they characterize in a simple way (in terms of equivalent norms
in Sobolev spaces) the boundedness of $M$
in $\PP^{N,p}(\mu_0,\mu_1,\dots,\mu_N)$.
The rest of the papers mention several conditions which guarantee the equivalence of norms
in Sobolev spaces, and consequently, the boundedness of $M$.
However, these works have two objections: on the one hand, they require that the measures lead us to obtain a well
defined Sobolev space
(note that $W^{1,p}(\mu_0, \mu_1)$ is not well defined if $(\mu_1)_s \neq 0 $, since when the distributional derivative is a locally integrable function,
it is defined up to sets with zero Lebesgue measure);
on the other hand, they obtain conditions which guarantee the boundedness of $M$ if it is defined in the Sobolev space
$W^{1,p}(\mu_0, \mu_1)$ instead of $\PP^{1,p}(\mu_0, \mu_1)$.
In this paper we avoid these two objections.

\

We recall now the two classical definitions of Sobolev space on a compact interval
$I \subset \RR$ (with respect to the Lebesgue measure):

$(1)$ The Sobolev space $W^{1,p}(I)$ is the set of functions $f \in L^{p}(I)$ whose distributional
derivative is also a function in $L^{p}(I)$.

$(2)$ The Sobolev space $\PP^{1,p}(I):=H^{1,p}(I)$ is the completion with respect to the Sobolev norm of
$W^{1,p}(I)$ of the linear space of polynomials $\PP$  (or $\mathcal{C}^{k}(I)$ with $k\in \mathbb{N}$,
$\mathcal{C}^{\infty}(I)$, H\"older spaces, etc.).

Note that by construction in $(2)$ the spaces $\mathbb{P}$, $\mathcal{C}^{k}(I)$, etc, are  dense
in $\PP^{1,p}(I)$.

\

In 1964 it was shown by Meyers and Serrin, see \cite{MS}, that $H=W$, i.e., the previous definitions of Sobolev
space (with respect to the Lebesgue measure) are equivalent (see also \cite{A} and the references therein). In 1984,
Kufner and Opic showed in \cite{KO} that the situation is not so simple when one considers weights instead of the  Lebesgue measure;
however, if the weights $w_j$ verify $w_j^{-1/(p-1)}\in L^{1}$, then they give the right definition following the
philosophy of definition $(1)$.

\

Following the work \cite{KO}, in \cite{RARP1,RARP2} it appears a definition of Sobolev space for a large class of measures instead of weights.
For general measures, it is not possible to define $W^{1,p}(I)$, but it is possible to define the Sobolev space as
the completion $\PP^{1,p}(I)$ of the linear space of polynomials $\PP$.
(Note that  it is always possible to define the completion of a normed space as the set of equivalence classes of Cauchy sequences, which generate a Banach space).
Although the following is a very simple definition of Sobolev space, we show with this
example the difficulties about it:

\

Let us consider $\|f\|_{W^{1,2}([0,1],\mu_0,\mu_1)}^2 := \int_0^1 |f|^2 + |f(0)|^2 + |f'(0)|^2$.
If we only work with polynomials or for example $\mathcal{C}^1$-functions, this is a well-defined norm;
however, it has no meaning for other general sets of functions.
In order to determine the completion $\PP^{1,2}([0,1],\mu_0,\mu_1)$ of the polynomials with the norm
$\|\cdot\|_{W^{1,2}([0,1],\mu_0,\mu_1)}$
 note that
any function  $f \in L^2((0,1))$ may be
approximated in this norm  by
functions $g \in C^1([0,1])$ with
the values of $g(0)$ and $g'(0)$ fixed beforehand.
Therefore, the completion of the polynomials with respect to this norm
is isomorphic to $L^2([0,1])
\times \RR^2$.
Observe that given a function $g$ in $\mathcal{C}^1([0,1])$, there are infinitely
many equivalence classes in $L^2([0,1]) \times \RR^2$ whose restrictions
to $L^2([0,1])$ coincide almost everywhere with $g$.
This Sobolev space is a very strange object and it shows some difficulties in our study,
because we do not require a ``good behavior" of $\mu_0$ and $\mu_1$.
However, this kind of Sobolev norms appears in the study of SOP,
and the results in this paper allow us to prove that the MO is bounded with respect to this norm.

\medskip

The case of one derivative $(N=1)$, is the most usual in applications and in
the theory of Sobolev spaces and SOP. In that case,
the operator $M$ is bounded in $\PP^{1,p}(\mu_0, \mu_1)$ if and only if
$$
\|f\|_{W^{1,p}(\mu_0,\mu_1)} \asymp \|f\|_{W^{1,p}(\mu_0 +\mu_1, \mu_1)}
$$
for all $f \in \PP$, where the simbol $A\asymp B$ means that there exist two positive constants, $k_1$ and $k_2$, such that
$k_1 A \le B \le k_2 A$.
This is equivalent to
\begin{equation} \label{maininequality00}
\|f\|_{L^p(\mu_1)} \le c \left( \|f\|_{L^{p}(\mu_0)} + \left\|f'\right\|_{L^{p}(\mu_1)}  \right)
\end{equation}
for every $f\in \mathbb{P}$ and some constant $c$.

\medskip

That is the main reason why we deal with three measures instead of two in inequality \eqref{maininequality0}.

\medskip

The paper contains four more sections. In section \ref{sec:prel-not} we establish some notation and preliminaries.
Section \ref{tercera} deals with the generalized Muckenhoupt inequality \eqref{eq:main};
Proposition \ref{p:first} provides a very simple sufficient condition.
Theorems \ref{t:iff}, \ref{t:iff2}, \ref{t:iff3} and \ref{t:niff}
give several different hypotheses for which this condition is also necessary.
We also prove in Theorem \ref{l:polMuckenhoupt} that for finite measures \eqref{eq:main} holds for every measurable function iff it
holds for every polynomial. A counter-example in which the Muckenhoupt inequality \eqref{eq:main} is not satisfied is shwon in section
 \ref{cuarta}. Finally, applying the theorems in Section \ref{tercera} we obtain several results in Section \ref{quinta} about the MO.
In particular, the sum of Theorem \ref{t:subinterval} and Corollary \ref{t:subinterval3} characterizes the
boundedness of the MO for a large class of measures which includes the most usual examples in the literature
of orthogonal polynomials (see Example \ref{exam1}).
Furthermore, Theorems \ref{t:subinterval2} and \ref{t:subinterval4} and Corollary \ref{t:subinterval5}
give sufficient conditions in order to obtain the boundedness of the MO for a wider class of measures
(see Example \ref{exam2}).
%%%%%%%%%%%%%%%%%%%%%%%%%%%%%%%%%%%%%%%%%%%%%%%%%%%%%%%%%%%%%%%%%%%%%%%%%%

\

\section{Notation and preliminaries}\label{sec:prel-not}

{\bf Notation.}
Along the paper we just consider nonnegative $\s$-finite Borel measures on $\mathbb{R}$. Besides:
\begin{itemize}
\item We assume that $1<p<\infty$ in  the whole work, so we omit it to simplify.
\item Measures are denoted by  $\nu_j$ or $\mu_j$, and densities (with respect to the Lebesgue measure) by $w_j$.
\item $(\nu )_s$ or $(\mu)_s$  denote singular parts,
and $(\nu)_{ac}$ or $(\mu)_{ac}$ absolutely continuous parts, with respect to the Lebesgue measure.
\item $(\nu_1-\nu_2)_+$ denotes the positive part of $\nu_1-\nu_2$.
\item Given a measurable set $A\subset \mathbb{R}$, we define the space of measurable functions
$$
\cM (A)=\{ f:A\to \mathbb{R}\, | \,  f\mbox{ is measurable on } A\}.
$$
\item For a measurable set $A\subset \mathbb{R}$, we denote by $I_A$ the characteristic function of $A$.
\item If $b\in \mathbb{R}$, $\delta_b$  denotes the Dirac delta measure concentrated at $\{b\}$.
\item For two finite measures $\mu_0, \mu_1$ on $[a,b]$,
we denote by $\PP^{1,p}([a,b],(\mu_0, \mu_1))$ or simply $\PP^{1,p}(\mu_0, \mu_1)$
the completion of the linear space of polynomials $\PP$ with respect to the Sobolev norm
$\|\cdot\|_{W^{1,p}([a,b],(\mu_0, \mu_1))}$.
\end{itemize}

\begin{remark}\label{rem:positive-part}
In general, $(\nu_1-\nu_2)_+$ makes sense if $\nu_1$ and $\nu_2$ are finite measures; however, it is possible to define
$(\nu_1-\nu_2)_+$ for $\s$-finite measures as follows.
Let us consider two increasing sequences of measurable sets $\{X_n^j\}$ with $\nu_j\left(X_n^j\right) < \infty$, $X^j:=\cup_n X_n^j$ and $\nu_j\left(\RR \setminus X^j\right) =0$
($j=1,2$).
Therefore, $\{X_n^1 \cap X_n^2\}$ is an increasing sequence of measurable sets with $\nu_j\left(X_n^1 \cap X_n^2\right) < \infty$ and $X^1\cap X^2=\cup_n \left(X_n^1 \cap X_n^2\right)$,
and it is possible to define the total variation $|\nu_1-\nu_2|$ of $\nu_1-\nu_2$, and its positive and negative parts $(\nu_1-\nu_2)_+$, $(\nu_1-\nu_2)_-$
as nonnegative $\s$-finite measures on $X^1 \cap X^2$.
Also, it is possible to define $|\nu_1-\nu_2|:=(\nu_1-\nu_2)_+:=\nu_1$ on $X^1 \setminus X^2$,
$|\nu_1-\nu_2|:=(\nu_1-\nu_2)_-:=\nu_2$ on $X^2 \setminus X^1$, and
$|\nu_1-\nu_2|:=0$ on $\RR \setminus \left(X_n^1 \cup X_n^2\right)$.
Hence, $|\nu_1-\nu_2|$, $(\nu_1-\nu_2)_+$ and $(\nu_1-\nu_2)_-$ are nonnegative $\s$-finite measures on $\RR$,
although $(\nu_1-\nu_2)(E)$ is not defined when $\nu_1(E)=\nu_2(E)=\infty$.
\end{remark}

\medskip

We want to generalize the Muckenhoupt condition \eqref{muc-condition} to the case
of three measures (with exponents $p=q$) and to fix our interest in an interval $[a,b]$
instead of $(0,\iy)$. In order to do this, we rewrite $B$ as follows.

\begin{definition}
Let $\nu_1, \nu_2$ be measures and $w_2= \dfrac {d\nu_2}{dx}$. We define:
$$
\begin{aligned}
\Lambda_{p,a}(\nu_1,\nu_2) & := \Lambda_{p,[a,b],a}(\nu_1,\nu_2) := \sup\limits_{r\in(a,b)} \nu_1\left([a,r]\right)
\left(\int_r^b w_2(t)^{-1/(p-1)} dt \right)^{p-1} ,
\\
\Lambda_{p,b}(\nu_1,\nu_2) & := \Lambda_{p,[a,b],b}(\nu_1,\nu_2) := \sup\limits_{r\in(a,b)} \nu_1\left([r,b]\right)
\left(\int_a^r w_2(t)^{-1/(p-1)} dt \right)^{p-1} ,
\\
\Lambda'_{p,b}(\nu_1,\nu_2) & := \Lambda'_{p,[a,b],b}(\nu_1,\nu_2) := \sup\limits_{r\in(a,b)} \nu_1\left([r,b)\right)
\left(\int_a^r w_2(t)^{-1/(p-1)} dt \right)^{p-1} .
\end{aligned}
$$
\end{definition}

\medskip

Note that Theorem \ref{t:00} in our setting, i.e., with  $(0,\infty)$ replaced by $(a,b)$ and $1<q=p<\infty$, can be read as follows.

\begin{theorem} \label{t:Muckenhoupt0}
Let $\nu_1, \nu_2$ be measures on $(a,b)$.
There exists a constant $C>0$ such that
\begin{equation}\label{muc-inequality}
\left\|  \dint_a^x f(t) \, dt \right\|_{L^p([a,b], \nu_1)}
\le C \|  f \|_{L^p([a,b], \nu_2)}
\end{equation}
holds for all $f \in \cM ([a,b])$
iff
$$
\Lambda'_{p,b}(\nu_1,\nu_2) <\iy\,.
$$
\end{theorem}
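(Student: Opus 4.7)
My plan is to reduce Theorem \ref{t:Muckenhoupt0} directly to the classical Muckenhoupt Theorem \ref{t:00} by a shift of the interval combined with an extension by zero, exploiting the fact that $\nu_1$ and $\nu_2$ are measures on the \emph{open} interval $(a,b)$ and so carry no mass at $\{a\}$ or $\{b\}$. Concretely, I would define measures on $(0,\iy)$ by $\tilde\nu_j(E):=\nu_j((E+a)\cap(a,b))$, each supported in $(0,b-a)$, and associate to any $f\in\cM([a,b])$ the function $\tilde f(t):=f(t+a)\,I_{(0,b-a)}(t)$ on $(0,\iy)$. Straightforward changes of variable give
\[
\|\tilde f\|_{L^p((0,\iy),\tilde\nu_2)}=\|f\|_{L^p([a,b],\nu_2)},\qquad \left\|\int_0^x\tilde f(t)\,dt\right\|_{L^p((0,\iy),\tilde\nu_1)}=\left\|\int_a^x f(t)\,dt\right\|_{L^p([a,b],\nu_1)},
\]
so \eqref{muc-inequality} on $[a,b]$ is equivalent to \eqref{muc-inequality00} on $(0,\iy)$ restricted to functions of the form $\tilde f$. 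A splitting $g=g\,I_{(0,b-a)}+g\,I_{[b-a,\iy)}$ upgrades this to \emph{all} measurable $g$ on $(0,\iy)$: the second summand contributes nothing to either side, since $\tilde\nu_j$ is supported in $(0,b-a)$ and $\int_0^x g\,I_{[b-a,\iy)}(t)\,dt=0$ for $x\le b-a$.

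Next I would compute the Muckenhoupt constant \eqref{muc-condition} with $q=p$ for the pair $(\tilde\nu_1,\tilde\nu_2)$. Because $\tilde\nu_1$ lives on $(0,b-a)$, one has $\tilde\nu_1([r,\iy))=0$ whenever $r\ge b-a$, so by the convention $0\cdot\iy=0$ only $r\in(0,b-a)$ contributes to the supremum. For such $r$,
\[
\tilde\nu_1([r,\iy))=\nu_1([r+a,b)),\qquad \int_0^r\bigl(d\tilde\nu_2/dt\bigr)^{-1/(p-1)}dt=\int_a^{r+a}w_2(s)^{-1/(p-1)}ds,
\]
and the reparametrization $s=r+a\in(a,b)$ identifies the $p$-th power of the classical constant exactly with $\Lambda'_{p,b}(\nu_1,\nu_2)$. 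Combining with the previous paragraph and invoking Theorem \ref{t:00} yields: the stated inequality holds for every $f\in\cM([a,b])$ if and only if $\Lambda'_{p,b}(\nu_1,\nu_2)<\iy$.

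The only delicate point in this argument is why the half-open interval $[r,b)$, and not the closed $[r,b]$, appears in the definition of $\Lambda'_{p,b}$: it is precisely because the correspondence $\tilde\nu_1([r,\iy))=\nu_1([r+a,b))$ records all $\nu_1$-mass in $[r+a,b)$, and the point $b$ lies outside the open interval $(a,b)$ on which $\nu_1$ is defined. No ad hoc necessity argument (such as testing on $f=w_2^{-1/(p-1)}I_{[a,r]}$) is needed, as the full equivalence is inherited from Theorem \ref{t:00} through the translation. The main bookkeeping hurdle is verifying that neither the change of variable nor the extension by zero affects the singular parts of the measures on either side, but this follows immediately from the supports of $\tilde\nu_1,\tilde\nu_2$ being contained in $(0,b-a)$.
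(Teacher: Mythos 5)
Your proof is correct and takes essentially the route the paper intends: the paper offers no separate argument for Theorem \ref{t:Muckenhoupt0}, presenting it as a direct rereading of the classical Theorem \ref{t:00} with $(0,\infty)$ replaced by $(a,b)$ and $q=p$, and your translation-plus-extension-by-zero reduction (together with the observation that $\tilde\nu_1([r,\infty))=0$ for $r\ge b-a$ kills the spurious part of the supremum, and that the endpoint correspondence produces $\nu_1([r,b))$ rather than $\nu_1([r,b])$) is precisely the verification of that reduction. No gaps.
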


\medskip

In order to apply our results to SOP, we need to deal with measures on the compact interval $[a,b]$.
Hence, we need the following version of Theorem \ref{t:Muckenhoupt0} for compact intervals.

\medskip

\begin{theorem} \label{t:Muckenhoupt}
Let $\nu_1, \nu_2$ be measures on $[a,b]$.
There exists a constant $C>0$ such that
$$
\left\|  \dint_a^x f(t) \, dt \right\|_{L^p([a,b], \nu_1)}
\le C \left\|  f \right\|_{L^p([a,b], \nu_2)}
$$
holds for all $f \in \cM ([a,b])$ iff
$$
\Lambda_{p,b}(\nu_1,\nu_2) <\iy\,.
$$
\end{theorem}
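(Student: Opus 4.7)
My plan is to reduce Theorem~\ref{t:Muckenhoupt} to Theorem~\ref{t:Muckenhoupt0} (which is stated only for measures on the open interval $(a,b)$) by isolating the possible atom of $\nu_1$ at the endpoint $b$. Setting $F(x):=\int_a^x f(t)\,dt$, the fact $F(a)=0$ makes any atom of $\nu_1$ at $a$ inert; so if $\tilde\nu_1$ denotes the restriction of $\nu_1$ to $(a,b)$, one has the clean split
$$
\|F\|_{L^p([a,b],\nu_1)}^p \;=\; \|F\|_{L^p((a,b),\tilde\nu_1)}^p \;+\; \nu_1(\{b\})\,|F(b)|^p.
$$
The task then decomposes into controlling the first summand through Theorem~\ref{t:Muckenhoupt0} applied to $\tilde\nu_1,\nu_2$ on $(a,b)$, and handling the atom at $b$ separately.

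For sufficiency, the inequality $\tilde\nu_1([r,b))=\nu_1([r,b))\le\nu_1([r,b])$ for $r\in(a,b)$ yields $\Lambda'_{p,b}(\tilde\nu_1,\nu_2)\le\Lambda_{p,b}(\nu_1,\nu_2)<\infty$, so Theorem~\ref{t:Muckenhoupt0} supplies the required bound on the first summand. For the atom, Hölder's inequality gives
$$
|F(b)|^p \;\le\; \Bigl(\int_a^b |f|\,dx\Bigr)^p \;\le\; \|f\|_{L^p(\nu_2)}^p \Bigl(\int_a^b w_2^{-1/(p-1)}\,dx\Bigr)^{p-1},
$$
and monotone convergence as $r\to b^-$ shows $\nu_1(\{b\})\bigl(\int_a^b w_2^{-1/(p-1)}\,dx\bigr)^{p-1}\le\Lambda_{p,b}(\nu_1,\nu_2)$, closing the estimate.

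For necessity, Theorem~\ref{t:Muckenhoupt0} applied to $\tilde\nu_1$ (using $\|\cdot\|_{L^p(\tilde\nu_1)}\le\|\cdot\|_{L^p(\nu_1)}$) immediately supplies $\Lambda'_{p,b}(\tilde\nu_1,\nu_2)<\infty$. To capture the atom at $b$ I plan to test with the Muckenhoupt extremizer $f_r(t)=w_2(t)^{-1/(p-1)} I_{[a,r]}(t)$ (chosen to vanish on $\{w_2=0\}$) for $r\in(a,b)$: this produces $F(b)=\int_a^r w_2^{-1/(p-1)}\,dt$ on one side and $\|f_r\|_{L^p(\nu_2)}^p=\int_a^r w_2^{-1/(p-1)}\,dt$ on the other, giving $\nu_1(\{b\})\bigl(\int_a^r w_2^{-1/(p-1)}\,dt\bigr)^{p-1}\le C^p$; letting $r\to b^-$ and combining with the $\tilde\nu_1$ bound delivers $\Lambda_{p,b}(\nu_1,\nu_2)<\infty$.

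The step that requires most care is the bookkeeping for degenerate situations through the convention $0\cdot\infty=0$: if $\int_a^b w_2^{-1/(p-1)}\,dt=\infty$, finiteness of $\Lambda_{p,b}(\nu_1,\nu_2)$ forces $\nu_1(\{b\})=0$, and if $\int_a^r w_2^{-1/(p-1)}\,dt=\infty$ already for some $r<b$ it forces $\nu_1|_{[r,b]}$ to vanish; in each such case the decomposition collapses to a trivial statement. Similarly, the singular part $(\nu_2)_s$, being supported on a Lebesgue-null set, is neutralized by redefining $f_r$ on that null set and hence never obstructs the test-function computation.
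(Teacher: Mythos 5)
Your overall strategy coincides with the paper's: split off the atom of $\nu_1$ at $b$, handle the remaining part through Theorem \ref{t:Muckenhoupt0}, use H\"older's inequality for the atom in the sufficiency direction and test functions in the necessity direction. The sufficiency half is correct. The gap is in the necessity half, precisely in the degenerate case you relegate to ``bookkeeping''. Suppose the Muckenhoupt inequality holds, $\nu_1(\{b\})>0$, and $\int_a^{r_0} w_2^{-1/(p-1)}\,dt=\infty$ for some $r_0\in(a,b)$ (this can happen even with $w_2>0$ a.e., e.g.\ when $w_2$ decays fast at an interior point, so your convention of killing $f_r$ on $\{w_2=0\}$ does not dispose of it). Then your test function $f_{r_0}$ satisfies $\|f_{r_0}\|_{L^p(\nu_2)}=\infty$, so the hypothesis gives only the vacuous bound $\infty\le C\cdot\infty$; yet $\Lambda_{p,b}(\nu_1,\nu_2)\ge \nu_1([r_0,b])\bigl(\int_a^{r_0} w_2^{-1/(p-1)}\bigr)^{p-1}=\infty$ unless $\nu_1([r_0,b])=0$. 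The bound $\Lambda'_{p,b}(\tilde\nu_1,\nu_2)<\infty$ coming from Theorem \ref{t:Muckenhoupt0} only forces $\nu_1([r_0,b))=0$, not $\nu_1(\{b\})=0$, and your closing remark (``finiteness of $\Lambda_{p,b}$ forces \dots'') resolves such cases only under the hypothesis of the \emph{sufficiency} direction, which is not available here. So you still owe a proof that the inequality together with $\nu_1(\{b\})>0$ forces $\int_a^b w_2^{-1/(p-1)}\,dt<\infty$.

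The repair is exactly what the paper does: truncate the weight from below instead of restricting the support. Testing with $f_\e:=\max\{w_2,\e\}^{-1/(p-1)}I_{[a,b]\setminus S}$ (with $S$ a Lebesgue-null set carrying $(\nu_2)_s$) gives $\|f_\e\|_{L^p(\nu_2)}^p\le\int_a^b\max\{w_2,\e\}^{-1/(p-1)}\le\e^{-1/(p-1)}(b-a)<\infty$ for every $\e>0$, so the atom inequality yields $\bigl(\int_a^b\max\{w_2,\e\}^{-1/(p-1)}\bigr)^{(p-1)/p}\le C'$ uniformly in $\e$, and monotone convergence as $\e\to 0$ gives $\int_a^b w_2^{-1/(p-1)}\,dt<\infty$. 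This simultaneously rules out the degenerate case and supplies the atom estimate; with this replacement your argument closes and is essentially identical to the paper's proof.
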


\medskip

\begin{remark}
A similar result holds replacing $b$ by $a$.
Along the paper, most of the results will be stated just for one endpoint of the interval,
but they also hold for the other one by symmetry.
\end{remark}

\medskip

\begin{proof}[Proof of Theorem \ref{t:Muckenhoupt}]
Fix any measurable subset $S$ of $[a,b]$ with
zero Lebesgue measure and such that $(\nu_2)_s|_S=(\nu_2)_s$.

First of all, note that the singular part of $\nu_2$ does not play any role in $\Lambda_{p,b}(\nu_1,\nu_2)$.
For $f\in\mathcal{M}([a,b])$ we define the function $f_0:=f I_{[a,b]\setminus S}$. Then we have
$$
\left\|  \dint_a^x f_0(t) \, dt \right\|_{L^p([a,b], \nu_1)}
= \left\|  \dint_a^x f(t) \, dt \right\|_{L^p([a,b], \nu_1)}\,,
\qquad
\left\|  f_0 \right\|_{L^p([a,b], \nu_2)}= \left\|  f \right\|_{L^p([a,b], (\nu_2)_{ac})}\,,
$$
and we conclude that the singular part of $\nu_2$ does not play any role in \eqref{muc-inequality}.
Furthermore, if $F(x):=\int_a^x f(t)\, dt$, it verifies $F(a)=0$.
Hence, if $\nu_1$ is a measure on $[a,b)$ and $\nu_2$ is a measure on $[a,b]$, then Theorem \ref{t:Muckenhoupt0}
proves that
\eqref{muc-inequality} holds iff $\Lambda'_{p,b}(\nu_1,\nu_2) <\iy$.

Now, let's observe that
\begin{align}\label{comp-Lambdas}
\max\Big\{\Lambda'_{p,b}(\nu_1,\nu_2), & \ \nu_1(\{b\}) \Big(\int_a^b w_2(t)^{-1/(p-1)} dt \Big)^{p-1} \Big\}
\\
& \le \Lambda_{p,b}(\nu_1,\nu_2) \le
\Lambda'_{p,b}(\nu_1,\nu_2) + \nu_1(\{b\}) \Big(\int_a^b w_2(t)^{-1/(p-1)} dt \Big)^{p-1} \,. \notag
\end{align}
Therefore, if $\nu_1(\{b\})=0$ then $\Lambda_{p,b}(\nu_1,\nu_2)=\Lambda'_{p,b}(\nu_1,\nu_2)$ and we are done. So, let us suppose that
$\nu_1(\{b\})>0$.

\smallskip

If $\Lambda_{p,b}(\nu_1,\nu_2)<\infty$ then also
$\Lambda_{p,b}(\nu_1-\nu_1(\{b\})\d_b,\nu_2)=\Lambda'_{p,b}(\nu_1,\nu_2)<\infty$ and by Theorem \ref{t:Muckenhoupt0}
there exists a constant $C$ such that
$$
\left\| \int_a^x f(t)\, dt \right\|_{L^p([a,b],\nu_1-\nu_1(\{b\})\delta_{b})} \le C\, \left\|f\right\|_{L^p([a,b],\nu_2)} \,,
\qquad
\forall f \in \cM ([a,b])\,.
$$
Hence, in order to obtain (\ref{muc-inequality})
it suffices to prove that there exists a constant $C_1$ such that

\noindent $\|\int_a^x f\|_{L^p(\{b\},\,\delta_b)}\le C_1\,\|f\|_{L^p([a,b],\nu_2)}$ for all $f \in \cM ([a,b])$ or, equivalently, that
$$
\left|\int_a^b f(t)\, dt \right| \le C_2\,\left\|f\right\|_{L^p([a,b],\nu_2)}\, ,\qquad
\forall f \in \cM ([a,b])\,.
$$
By  H\"older inequality,
$$
\left|\int_a^b f(t)\, dt \right| = \left|\int_a^b f(t)w_2(t)^{1/p}w_2(t)^{-1/p}\, dt \right| \le
\left\|f\right\|_{L^p([a,b],\nu_2)} \left\|w_2^{-1/(p-1)}\right\|^{(p-1)/p}_{L^{1}([a,b])} \,, \quad \forall f \in \cM ([a,b])\,.
$$
Since $\nu_1(\{b\})>0$ and $\Lambda_{p,b}(\nu_1,\nu_2)<\infty$ imply that $\left\|w_2^{-1/(p-1)}\right\|_{L^{1}([a,b])}<\infty$
(see (\ref{comp-Lambdas})), we deduce (\ref{muc-inequality}).

Assume now that  (\ref{muc-inequality}) holds then, in particular,
$$
\left\| \int_a^x f(t)\, dt \right\|_{L^p([a,b],\nu_1-\nu_1(\{b\})\delta_{b})} \le C\, \left\|f\right\|_{L^p([a,b],\nu_2)},\qquad
\forall f \in \cM ([a,b])\,.
$$
Using again Theorem \ref{t:Muckenhoupt0}, we deduce that $\Lambda'_{p,b}(\nu_1,\nu_2)\allowbreak<\infty$.
Also, from (\ref{muc-inequality}), we have that

\noindent
$\nu_1(\{b\})^{1/p} \left| \int_a^b f(t)\, dt \right| \le C\, \left\|f\right\|_{L^p([a,b],\nu_2)}$ and therefore
$$
\left| \int_a^b f(t)\, dt \right| \le C'\, \left\|f\right\|_{L^p([a,b],\nu_2)},\qquad
\forall f \in \cM ([a,b])\,.
$$
 In particular, if we define $f_\e= \max\{w_2,\e\}^{-1/(p-1)} I_{[a,b]\setminus S}$, for $\e>0$, we obtain
$$
\int_a^b f_\e(t)\, dt \le C' \left(\int_a^b \max\{w_2(t),\e\}^{-p/(p-1)} \, w_2(t)\, dt\right)^{1/p} \le C'
\left(\int_a^b \max\{w_2(t),\e\}^{-1/(p-1)} dt\right)^{1/p} <\infty\,.
$$
Therefore,
$$
\left(\int_a^b \max\{w_2(t),\e\}^{-1/(p-1)} dt\right)^{(p-1)/p} \le C' .
$$
By the monotone convergence Theorem,  $\int_a^b w_2(t)^{-1/(p-1)} dt <\infty$ and, by (\ref{comp-Lambdas}),  $\Lambda_{p,b}(\nu_1,\nu_2)<\infty$.
\end{proof}
%%%%%%%%%%%%%%%%%%%%%%%%%%%%%%%%%%%%%%%%%%%%%%%%%%%%%%%%%%%%%%%%%%%%%%%%%%%%%%

\medskip

\section{Muckenhoupt inequality with three measures}\label{tercera}

Let us start with a first approach to our problem, which gives a sufficient condition for \eqref{maininequality0}.
We will prove in Sections 3 and 4 that, in many situations, this condition is also necessary.

\medskip

\begin{proposition}\label{p:first}
Let  $\nu_1, \nu_2,\nu_3$ be measures on $[a,b]$.
Assume that
\begin{equation}\label{eq:mc}
\Lambda_{p,b} \left( (\nu_1-k\nu_2)_+, \nu_3\right) < \iy \,,
\end{equation}
for some constant $k\ge 0$.
Then there exists a constant $C$ such that
%$f\in W^{1,p}([a,b],(\nu_1+\nu_2, \nu_3))$ we have
\begin{equation} \label{eq:main}
\left\|\dint_a^x f(t) \, dt \right\|_{L^p([a,b], \nu_1)}
\le c \left( \left\|\dint_a^x f(t) \, dt \right\|_{L^{p}([a,b],\nu_2)} + \left\|f \right\|_{L^{p}([a,b], \nu_3)}  \right), \quad\forall\,
f\in\cM ([a,b]).
\end{equation}
\end{proposition}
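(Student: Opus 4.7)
The strategy is to decompose the measure $\nu_1$ into a part dominated by $\nu_2$ and a part controlled via Theorem \ref{t:Muckenhoupt}. Specifically, by the very definition of the positive part (see Remark \ref{rem:positive-part}), the inequality of measures
$$
\nu_1 \le k\nu_2 + (\nu_1-k\nu_2)_+
$$
holds on $[a,b]$, since for every Borel set $E$ one has $\nu_1(E)-k\nu_2(E)\le (\nu_1-k\nu_2)_+(E)$ (with the usual conventions when the difference is not defined). Consequently, for any nonnegative measurable function $G$,
$$
\int_a^b G\, d\nu_1 \le k \int_a^b G\, d\nu_2 + \int_a^b G\, d(\nu_1-k\nu_2)_+ .
$$

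First I would set $F(x):=\int_a^x f(t)\,dt$ and apply this pointwise inequality with $G=|F|^p$, obtaining
$$
\|F\|_{L^p([a,b],\nu_1)}^{p}
\le k \,\|F\|_{L^p([a,b],\nu_2)}^{p}
+ \|F\|_{L^p([a,b],(\nu_1-k\nu_2)_+)}^{p}.
$$
The first summand on the right is exactly $k$ times the first term in \eqref{eq:main} raised to the $p$-th power, so it requires no further work.

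Next, I would handle the second summand via Theorem \ref{t:Muckenhoupt} applied to the pair of measures $(\nu_1-k\nu_2)_+$ and $\nu_3$. The hypothesis \eqref{eq:mc} states precisely that $\Lambda_{p,b}\bigl((\nu_1-k\nu_2)_+,\nu_3\bigr)<\infty$, which is the condition required by Theorem \ref{t:Muckenhoupt} to guarantee the existence of a constant $C_0$ with
$$
\|F\|_{L^p([a,b],(\nu_1-k\nu_2)_+)}
= \left\|\dint_a^x f(t)\,dt\right\|_{L^p([a,b],(\nu_1-k\nu_2)_+)}
\le C_0 \, \|f\|_{L^p([a,b],\nu_3)}.
$$

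Combining the two estimates and using the elementary bound $(A^p+B^p)^{1/p}\le A+B$ for $A,B\ge0$, we obtain
$$
\|F\|_{L^p([a,b],\nu_1)}
\le k^{1/p}\,\|F\|_{L^p([a,b],\nu_2)} + C_0 \, \|f\|_{L^p([a,b],\nu_3)},
$$
which is exactly \eqref{eq:main} with $c:=\max\{k^{1/p},C_0\}$. I do not expect any serious obstacle: the only delicate point is making sure that the measure inequality $\nu_1\le k\nu_2+(\nu_1-k\nu_2)_+$ is used correctly in the $\s$-finite setting described in Remark \ref{rem:positive-part}, but this reduces to standard measure-theoretic bookkeeping on the sets where $\nu_2$ is finite and, separately, where it is infinite.
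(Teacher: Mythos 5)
Your proposal is correct and follows essentially the same route as the paper: bound the $\nu_1$-integral of $\left|\int_a^x f\right|^p$ by the $k\nu_2$- and $(\nu_1-k\nu_2)_+$-integrals, then control the latter by $\|f\|_{L^p(\nu_3)}$ via Theorem \ref{t:Muckenhoupt} using hypothesis \eqref{eq:mc}. The only (harmless) difference is that you state the measure inequality in the additive form $\nu_1\le k\nu_2+(\nu_1-k\nu_2)_+$, which avoids the subtraction of possibly infinite quantities that forces the paper to first reduce to $\nu_2$-integrable functions.
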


\medskip

The hypothesis \eqref{eq:mc} includes the two known cases: when $\nu_1 \le
k \nu_2$, and when the Muckenhoupt condition is fulfilled for
$\nu_1$ and $\nu_3$, i.e., $ \Lambda_{p,b}(\nu_1, \nu_3) < \iy. $

\medskip

\begin{proof}
First of all, we have
$$
\nu_1(E)-\nu_2(E)=(\nu_1-\nu_2)(E)=(\nu_1-\nu_2)_+(E)-(\nu_1-\nu_2)_-(E) \le (\nu_1-\nu_2)_+(E)
$$
for every measurable set $E$ with $\nu_2(E) < \infty$.
Hence,
\begin{equation} \label{eq:g}
\int_a^b g(x) \, d\nu_1 - k \int_a^b g(x) \, d\nu_2
\le \int_a^b g(x) \, d(\nu_1-k\nu_2)_+
\end{equation}
for every simple $\nu_2$-integrable function $g$.
If $g$ is now a nonnegative $\nu_2$-integrable function and $0<\a<\b$, then $\nu_2\left(g^{-1}([\a,\b))\right) < \infty$,
and we deduce \eqref{eq:g}
by approximating $g$ by simple $\nu_2$-integrable functions increasing to $g$.

Without loss of generality we can assume that $\int_a^x f(t) \, dt \in L^{p}([a,b],\nu_2)$,
since otherwise the inequality holds.
Therefore, we deduce from \eqref{eq:g} for $g(x)=\left| \int_a^x f(t) \, dt\right|^p$,
$\Lambda_{p,b} \left( (\nu_1-k\nu_2)_+, \nu_3\right) < \iy$ and by Theorem \ref{t:Muckenhoupt}
that there exists a constant $c_0$ such that
$$
\left\|\dint_a^x f(t) \, dt \right\|_{L^p([a,b], \nu_1)}^p - k
 \left\|\dint_a^x f(t) \, dt \right\|_{L^{p}([a,b],\nu_2)}^p
\le \left\|\dint_a^x f(t) \, dt \right\|_{L^p([a,b], (\nu_1-k\nu_2)_+)}^p
\le c_0 \left\|f \right\|_{L^{p}([a,b], \nu_3)}^p ,
%\right\|f\right\|_{L^p(\nu_1)}^p - c \right\|f\right\|_{L^{p}(\nu_2)}^p \le \right\|f\right\|^p_{L^p( (\nu_1-c\nu_2)_+)}
% \le c \right\|f'\right\|^p_{L^p (\nu_3)}.
$$
for all $ f\in\cM ([a,b])$ with $\int_a^x f(t) \, dt \in L^{p}([a,b],\nu_2)$.
Then, taking $C:=\max\{k,c_0\}^{1/p}$ we conclude the proof.
\end{proof}

\medskip

In terms of Sobolev spaces this estimate can be read as:
$$
\|g - g(a)\|_{L^p(\nu_1)}
\le C \left(\|g - g(a)\|_{L^{p}(\nu_2)} + \|g'\|_{L^p( \nu_3)} \right) ,
$$
where $g\in W^{1,p}([a,b],(\nu_1+\nu_2, \nu_3))$, if this Sobolev space is well defined.

\medskip
In order to deal with the weights that one usually finds in applications, and to obtain a characterization of \eqref{eq:main} in terms of them, we will use the following definition.

\medskip

\begin{definition}
We say that an ordered pair of weights $(w_1,w_2)$ on $[a,b]$ is in the class
$\mathfrak{C}_a([a,b])$ if we have either
$$
\lim\limits_{x\to a^+} \dfrac{w_1(x)}{w_2(x)} =\iy
\qquad
or
\qquad
\limsup\limits_{x\to a^+} \dfrac{w_1(x)}{w_2(x)} <\iy \,.
$$
Similarly, $(w_1,w_2) \in \mathfrak{C}_b([a,b])$ if we have either
$$
\lim\limits_{x\to b^-} \dfrac{w_1(x)}{w_2(x)} =\iy
\qquad
or
\qquad
\limsup\limits_{x\to b^-} \dfrac{w_1(x)}{w_2(x)} <\iy \,.
$$
\end{definition}

\medskip

Note that if, for example, there exists the limit
$$
\lim\limits_{x\to a^+} \dfrac{w_1(x)}{w_2(x)} =L \in[0,\iy] \,,
$$
then $(w_1,w_2)\in\mathfrak{C}_a([a,b])$.

\medskip

\begin{remark}
The class $\mathfrak{C}_a([a,b])$ contains every pair of weights
obtained by the products of:
$$
|x-a|^{\a_1}\,, \quad \exp \left(-\beta|x-a|^{-\a_2}\right)\,,
\quad \left|\log \dfrac 1{|x-a|} \right|^{\a_3}\,, \quad
\left|\log \left|\log \left| \cdots \left|\log \dfrac 1{|x-a|}\right| \cdots \right| \right| \right|^{\a_4}\,,
$$
with $\a_j \in \RR$ and $\b \ge 0$.
\end{remark}

\medskip

\begin{theorem} \label{t:iff}
Let  $\nu_1, \nu_2, \nu_3$ be measures on $[a,b]$.
Assume that
$\nu_1$ is finite, $w_3^{-1/(p-1)} \in L^{1}([a,r])$ for every $r\in (a,b)$,
and $(w_1, w_2)\in\mathfrak{C}_b([a,b])$.
Suppose  also that there exist constants $\e_0,c_0>0$ verifying the following:
$(\nu_2)_s([b-\e_0,b])=0$ if $\dyle\lim_{x\to b^-} w_1(x)/w_2(x) = \iy$,
and $(\nu_1)_s \le c_0(\nu_2)_s$ on $[b-\e_0,b]$ if $\dyle\limsup_{x\to b^-} w_1(x)/w_2(x) < \iy$.
Then, there exists a constant $c$ such that
\begin{equation} \label{eq:iff}
\left\|\dint_a^x f(t) \, dt \right\|_{L^p([a,b], \nu_1)}
\le  c  \left(\left\|\dint_a^x f(t) \, dt \right\|_{L^{p}([a,b],\nu_2)}
+  \,\left\|f \right\|_{L^{p}([a,b], \nu_3)}\right),
\quad \forall\, f\in\cM ([a,b])
\end{equation}
iff
there exists a constant $k\ge 0$ such that
\begin{equation} \label{eq:iff2}
\Lambda_{p,b} \left( (\nu_1-k\nu_2)_+, \nu_3\right) < \iy \,.
\end{equation}
\end{theorem}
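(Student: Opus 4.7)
The implication $\eqref{eq:iff2}\Rightarrow\eqref{eq:iff}$ is exactly Proposition \ref{p:first}; the plan is therefore to produce, starting from \eqref{eq:iff}, a constant $k\ge 0$ for which $\Lambda_{p,b}((\nu_1-k\nu_2)_+,\nu_3)<\iy$. I would split according to the dichotomy in the definition of $\mathfrak{C}_b([a,b])$ and treat the two alternatives separately; the first case turns out to be hypothesis-only, while all the work sits in the second.

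\textbf{Case A} ($K_0:=\limsup_{x\to b^-} w_1/w_2<\iy$). Here I claim \eqref{eq:iff2} is a consequence of the hypotheses alone, independently of \eqref{eq:iff}. Picking any $M>K_0$ one has $w_1\le Mw_2$ a.e.\ on $[b-\e,b]$ for some $\e\in(0,\e_0]$, hence $(\nu_1)_{ac}\le M(\nu_2)_{ac}$ there; combined with the standing hypothesis $(\nu_1)_s\le c_0(\nu_2)_s$ on $[b-\e_0,b]$ this yields $\nu_1\le k\nu_2$ on $[b-\e,b]$ with $k:=\max(M,c_0)$. Consequently $(\nu_1-k\nu_2)_+$ is supported in $[a,b-\e]$, on which $\nu_1$ is finite and $w_3^{-1/(p-1)}$ is integrable, so the supremum defining $\Lambda_{p,b}((\nu_1-k\nu_2)_+,\nu_3)$ is finite.

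\textbf{Case B} ($\lim_{x\to b^-} w_1/w_2=\iy$ and $(\nu_2)_s([b-\e_0,b])=0$). Here I would show the choice $k=0$ works, i.e.\ $\Lambda_{p,b}(\nu_1,\nu_3)<\iy$. For any $\e>0$ one can find $\eta\in(0,\e_0]$ with $w_2\le\e w_1$ on $[b-\eta,b]$; since $(\nu_2)_s$ vanishes on this interval, this upgrades to the measure inequality $\nu_2|_{[b-\eta,b]}\le\e\,\nu_1|_{[b-\eta,b]}$. For any $f\in\cM([a,b])$ supported in $[b-\eta,b]$, $F(x):=\int_a^x f$ vanishes on $[a,b-\eta]$, and therefore $\|F\|_{L^p(\nu_2)}^p\le\e\|F\|_{L^p(\nu_1)}^p$. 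Substituting into \eqref{eq:iff} gives $\|F\|_{L^p(\nu_1)}\le c\e^{1/p}\|F\|_{L^p(\nu_1)}+c\|f\|_{L^p(\nu_3)}$, and choosing $\e$ small enough that $c\e^{1/p}\le 1/2$ absorbs the first term, leaving $\|F\|_{L^p(\nu_1)}\le 2c\|f\|_{L^p(\nu_3)}$ for every such $f$.

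Theorem \ref{t:Muckenhoupt} applied on $[b-\eta,b]$ then yields $\sup_{r\in(b-\eta,b)}\nu_1([r,b])\bigl(\int_{b-\eta}^r w_3^{-1/(p-1)}\bigr)^{p-1}<\iy$. To promote this to $\Lambda_{p,[a,b],b}(\nu_1,\nu_3)<\iy$, for $r\in(a,b-\eta]$ I would use $\nu_1([r,b])\le\nu_1([a,b])<\iy$ and $\int_a^r w_3^{-1/(p-1)}\le\int_a^{b-\eta} w_3^{-1/(p-1)}<\iy$ directly; for $r\in(b-\eta,b)$ I would split $\int_a^r=\int_a^{b-\eta}+\int_{b-\eta}^r$ and apply $(s+t)^{p-1}\le 2^{p-1}(s^{p-1}+t^{p-1})$ so that the already-bounded quantity near $b$, together with the finiteness of $\nu_1$, controls the whole supremum. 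The main obstacle is exactly the absorption step in Case B: the pointwise density ratio $w_1/w_2\to\iy$ and the vanishing of $(\nu_2)_s$ near $b$ must be used jointly, since a single singular atom of $\nu_2$ arbitrarily close to $b$ would destroy the needed measure inequality $\nu_2\le\e\nu_1$. This is precisely what the assumption on $\mathfrak{C}_b$ together with the singular-part hypothesis encodes, and it is what makes those hypotheses essentially sharp.
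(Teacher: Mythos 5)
Your proposal is correct and takes essentially the same route as the paper: Proposition \ref{p:first} gives sufficiency, and for necessity you use the same dichotomy on $\mathfrak{C}_b([a,b])$, with the $\limsup<\iy$ case settled from the hypotheses alone (yielding $\nu_1\le k\nu_2$ near $b$) and the $\lim=\iy$ case settled by absorbing the $\nu_2$-term for $f$ supported near $b$ and then applying Theorem \ref{t:Muckenhoupt}. The only cosmetic difference is that your final step promoting the local condition on $[b-\eta,b]$ to $\Lambda_{p,[a,b],b}(\nu_1,\nu_3)<\iy$ reproves inline what the paper isolates as Lemma \ref{General Lemma}.
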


\medskip

The following result will be useful in the proof of Theorem \ref{t:iff}.

\begin{lemma} \label{General Lemma}
Let  $\nu_1, \nu_2$ be measures on $[a,b]$.
Assume that $\nu_1$ is finite and $w_2^{-1/(p-1)} \in L^{1}([a,r_0])$ for some $r_0\in (a,b)$.
Then,
$$
\Lambda_{p,[a,b],b}(\nu_1, \nu_2) <\iy
\quad
\Longleftrightarrow
\quad
\Lambda_{p,[r_0,b],b}(\nu_1, \nu_2) < \iy \,.
$$
\end{lemma}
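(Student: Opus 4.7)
The plan is to split the supremum defining $\Lambda_{p,[a,b],b}(\nu_1,\nu_2)$ according to whether $r \in (a,r_0]$ or $r \in (r_0,b)$, and in each piece control things using the finiteness of $\nu_1([a,b])$ and of $K := \int_a^{r_0} w_2(t)^{-1/(p-1)}\, dt$.

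The forward implication ($\Rightarrow$) is immediate: for every $r\in(r_0,b)$ one has $\int_{r_0}^r w_2^{-1/(p-1)} \le \int_a^r w_2^{-1/(p-1)}$, hence
$$
\Lambda_{p,[r_0,b],b}(\nu_1,\nu_2) \le \Lambda_{p,[a,b],b}(\nu_1,\nu_2).
$$

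For the converse ($\Leftarrow$), assume $\Lambda_{p,[r_0,b],b}(\nu_1,\nu_2) < \infty$. For $r\in(a,r_0]$ I bound the factor $\nu_1([r,b]) \le \nu_1([a,b]) < \infty$ and $\bigl(\int_a^r w_2^{-1/(p-1)}\bigr)^{p-1} \le K^{p-1} < \infty$; their product gives a uniform bound on this range. For $r\in(r_0,b)$, I write $\int_a^r w_2^{-1/(p-1)} = K + \int_{r_0}^r w_2^{-1/(p-1)}$ and use the elementary inequality $(A+B)^{p-1} \le 2^{p-1}(A^{p-1}+B^{p-1})$ (valid for $p>1$) to estimate
$$
\nu_1([r,b])\Bigl(\int_a^r w_2^{-1/(p-1)}\Bigr)^{p-1} \le 2^{p-1}\nu_1([a,b])K^{p-1} + 2^{p-1}\,\nu_1([r,b])\Bigl(\int_{r_0}^r w_2^{-1/(p-1)}\Bigr)^{p-1}.
$$
Taking the supremum over $r\in(r_0,b)$, the second term is dominated by $2^{p-1}\Lambda_{p,[r_0,b],b}(\nu_1,\nu_2)$, which is finite by hypothesis. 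Combining both ranges yields $\Lambda_{p,[a,b],b}(\nu_1,\nu_2) < \infty$.

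No step looks like a genuine obstacle; the argument is a two-piece split. The only thing to be careful about is the elementary subadditivity inequality $(A+B)^{p-1}\le 2^{p-1}(A^{p-1}+B^{p-1})$ (which holds for all $p>1$, not only $p\ge 2$), and using the two hypotheses in the right place: $\nu_1$ being finite handles the $r\in(a,r_0]$ interval and the cross term, while $w_2^{-1/(p-1)}\in L^1([a,r_0])$ is exactly what makes $K$ finite so that the split is meaningful.
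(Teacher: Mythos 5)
Your proof is correct and follows essentially the same route as the paper: split the supremum at $r_0$, use finiteness of $\nu_1$ and of $K=\int_a^{r_0}w_2^{-1/(p-1)}$ on $(a,r_0]$, and on $(r_0,b)$ decompose $\int_a^r=K+\int_{r_0}^r$ together with the elementary inequality $(A+B)^{p-1}\le c\,(A^{p-1}+B^{p-1})$. The paper merely organizes this through an intermediate quantity $\Lambda_{p,[a,b],b,r_0}$ and uses the sharper constant $c_p=1$ for $1<p\le 2$ and $c_p=2^{p-2}$ for $p>2$, which is immaterial for the finiteness conclusion.
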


\medskip

\begin{proof}[Proof of Lemma \ref{General Lemma}]
Let us define
$$
\Lambda_{p,[a,b],b,r_0}(\nu_1, \nu_2) :=
\sup\limits_{r\in (r_0,b)} \nu_1\left([r,b]\right) \left( \dint_{a}^r w_2^{-1/(p-1)} \right)^{p-1} .
$$

We are going to prove the lemma by showing the following equivalences:
\begin{equation}\label{eq:dosiff}
\Lambda_{p,[a,b],b}(\nu_1, \nu_2) <\iy
\quad
\Longleftrightarrow
\quad
\Lambda_{p,[a,b],b,r_0}(\nu_1, \nu_2) < \iy
\quad
\Longleftrightarrow
\quad
\Lambda_{p,[r_0,b],b}(\nu_1, \nu_2) < \iy\,.
\end{equation}

Note that, since $\nu_1$ is finite and $w_2^{-1/(p-1)} \in L^{1}\left([a,r_0]\right)$,
$$
\sup\limits_{r\in (a,r_0]} \nu_1([r,b]) \left( \dint_{a}^r w_2^{-1/(p-1)} \right)^{p-1}
\le \nu_1([a,b]) \left( \dint_{a}^{r_0} w_2^{-1/(p-1)} \right)^{p-1}=:J < \infty \,.
$$
Then, we deduce
$$
\Lambda_{p,[a,b],b,r_0}(\nu_1, \nu_2)
\le
\Lambda_{p,[a,b],b}(\nu_1, \nu_2)
\le
\max \left\{ \Lambda_{p,[a,b],b,r_0}(\nu_1, \nu_2) ,\, J \right\},
$$
and the first equivalence in \eqref{eq:dosiff} holds.

In order to prove the second one, let us define $K:= \int_{a}^{r_0} w_2^{-1/(p-1)}$,
$c_p:=1$ if $1< p \le 2$, $c_p:=2^{p-2}$ if $p > 2$,
and $L:= c_p\, K^{p-1} \, \nu_1([r_0,b])$.
We have
$$
\begin{aligned}
\sup\limits_{r\in (r_0,b)} \nu_1([r,b]) \left( \dint_{a}^r w_2^{-1/(p-1)} \right)^{p-1}
& = \sup\limits_{r\in (r_0,b)} \nu_1([r,b]) \left( K + \dint_{r_0}^r w_2^{-1/(p-1)} \right)^{p-1}
\\
& \le \sup\limits_{r\in (r_0,b)} \nu_1([r,b]) \, c_p
\left( K^{p-1} +
\left( \dint_{r_0}^r w_2^{-1/(p-1)} \right)^{p-1}  \right)
\\
& \le c_p\, K^{p-1} \, \nu_1([r_0,b])
+ c_p \sup\limits_{r\in (r_0,b)} \nu_1([r,b])
\left( \dint_{r_0}^r w_2^{-1/(p-1)} \right)^{p-1}
\\
& = L + c_p\, \Lambda_{p,[r_0,b],b}(\nu_1, \nu_2) \,.
\end{aligned}
$$
Then, we deduce
$$
\Lambda_{p,[r_0,b],b}(\nu_1, \nu_2)
\le
\Lambda_{p,[a,b],b,r_0}(\nu_1, \nu_2)
\le
L + c_p\, \Lambda_{p,[r_0,b],b}(\nu_1, \nu_2)\,,
$$
which proves the second equivalence in \eqref{eq:dosiff}.
\end{proof}

\medskip

\begin{proof}[Proof of Theorem \ref{t:iff}]
By Proposition \ref{p:first} it suffices to prove that \eqref{eq:iff} implies \eqref{eq:iff2}.
Therefore, let's assume that \eqref{eq:iff} holds.

$\bullet$ Suppose first that $\dyle\lim_{x\to b^-} w_1(x)/w_2(x) = \iy$.
Hence, we have $d\nu_2=w_2 \,dx$ on $[b-\e_0,b]$.
Let us choose $0<\e<\e_0$ with $w_1(x)/w_2(x) \ge (2c)^{p}$ for every $x\in [b-\e,b)$.
For any  $f\in\cM ([a,b])$ with $\supp f \subseteq [b-\e,b]$,
the following holds
$$
\begin{aligned}
c\,\left\|\dint_a^x f(t) \, dt \right\|_{L^p([a,b], \nu_2)}
& =  c  \left( \dint_{b-\e}^b \left|\dint_{b-\e}^x f(t) \, dt \right|^{p} w_2(x)\,dx \right)^{1/p}
\\
& \le  c  \left( \dint_{b-\e}^b \left|\dint_{b-\e}^x f(t) \, dt \right|^{p} (2c)^{-p} \,w_1(x)\,dx \right)^{1/p}
\\
& \le  \frac12 \,\left\|\dint_{b-\e}^x f(t) \, dt \right\|_{L^p([b-\e,b], \nu_1)} \,.
\end{aligned}
$$
Therefore, by \eqref{eq:iff}, we have for every $f\in\cM ([a,b])$ with $\supp f \subseteq [b-\e,b]$ that:
$$
\left\|\dint_{b-\e}^x f(t) \, dt \right\|_{L^p([b-\e,b], \nu_1)}
\le  2\,c \,\left\|f \right\|_{L^{p}([b-\e,b], \nu_3)} \,.
$$
Then, by Theorem \ref{t:Muckenhoupt} we obtain
$\Lambda_{p,[b-\e,b],b}(\nu_1, \nu_3) < \iy$.
Hence,  by Lemma \ref{General Lemma}:
$\Lambda_{p,[a,b],b}(\nu_1, \nu_3) < \iy$,
which is \eqref{eq:iff2} with $k=0$.

\medskip

$\bullet$ Assume now that $\dyle\limsup_{x\to b^-} w_1(x)/w_2(x) < \iy$.
Therefore, $(\nu_1)_s \le c_0\,(\nu_2)_s$ on $[b-\e_0,b]$
and there exist constants $k_0>0$ and $0<\e<\e_0$ with $w_1(x) \le k_0 w_2(x)$ for every $x\in [b-\e,b)$.
Hence, if we define $k:=\max\{c_0,k_0\}$, then $\nu_1 \le k\nu_2$ on $[b-\e,b]$
and $(\nu_1 - k\nu_2)_+=0$ on $[b-\e,b]$.
Thus,
$$
\begin{aligned}
\Lambda_{p,b}\left((\nu_1 - k\nu_2)_+,\nu_3\right) & = \sup\limits_{r\in(a,b)} (\nu_1 - k\nu_2)_+\left([r,b]\right)
\left(\int_a^r w_3(t)^{-1/(p-1)} dt \right)^{p-1}
\\
& = \sup\limits_{r\in(a,b-\e)} (\nu_1 - k\nu_2)_+\left([r,b]\right)
\left(\int_a^r w_3(t)^{-1/(p-1)} dt \right)^{p-1}
\\
& \le (\nu_1 - k\nu_2)_+\left([a,b]\right)
\left(\int_a^{b-\e} w_3(t)^{-1/(p-1)} dt \right)^{p-1}
\\
& \le \nu_1 \left([a,b]\right)
\left(\int_a^{b-\e} w_3(t)^{-1/(p-1)} dt \right)^{p-1}
< \infty \,.
\end{aligned}
$$
\end{proof}

\medskip

As a  consequence of Theorem \ref{t:iff} we have the following result.

\medskip

\begin{corollary}\label{c:iff}
Let  $\nu_1, \nu_2, \nu_3$ be measures on $[a,b]$.
Assume that
$\nu_1$ is finite, $w_3^{-1/(p-1)} \in L^{1}([a,r])$ for every $r\in (a,b)$,
and $(w_1, w_2)\in\mathfrak{C}_b([a,b])$.
Suppose  also that there exists $\e_0>0$ verifying
$(\nu_1)_s([b-\e_0,b])=(\nu_2)_s([b-\e_0,b])=0$.
Then, there exists a constant $c$ such that
$$
\left\|\dint_a^x f(t) \, dt \right\|_{L^p([a,b], \nu_1)}
\le  c  \left( \left\|\dint_a^x f(t) \, dt \right\|_{L^{p}([a,b],\nu_2)}
+ \left\|f \right\|_{L^{p}([a,b], \nu_3)}\right),
\quad \forall\, f\in\cM ([a,b])
$$
 iff
there exists a constant $k\ge 0$ such that
$$
\Lambda_{p,b} \left( (\nu_1-k\nu_2)_+, \nu_3\right) < \iy \,.
$$
\end{corollary}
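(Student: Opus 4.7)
The plan is to reduce Corollary \ref{c:iff} directly to Theorem \ref{t:iff} by verifying that the stronger hypothesis on the singular parts in the corollary implies either alternative hypothesis on $(\nu_1)_s,(\nu_2)_s$ appearing in the theorem. Since the other hypotheses (finiteness of $\nu_1$, local integrability of $w_3^{-1/(p-1)}$ on subintervals $[a,r]\subset[a,b)$, and membership of $(w_1,w_2)$ in $\mathfrak{C}_b([a,b])$) are literally the same in both statements, there is nothing to check for them.

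So the work reduces to a single dichotomy. Since $(w_1,w_2)\in\mathfrak{C}_b([a,b])$, exactly one of the following holds: either $\lim_{x\to b^-} w_1(x)/w_2(x)=\infty$, or $\limsup_{x\to b^-} w_1(x)/w_2(x)<\infty$. In the first case, Theorem \ref{t:iff} requires $(\nu_2)_s([b-\e_0,b])=0$, which is precisely one of the assumptions of the corollary (for some $\e_0>0$). In the second case, Theorem \ref{t:iff} requires $(\nu_1)_s\le c_0(\nu_2)_s$ on $[b-\e_0,b]$ for some $c_0>0$; but by the corollary's hypothesis $(\nu_1)_s([b-\e_0,b])=0$, so this inequality holds trivially with any $c_0\ge 0$ (both sides being the zero measure on that interval).

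Having matched the hypotheses in both branches of the dichotomy, I would then invoke Theorem \ref{t:iff} to conclude that inequality \eqref{eq:iff} is equivalent to the existence of a constant $k\ge 0$ with $\Lambda_{p,b}((\nu_1-k\nu_2)_+,\nu_3)<\infty$, which is exactly the statement of Corollary \ref{c:iff}.

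There is essentially no obstacle here since the argument is a one-step specialization. The only mild point to be careful about is that the $\e_0$ provided by the corollary serves simultaneously for both clauses of the theorem's hypothesis, but this is automatic: we use that single $\e_0$ in whichever clause is relevant, and the corollary's vanishing of $(\nu_1)_s$ on $[b-\e_0,b]$ only strengthens whatever the theorem demands in the $\limsup<\infty$ case.
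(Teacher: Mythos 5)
Your proposal is correct and is exactly the route the paper intends: Corollary \ref{c:iff} is stated there as a direct consequence of Theorem \ref{t:iff}, and your verification that the corollary's hypothesis $(\nu_1)_s([b-\e_0,b])=(\nu_2)_s([b-\e_0,b])=0$ covers both branches of the theorem's dichotomy (trivially, since the zero measure is dominated by $c_0(\nu_2)_s$ for any $c_0>0$) is the whole content of the deduction.
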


\medskip

We also have a result similar to Theorem \ref{t:iff} if $w_3^{-1/(p-1)}\not\in L^1$.

\medskip

\begin{theorem} \label{t:iff2}
Let  $\nu_1, \nu_2, \nu_3$ be measures on $[a,b]$.
Assume that $w_3^{-1/(p-1)} \notin L^{1}(I)$ for every interval $I\subseteq [a,b]$,
and $(w_1, w_2)$ is a pair in the class $\mathfrak{C}_b([a,b])$.
Suppose also that there exist constants $\e_0,c_0>0$ verifying the following:
\begin{itemize}
\item $(\nu_2)_s([b-\e_0,b])=0$ if $\dyle\lim_{x\to b^-} \frac{w_1(x)}{w_2(x)} = \iy$ and
$(\nu_1)_s \le c_0(\nu_2)_s$ on $[b-\e_0,b]$ if
 $\dyle\limsup_{x\to b^-} \frac{w_1(x)}{w_2(x)}  < \iy$,
\item for each $\e>0$ there exists a constant $c_\e>0$ with
$\nu_1 \le c_\e\nu_2$ on $[a,b-\e]$.
\end{itemize}
Then, there exists a constant $c$ such that
\begin{equation} \label{eq:iff21}
\left\|\dint_a^x f(t) \, dt \right\|_{L^p([a,b], \nu_1)}
\le  c \left( \left\|\dint_a^x f(t) \, dt \right\|_{L^{p}([a,b],\nu_2)}
+ \left\|f \right\|_{L^{p}([a,b], \nu_3)}\right),
\quad \forall\, f\in\cM ([a,b])\,,
\end{equation}
iff there exists a constant $k\ge 0$ such that
\begin{equation}\label{eq:mc2}
\Lambda_{p,b} \left( (\nu_1-k\nu_2)_+, \nu_3\right) < \iy.
\end{equation}
\end{theorem}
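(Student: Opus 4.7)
The direction \eqref{eq:mc2} $\Rightarrow$ \eqref{eq:iff21} is immediate from Proposition \ref{p:first}. For the converse I plan to adapt the two-case argument used in the proof of Theorem \ref{t:iff}, invoking the dichotomy provided by $(w_1,w_2)\in \mathfrak{C}_b([a,b])$. The essential difference is that Lemma \ref{General Lemma} is unavailable here, since its hypothesis $w_3^{-1/(p-1)}\in L^1([a,r_0])$ is precisely what fails; the second bulleted assumption $\nu_1 \le c_\e \nu_2$ on $[a,b-\e]$ plays its role instead, serving to bridge a local condition near $b$ to a global one on $[a,b]$.

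Assume \eqref{eq:iff21}, and first suppose $\dyle\lim_{x\to b^-} w_1/w_2 = \iy$. I would reproduce verbatim the test-function step from the proof of Theorem \ref{t:iff}: choose $\e\in(0,\e_0)$ so that $w_1/w_2 \ge (2c)^p$ on $[b-\e,b)$; since $(\nu_2)_s([b-\e_0,b])=0$, for every $f\in\cM([a,b])$ with $\supp f\subseteq [b-\e,b]$ the bound $c\,\|\int_a^x f(t)\,dt\|_{L^p([a,b],\nu_2)} \le \tfrac12\|\int_a^x f(t)\,dt\|_{L^p([a,b],\nu_1)}$ holds, so inserting it into \eqref{eq:iff21} and absorbing the $\nu_1$-term yields $\|\int_{b-\e}^x f(t)\,dt\|_{L^p([b-\e,b],\nu_1)} \le 2c\,\|f\|_{L^p([b-\e,b],\nu_3)}$. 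Theorem \ref{t:Muckenhoupt} on the interval $[b-\e,b]$ then gives $\Lambda_{p,[b-\e,b],b}(\nu_1,\nu_3) < \iy$. The decisive new step is to exploit $w_3^{-1/(p-1)}\notin L^1(I)$ for every interval $I$: for each $r\in(b-\e,b)$ the inner integral $\int_{b-\e}^r w_3(t)^{-1/(p-1)}\,dt$ equals $+\iy$, so finiteness of $\Lambda_{p,[b-\e,b],b}(\nu_1,\nu_3)$ forces $\nu_1([r,b])=0$ for every such $r$, and therefore $\nu_1((b-\e,b])=0$. Combining this with $\nu_1\le c_\e\nu_2$ on $[a,b-\e]$ gives $\nu_1\le c_\e\nu_2$ on all of $[a,b]$, whence $(\nu_1-c_\e\nu_2)_+\equiv 0$ and \eqref{eq:mc2} holds trivially with $k=c_\e$.

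In the case $\dyle\limsup_{x\to b^-} w_1/w_2 < \iy$ I can essentially copy the corresponding case of Theorem \ref{t:iff}: the assumptions yield $k_0>0$ and $\e\in(0,\e_0)$ with $w_1 \le k_0 w_2$ on $[b-\e,b)$, and together with $(\nu_1)_s\le c_0(\nu_2)_s$ on $[b-\e_0,b]$ this gives $\nu_1 \le \max\{c_0,k_0\}\nu_2$ on $[b-\e,b]$; pasting with the new global bound $\nu_1\le c_\e\nu_2$ on $[a,b-\e]$ yields $\nu_1 \le K\nu_2$ on all of $[a,b]$ for some $K$, so once again $(\nu_1-K\nu_2)_+\equiv 0$ and \eqref{eq:mc2} holds trivially. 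The only place where genuine work occurs is the first case, and the main point I expect to double-check carefully is that the non-integrability of $w_3^{-1/(p-1)}$ really does kill the relevant tail of $\nu_1$ on $(b-\e,b]$; once that is verified, both cases collapse to a trivial instance of \eqref{eq:mc2}.
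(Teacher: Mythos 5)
Your proposal is correct and follows essentially the same route as the paper: the same test-function reduction to $\Lambda_{p,[b-\e,b],b}(\nu_1,\nu_3)<\iy$ in the first case, the same observation that non-integrability of $w_3^{-1/(p-1)}$ forces $\nu_1=0$ on $(b-\e,b]$, and the same pasting with $\nu_1\le c_\e\nu_2$ on $[a,b-\e]$ in both cases. The only difference is that you spell out in more detail why the divergent inner integral kills $\nu_1([r,b])$, a step the paper states without elaboration.
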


\medskip

\begin{proof}

By Proposition \ref{p:first} it suffices to prove \eqref{eq:mc2} assuming that \eqref{eq:iff21} holds.

\smallskip

If we have $\dyle\lim_{x\to b^-} w_1(x)/w_2(x) = \iy$, then,
as in the proof of Theorem \ref{t:iff}, we can choose $0<\e<\e_0$ such that
for any  $f\in\cM ([a,b])$ with $\supp f \subseteq [b-\e,b]$,
$$
\left\|\dint_{b-\e}^x f(t) \, dt \right\|_{L^p([b-\e,b], \nu_1)}
\le  2\,c \,\left\|f \right\|_{L^{p}([b-\e,b], \nu_3)} \,.
$$
Then, by Theorem \ref{t:Muckenhoupt} we obtain
$\Lambda_{p,[b-\e,b],b}(\nu_1, \nu_3) < \iy$.
Since $w_3^{-1/(p-1)} \notin L^{1}(I)$ for every interval $I\subseteq [a,b]$,
$\nu_1=0$ on $(b-\e,b]$.
By hypothesis, there exists a constant $k\ge 0$ with
$\nu_1 \le k\nu_2$ on $[a,b-\e]$.
Hence, we conclude $\nu_1 \le k\nu_2$ on $[a,b]$,
$(\nu_1 - k\nu_2)_+=0$ on $[a,b]$, and $\Lambda_{p,b} \left( (\nu_1-k\nu_2)_+, \nu_3\right)=0$.

\smallskip

If  we suppose now that  $\dyle\limsup_{x\to b^-} w_1(x)/w_2(x) < \iy$, then
we have $(\nu_1)_s \le c_0(\nu_2)_s$ on $[b-\e_0,b]$
and there exist constants $k_0>0$ and $0<\e<\e_0$ with $w_1(x) \le k_0 w_2(x)$ for every $x\in [b-\e,b)$.
Thus, taking $k_1:=\max\{c_0,k_0\}$, we have $\nu_1 \le k_1\nu_2$ on $[b-\e,b]$.
Since $\nu_1 \le c_\e\nu_2$ on $[a,b-\e]$,
if we define $k:=\max\{c_\e,k_1\}$, then $\nu_1 \le k\nu_2$ on $[a,b]$
and $(\nu_1 - k\nu_2)_+=0$ on $[a,b]$, and finally, $\Lambda_{p,b} \left( (\nu_1-k\nu_2)_+, \nu_3\right)=0$.
\end{proof}

\medskip

For the case of weights comparable to monotone functions we show in the next theorem sufficient conditions to obtain
our estimate.

\medskip

\begin{theorem} \label{t:iff3}
Let  $\nu_1, \nu_2$ be measures on $[a,b]$.
Assume that we have either

\smallskip

$(1)$ $(\nu_1)_s\le k_0(\nu_2)_s$ for some constant $k_0$
and $w_1$ is comparable to a
%(non-strictly) decreasing
non-increasing function on $[a,b]$,

\noindent or

$(2)$ $\nu_1$ is a finite measure, $w_1^{-1/(p-1)}\in L^{1}([a,a+\e])$ for some $\e>0$,
and $w_1$ is comparable to a
%(non-strictly) increasing
non-decreasing function on $[a,b]$.

\smallskip

Then, there exists a constant $c$ such that
$$
\left\|\dint_a^x f(t) \, dt \right\|_{L^p([a,b], \nu_1)}
\le  c \left( \left\|\dint_a^x f(t) \, dt \right\|_{L^{p}([a,b],\nu_2)}
+ \left\|f \right\|_{L^{p}([a,b], \nu_1)}\right),
 \quad \forall\, f\in\cM ([a,b])\,,
$$
 and
\begin{equation} \label{eq:iff32}
\Lambda_{p,b} \left( (\nu_1-k\nu_2)_+, \nu_1\right) < \iy
\end{equation}
for some constant $k\ge 0$.
\end{theorem}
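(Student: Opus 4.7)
By Proposition \ref{p:first} applied with $\nu_3=\nu_1$, condition \eqref{eq:iff32} already implies the integral inequality asserted in the theorem. Thus the plan reduces to establishing \eqref{eq:iff32} under each of the two alternative hypotheses.

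Under hypothesis $(1)$, I would take $k=k_0$. The assumption $(\nu_1)_s\le k_0(\nu_2)_s$ forces the singular part of $\nu_1-k_0\nu_2$ to be non-positive; since the Lebesgue decomposition is preserved when taking positive parts, $(\nu_1-k_0\nu_2)_+$ is absolutely continuous with density $(w_1-k_0w_2)_+\le w_1$. Let $W$ be a non-increasing function with $W/C\le w_1\le CW$ a.e.\ on $[a,b]$. Using the monotonicity of $W$ (hence of $W^{-1/(p-1)}$ in the opposite direction),
\begin{equation*}
\int_r^b w_1(t)\,dt\le C\,W(r)(b-a),\qquad \int_a^r w_1(t)^{-1/(p-1)}\,dt\le C^{1/(p-1)}W(r)^{-1/(p-1)}(b-a).
\end{equation*}
Multiplying the first estimate by the $(p-1)$-th power of the second produces the uniform bound $\Lambda_{p,b}((\nu_1-k_0\nu_2)_+,\nu_1)\le C^2(b-a)^p<\iy$ (the degenerate case $W(r)=0$ is absorbed by the convention $0\cdot\iy=0$, since then $w_1=0$ a.e.\ on $[r,b]$).

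Under hypothesis $(2)$, I would take $k=0$, so $(\nu_1-k\nu_2)_+=\nu_1$ and the target becomes $\Lambda_{p,b}(\nu_1,\nu_1)<\iy$. The factor $\nu_1([r,b])$ is trivially dominated by $\nu_1([a,b])<\iy$, reducing everything to a uniform bound for $\int_a^r w_1(t)^{-1/(p-1)}\,dt$. On $[a,a+\e]$ this is exactly the hypothesis $w_1^{-1/(p-1)}\in L^1([a,a+\e])$. On $[a+\e,b]$ I would invoke comparability with a non-decreasing $W$: since $W^{-1/(p-1)}$ is then comparable to $w_1^{-1/(p-1)}$ and thus integrable near $a$, and $W$ is non-decreasing, one must have $W(a+\e)>0$. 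Consequently $w_1(t)\ge W(a+\e)/C$ for a.e.\ $t\in[a+\e,b]$, which bounds $w_1^{-1/(p-1)}$ pointwise there. Splitting the integral at $a+\e$ gives a bound independent of $r$.

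The main obstacle in both cases is one clean structural observation. In $(1)$, it is recognizing that the hypothesis on the singular parts eliminates any singular contribution to $(\nu_1-k_0\nu_2)_+$, so that the entire Muckenhoupt-type estimate can be carried out in the absolutely continuous world where the monotonicity of $w_1$ can be exploited. In $(2)$, it is that the integrability of $w_1^{-1/(p-1)}$ near the left endpoint propagates, via the comparability with a monotone function, to a genuine essential lower bound on $w_1$ on any subinterval separated from $a$. Once these two observations are in place, the remaining computations are direct monotonicity manipulations.
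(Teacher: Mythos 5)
Your proposal is correct and follows essentially the same route as the paper: both parts reduce to establishing \eqref{eq:iff32} via Proposition \ref{p:first} with $\nu_3=\nu_1$, case $(1)$ by noting that the singular hypothesis kills the singular part of $(\nu_1-k_0\nu_2)_+$ and then exploiting the monotonicity of $w_1$ to bound $\Lambda_{p,b}$ by a constant times $(b-a)^p$, and case $(2)$ by deducing an essential lower bound for $w_1$ away from $a$ so that $w_1^{-1/(p-1)}\in L^1([a,b])$ and $\Lambda_{p,b}(\nu_1,\nu_1)\le \nu_1([a,b])\bigl(\int_a^b w_1^{-1/(p-1)}\bigr)^{p-1}<\iy$. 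The only cosmetic difference is that the paper normalizes $w_1$ to be genuinely monotone, while you carry the comparability constant $C$ explicitly.
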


\medskip

\begin{proof}
By Proposition \ref{p:first}, it suffices to prove \eqref{eq:iff32}.
Without loss of generality we can assume that $w_1$ is a
 monotone function on $[a,b]$.

$\bullet$ Assume first that $(\nu_1)_s\le k_0(\nu_2)_s$ for some constant $k_0$
and that $w_1$ is a %(non-strictly) decreasing
non-increasing function on $[a,b]$.
Then $(\nu_1-k_0\nu_2)_+ \le (\nu_1)_{ac}$, and it suffices to prove that
$\Lambda_{p,b} \left( (\nu_1)_{ac}, \nu_1\right) < \iy$, since then
\eqref{eq:iff32} holds.
We have
$$
\begin{aligned}
\Lambda_{p,b} \left( (\nu_1)_{ac}, \nu_1 \right)
& = \sup\limits_{r\in(a,b)} (\nu_1)_{ac}\left([r,b]\right)
\left(\int_{a}^r w_1(t)^{-1/(p-1)} dt \right)^{p-1}
\\
& = \sup\limits_{r\in(a,b)} \left(\int_{r}^b w_1(t)\, dt \right)
\left(\int_{a}^r w_1(t)^{-1/(p-1)} dt \right)^{p-1}
\\
& \le \sup\limits_{r\in(a,b)} w_1(r) \,(b - r) \, w_1(r)^{-1} (r-a)^{p-1}
\\
& \le (b -a)^{p} < \iy \,.
\end{aligned}
$$

$\bullet$ Assume now that $\nu_1$ is a finite measure, $w_1^{-1/(p-1)}\in L^{1}([a,a+\e])$ for some $\e>0$,
and $w_1$ is a %(non-strictly) increasing
non-decreasing function on $[a,b]$.
In this case we have $w_1(x) \ge w_1(a+\e) > 0$ for every $x \in [a+\e,b]$
and we conclude that
$w_1^{-1/(p-1)}\in L^{1}([a,b])$.
Therefore,  for any $k\ge 0$,
$$
\begin{aligned}
\Lambda_{p,b} \left( (\nu_1-k\nu_2)_+, \nu_1 \right)
&
\le \Lambda_{p,b} \left( \nu_1, \nu_1 \right)
= \sup\limits_{r\in(a,b)} \nu_1\left([r,b]\right)
\left(\int_{a}^r w_1(t)^{-1/(p-1)} dt \right)^{p-1}
\\
& \le \nu_1\left([a,b]\right)
\left(\int_{a}^b w_1(t)^{-1/(p-1)} dt \right)^{p-1} < \iy \,.
\end{aligned}
$$
\end{proof}

\medskip

We finish this section with a result on polynomial approximation for the Muckenhoupt inequality with three measures.

\medskip

\begin{theorem} \label{l:polMuckenhoupt}
Let $\nu_1, \nu_2, \nu_3$ be finite measures on $[a,b]$
with $w_3^{-1/(p-1)} \in L^{1}([a,r])$ for every $r\in (a,b)$.
Then there exists a constant $c>0$ such that the following inequality
\begin{equation} \label{eql:polMuckenhoupt}
\left\|  \dint_a^x f(t) \, dt \right\|_{L^p([a,b], \nu_1)}
\le
c \, \left( \left\|  \dint_a^x f(t) \, dt \right\|_{L^p([a,b], \nu_2)}
+ \left\|  f \right\|_{L^p([a,b], \nu_3)} \right)
\end{equation}
holds $\forall\, f\in\cM ([a,b])$  iff
it holds for any polynomial.
\end{theorem}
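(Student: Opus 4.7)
The direction $(\Rightarrow)$ is immediate, since polynomials belong to $\cM([a,b])$. For $(\Leftarrow)$, assume \eqref{eql:polMuckenhoupt} holds for every polynomial with some constant $c$, fix $f\in\cM([a,b])$, and write $F(x):=\int_a^x f(t)\,dt$; we may assume $\|F\|_{L^p(\nu_2)}+\|f\|_{L^p(\nu_3)}<\infty$, as otherwise \eqref{eql:polMuckenhoupt} is trivial.

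The plan is a two-stage density argument. First, I would extend the inequality from polynomials to arbitrary continuous functions on $[a,b]$. Given continuous $g$ on $[a,b]$, the Weierstrass theorem supplies polynomials $p_n\to g$ uniformly, so the antiderivatives $\int_a^\cdot p_n$ also converge uniformly to $\int_a^\cdot g$. Since $\nu_1,\nu_2,\nu_3$ are finite, uniform convergence yields convergence in each $L^p(\nu_j)$, and the polynomial hypothesis passes immediately to the limit to give \eqref{eql:polMuckenhoupt} for every continuous $g$.

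Second, I would extend from continuous to measurable $f$. The goal is to produce continuous functions $g_n$ on $[a,b]$ such that $g_n\to f$ in $L^p(\nu_3)$ and $G_n(x):=\int_a^x g_n(t)\,dt\to F$ in $L^p(\nu_2)$. H\"older's inequality combined with the hypothesis $w_3^{-1/(p-1)}\in L^1([a,r])$ for every $r\in(a,b)$ gives
\[
|G_n(x)-F(x)|\le \|g_n-f\|_{L^p(\nu_3)}\,\|w_3^{-1/(p-1)}\|_{L^1([a,x])}^{(p-1)/p},\qquad x\in[a,b),
\]
so $G_n\to F$ uniformly on every compact subinterval of $[a,b)$. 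Once both convergences are established, applying the continuous-function inequality to each $g_n$ and passing to the limit (Fatou's lemma on the left, using the pointwise convergence $G_n\to F$ on $[a,b)$, together with the two $L^p$ convergences on the right) yields \eqref{eql:polMuckenhoupt} for $f$.

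The main obstacle is the construction of $g_n$ with these two simultaneous convergences: the local uniform convergence on $[a,b)$ above does not by itself yield $L^p(\nu_2)$-convergence when $\nu_2$ accumulates mass near or at $b$. A workable recipe is $(i)$ truncate $f$ to $[a,b-1/n]$, where H\"older combined with the integrability of $w_3^{-1/(p-1)}$ on compact subintervals of $[a,b)$ makes $f$ Lebesgue-integrable; $(ii)$ mollify inside this subinterval to obtain a continuous approximation of $f$; and $(iii)$ extend continuously on $[b-1/n,b]$ so that $G_n$ matches the appropriate boundary value of $F$, which is meaningful whenever $\nu_j(\{b\})>0$ since then $F\in L^p(\nu_j)$ forces $F(b)$ to exist. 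A dominated-convergence argument, with $F\in L^p(\nu_2)$ providing the dominator, then delivers the required $L^p(\nu_2)$-convergence of $G_n$ to $F$, closing the argument.
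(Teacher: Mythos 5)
Your overall architecture (Weierstrass to pass from polynomials to continuous functions, then a density argument to pass to measurable $f$) matches the paper's, but your second stage has genuine gaps. The decisive device in the paper, which your proposal is missing, is to approximate $f$ \emph{simultaneously} in $L^p([a,b],\nu_3)$ and in $L^1([a,b],dx)$ (Lebesgue measure): choosing a continuous $g$ and then a polynomial $q$ with $\|f-q\|_{L^p([a,b],\nu_3)}+\|f-q\|_{L^1([a,b])}<\e$, one gets
\[
\sup_{x\in[a,b]}\left|\int_a^x f(t)\,dt-\int_a^x q(t)\,dt\right|\le \|f-q\|_{L^1([a,b])}<\e ,
\]
so the antiderivatives converge uniformly on all of $[a,b]$ and hence in $L^p(\nu_1)$ and $L^p(\nu_2)$ for finite measures, with no trouble at the endpoint $b$. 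Your substitute, controlling $G_n-F$ by H\"older through $\|g_n-f\|_{L^p(\nu_3)}$ and $\|w_3^{-1/(p-1)}\|_{L^1([a,x])}$, only works on $[a,r]$ with $r<b$, and the patch you sketch does not close the gap: (i) mollification of $fI_{[a,b-1/n]}$ need not converge in $L^p(\nu_3)$ when $\nu_3$ has a nontrivial singular part or a general weight $w_3$ (one needs the Lusin-type density of $\mathcal{C}([a,b])$ in $L^p(\nu_3)$, not convolution); (ii) the claimed dominated convergence of $G_n$ to $F$ in $L^p(\nu_2)$ has no dominator --- nothing in the construction bounds $|G_n|$ by an $L^p(\nu_2)$ majorant, and $F$ may be unbounded near $b$.

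The case $f\notin L^1([a,b])$ also needs a treatment your recipe does not supply: your step (iii) presupposes that $F$ has a boundary value at $b$, which can fail. The paper first proves the inequality for all $f\in L^1([a,b])$ as above, and then truncates, setting $f_n:=fI_{[a,b_n]}$ with $b_n\uparrow b$ chosen at zeros of $F$ whenever $F$ has infinitely many zeros near $b$; this choice makes $|f_n|$ and $\left|\int_a^x f_n(t)\,dt\right|$ increase with $n$, so the monotone convergence theorem transfers the inequality to $f$. I recommend reorganizing your second stage around the simultaneous $L^p(\nu_3)$ and $L^1(dx)$ approximation, and adding the truncation/monotone-convergence step for non-integrable $f$.
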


\medskip

\begin{proof}
Let us assume that \eqref{eql:polMuckenhoupt} holds for every polynomial and
define $c_1:=\max\big\{\nu_1([a,b])^{1/p},\, \nu_2([a,b])^{1/p},$ $\nu_3([a,b])^{1/p}\big\}$.
Fix a  function $f\in\cM ([a,b])$ and $\e>0$;
without loss of generality we can assume that $f$ belongs to $L^p([a,b], \nu_3)$.

\smallskip

Let's assume first that $f\in L^1([a,b])$.
The classical proof of the density of the continuous functions in $L^p$
(using the density of the simple functions and Lusin's Theorem) gives, in fact, that there exists a function $g\in \mathcal{C}([a,b])$
with
$$
\| f-g \|_{L^p([a,b], \nu_3)} + \| f-g \|_{L^1([a,b])} < \e\,.
$$
Weierstrass' Theorem provides a polynomial $q$ with $\left\| g-q \right\|_{L^\infty([a,b])} < \e$.
Hence,
$$
\begin{aligned}
\| g-q \|_{L^p([a,b], \nu_3)}
+ \| g-q \|_{L^1([a,b])}
& < \e\, \nu_3([a,b])^{1/p} + \e\, (b-a)
\le (c_1+b-a)\, \e
\,,
\\
\| f-q \|_{L^p([a,b], \nu_3)}
+ \| f-q \|_{L^1([a,b])}
& < (c_1+b-a+1)\, \e
=: c_2\, \e
\,,
\\
\left\|  \dint_a^x f(t) \, dt - \dint_a^x q(t) \, dt \right\|_{L^p([a,b], \nu_j)}
& \le
\|  f -q \|_{L^1([a,b])} \nu_j([a,b])^{1/p}
< c_1\,c_2\, \e\,.
\end{aligned}
$$
Since $\e>0$ is arbitrary, \eqref{eql:polMuckenhoupt} holds if $f\in L^1([a,b])$.

\smallskip

Now, let's suppose that $f\notin L^1([a,b])$.
Since $w_3^{-1/(p-1)} \in L^{1}([a,x])$ for every $x\in (a,b)$, we have
by H\"older inequality,
$$
\int_a^x \left|f(t)\right|\, dt = \int_a^x \left|f(t)\right|w_3(t)^{1/p}w_3(t)^{-1/p}\, dt \le
\left\|f\right\|_{L^p([a,b],\nu_3)} \left\|w_3^{-1/(p-1)}\right\|^{(p-1)/p}_{L^{1}([a,x])} < \infty \,,
$$
and then $f\in L^1([a,x])$ for every $x \in (a,b)$.
If the function $\int_a^x f$ has infinitely many zeros in any neighborhood of $b$, let $\{b_n\}$ be an increasing sequence
with $\int_a^{b_n} f=0$ and $\dyle\lim_{n \to \infty} b_n = b$;
otherwise, let $\{b_n\}$ be any increasing sequence with $\dyle\lim_{n \to \infty} b_n = b$.
Consider the sequence of functions $f_n:=f I_{[a,b_n]}\in L^1([a,b])$; we have proved
$$
\left\|  \dint_a^x f_n(t) \, dt \right\|_{L^p([a,b], \nu_1)}
\le
c \, \left( \left\|  \dint_a^x f_n(t) \, dt \right\|_{L^p([a,b], \nu_2)}
+ \left\|  f_n \right\|_{L^p([a,b], \nu_3)} \right)
$$
for every $n$.
Since $|  f_n |$ and $\left|  \int_a^x f_n(t) \, dt \right|$ increase with $n$,
\eqref{eql:polMuckenhoupt} holds for $f$ by the monotone convergence Theorem.

\end{proof}

%%%%%%%%%%%%%%%%%%%%%%%%%%%%%%%%%%%%%%%%%%%%%%%%%%%%%%%%%%%%%%%%%%%%%%%%%%%%%%%%%%%%%%%%%%%%%%%

\medskip

\section{A negative condition}\label{cuarta}

We show in this section a class of measures which do not satisfy the generalization of Muckenhoupt inequality \eqref{eq:main}.

\begin{theorem} \label{t:niff}
Let  $\nu_1, \nu_2, \nu_3$ be measures on $[a,b]$.
Assume that there exists $b_0\in [a,b)$ such that
$\nu_2([b_0,b]) < \infty$,
and $w_3^{-1/(p-1)} \notin L^{1}([r,b])$ for every $r\in (b_0,b)$.
If $\nu_1(\{b\})>0$ and $\nu_2(\{b\})=0$, then there is no constant $c$
for which
\begin{equation}\label{eq:mc3}
\left\|\dint_a^x f(t) \, dt \right\|_{L^p([a,b], \nu_1)}
\le  c \left(  \left\|\dint_a^x f(t) \, dt \right\|_{L^{p}([a,b],\nu_2)}
+ \left\|f \right\|_{L^{p}([a,b], \nu_3)}\right),
\quad \forall\, f\in\cM ([a,b])\,,
\end{equation}
with
$$
\Lambda_{p,b} \left( (\nu_1-k\nu_2)_+, \nu_3\right) = \iy, \qquad \forall \, k\ge 0.
$$
\end{theorem}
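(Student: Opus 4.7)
The plan is to establish both conclusions by exploiting that the atom of $\nu_1$ at $b$ is invisible to $\nu_2$ (since $\nu_2(\{b\})=0$) and can be rendered essentially invisible to $\nu_3$ (since $w_3^{-1/(p-1)}\notin L^1([r,b])$ near $b$).

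For the second conclusion, note that $\nu_2(\{b\})=0$ forces $(\nu_1-k\nu_2)_+(\{b\})=\nu_1(\{b\})>0$ for every $k\ge 0$, so $(\nu_1-k\nu_2)_+([r,b])\ge\nu_1(\{b\})$ for every $r\in(a,b)$. The hypothesis $\int_r^b w_3^{-1/(p-1)}\,dt=\infty$ for every $r\in(b_0,b)$ gives $\int_{b_0}^b w_3^{-1/(p-1)}=\infty$, and monotone convergence then yields $\int_a^r w_3^{-1/(p-1)}\to\infty$ as $r\to b^-$. Substituting into the definition of $\Lambda_{p,b}((\nu_1-k\nu_2)_+,\nu_3)$ gives the divergence.

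For the first conclusion I would construct, for every $\eta>0$, a function $f_\eta\in\cM([a,b])$ whose primitive $F_\eta(x):=\int_a^x f_\eta$ satisfies $\|F_\eta\|_{L^p(\nu_1)}\ge\nu_1(\{b\})^{1/p}$ while $\|F_\eta\|_{L^p(\nu_2)}+\|f_\eta\|_{L^p(\nu_3)}\le 2\eta$; letting $\eta\to 0^+$ then rules out \eqref{eq:mc3} for any constant $c$. Since $\nu_2$ is finite on $[b_0,b]$ and $\nu_2(\{b\})=0$, continuity from above provides $r=r_\eta\in(b_0,b)$ with $\nu_2([r,b])<\eta^p$; I would then arrange $\supp{f_\eta}\subseteq[r,b)$, $\int_a^b f_\eta=1$ and $\|f_\eta\|_{L^p(\nu_3)}<\eta$. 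Under these properties $F_\eta$ vanishes on $[a,r)$, satisfies $0\le F_\eta\le 1$ and $F_\eta(b)=1$, so $\|F_\eta\|_{L^p(\nu_2)}^p\le\nu_2([r,b])<\eta^p$ while $\|F_\eta\|_{L^p(\nu_1)}^p\ge|F_\eta(b)|^p\nu_1(\{b\})=\nu_1(\{b\})$.

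To produce $f_\eta$ I would split on whether $|\{w_3=0\}\cap[r,b)|>0$. If so, fix a Lebesgue-null set $N$ carrying $(\nu_3)_s$, set $A:=(\{w_3=0\}\cap[r,b))\setminus N$ (so $|A|>0$, $w_3=0$ on $A$, and $(\nu_3)_s(A)=0$), and take $f_\eta:=I_A/|A|$, which gives $\|f_\eta\|_{L^p(\nu_3)}=0$. Otherwise $w_3>0$ a.e.\ on $[r,b)$; take $B_n:=[r,b-1/n]\cap\{w_3\ge 1/n\}$, on which $w_3^{-1/(p-1)}\le n^{1/(p-1)}$, so $\int_{B_n}w_3^{-1/(p-1)}<\infty$. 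Since $B_n\uparrow[r,b)\cap\{w_3>0\}$ has full Lebesgue measure in $[r,b)$, monotone convergence together with $\int_r^b w_3^{-1/(p-1)}=\infty$ forces $\int_{B_n}w_3^{-1/(p-1)}\to\infty$; for $n$ large, $f_\eta:=w_3^{-1/(p-1)}I_{B_n}\big/\int_{B_n}w_3^{-1/(p-1)}$ satisfies $\int f_\eta=1$ and $\|f_\eta\|_{L^p(\nu_3)}^p=\bigl(\int_{B_n}w_3^{-1/(p-1)}\bigr)^{1-p}<\eta^p$. The main technical obstacle is precisely this possibility that $w_3^{-1/(p-1)}$ fails to be locally $L^1$ near $b$, which prevents the direct normalization of $w_3^{-1/(p-1)}$ itself; the case split together with the truncation $B_n$ is the device that sidesteps it.
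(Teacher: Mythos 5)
Your proof follows the same core idea as the paper's --- test \eqref{eq:mc3} against a normalization of $w_3^{-1/(p-1)}$ supported near $b$, use $\nu_1(\{b\})>0$ to bound the left-hand side below, and use $\nu_2([r,b])\to 0$ to kill the $\nu_2$-term --- but you handle the technical obstruction differently, and more simply. The paper first treats the case $w_3^{-1/(p-1)}\in L^1([b_0,r])$ for all $r\in(b_0,b)$ (where one can arrange $\int_{a_n}^{b_n}w_3^{-1/(p-1)}=n$ exactly), and then reduces the general case to it by constructing an auxiliary weight $\overline w_3\ge w_3$ and the corresponding measure $\overline\nu_3\ge\nu_3$; your truncation $B_n=[r,b-1/n]\cap\{w_3\ge 1/n\}$, together with the separate (trivial) case $|\{w_3=0\}\cap[r,b)|>0$, dispenses with that reduction entirely. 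You also prove $\Lambda_{p,b}\left((\nu_1-k\nu_2)_+,\nu_3\right)=\infty$ directly from $(\nu_1-k\nu_2)_+(\{b\})=\nu_1(\{b\})>0$ and $\int_a^r w_3^{-1/(p-1)}\to\infty$ as $r\to b^-$, whereas the paper obtains it as the contrapositive of Proposition \ref{p:first}; both routes are valid.

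One step needs repair: in your second case you take $f_\eta=w_3^{-1/(p-1)}I_{B_n}\big/\int_{B_n}w_3^{-1/(p-1)}$ and assert $\|f_\eta\|_{L^p(\nu_3)}^p=\bigl(\int_{B_n}w_3^{-1/(p-1)}\bigr)^{1-p}$. This computes only the absolutely continuous contribution; if $(\nu_3)_s(B_n)>0$ the norm also contains $\int_{B_n}|f_\eta|^p\,d(\nu_3)_s$, which you have not controlled (on $B_n$ one only knows $f_\eta\le n^{1/(p-1)}\big/\int_{B_n}w_3^{-1/(p-1)}$, and $(\nu_3)_s(B_n)$ can be large). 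The fix is exactly the device you already use in your first case, and which the paper applies through its set $S$: replace $B_n$ by $B_n\setminus N$, where $N$ is a Lebesgue-null set carrying $(\nu_3)_s$. This changes neither $\int_{B_n}w_3^{-1/(p-1)}\,dx$ nor the absolutely continuous part of the $\nu_3$-integral, and annihilates the singular contribution. With that one-line correction the argument is complete.
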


\begin{proof}
By Proposition \ref{p:first}, it suffices to prove the first statement.

\smallskip

Assume first that
$w_3^{-1/(p-1)} \in L^{1}([b_0,r])$ for every $r\in (b_0,b)$.
Since $\nu_1(\{b\})>0$, it suffices to show that
there does not exist any constant $c$ verifying
\begin{equation}\label{eq:mc4}
\left|\dint_a^b f(t) \, dt \right|
\le  c \left(  \left\|\dint_a^x f(t) \, dt \right\|_{L^{p}([a,b],\nu_2)}
+ \left\|f \right\|_{L^{p}([a,b], \nu_3)}\right) ,
 \quad \forall\, f\in\cM ([a,b]).
\end{equation}
Arguing by  contradiction, let us suppose  that there exists $c>0$ satisfying \eqref{eq:mc4}.

\smallskip

Let $S$ be a set with zero Lebesgue measure and such that $(\nu_3)_s|_S=(\nu_3)_s$.
For every natural number $n$, we define $a_n:=\max\{b_0,b-1/n\}$.
Since $w_3^{-1/(p-1)} \in L^{1}([b_0,r])$ and $w_3^{-1/(p-1)} \notin L^{1}([r,b])$ for every $r\in (b_0,b)$,
there exists $b_n\in (a_n,b)$ with $\int_{a_n}^{b_n} w_3^{-1/(p-1)}=n$;
let us define $f_n:=w_3^{-1/(p-1)} I_{[a_n,b_n]\setminus S}$.
By \eqref{eq:mc4} applied to $f_n$
$$
%\begin{aligned}
%\left|\dint_a^b f_n(t) \, dt \right|
%& \le  c  \left\|\dint_a^x f_n(t) \, dt \right\|_{L^{p}([a,b],\nu_2)}
%+ c \,\right\|f_n \right\|_{L^{p}([a,b], \nu_3)} \,,
%\\
n
%&
\le  c  \, n \, \nu_2([a_n,b])^{1/p}
+ c \left(\int_{a_n}^{b_n} w_3^{-p/(p-1)}w_3 \right)^{1/p}
\le  c  \, n \, \nu_2([a_n,b])^{1/p}
+ c \, n^{1/p} \,.
%\end{aligned}
$$
Since $\nu_2([b_0,b]) < \infty$, we deduce that $\dyle\lim_{n\to\infty} \nu_2([a_n,b])=\nu_2(\{b\})=0$.
Hence, there exists some $n_0\in \mathbb{N}$ such that $c \,\nu_2([a_n,b])^{1/p} \le 1/2$,  $\forall\, n\ge n_0$.
Therefore, $n\le  2  \,  c \, n^{1/p}$, $\forall\, n\ge n_0$, which is a contradiction since $1<p<\infty$.
Then the conclusion holds if
$w_3^{-1/(p-1)} \in L^{1}([b_0,r])$ for every $r\in (b_0,b)$.

\medskip

We deal now with the general case.
Since $w_3^{-1/(p-1)} \notin L^{1}([r,b])$ for every $r\in (b_0,b)$,
we can choose an increasing sequence $\{c_n\}\subset (b_0,b)$ with $\dyle\lim_{n\to\infty} c_n= b$
and $\int_{c_{n-1}}^{c_{n}} w_3^{-1/(p-1)} \in [1,\infty]$ for every $n$.
Let $\{n_j\}$ be the ordered set of indices $n$ with $\int_{c_{n-1}}^{c_{n}} w_3^{-1/(p-1)} = \infty$, if any.
For each $j$, let us choose $\e_j\ge 0$ verifying
$$
1 \le \int_{c_{n_j-1}}^{c_{n_j}} \max\left\{w_3, \, \e_j \right\}^{-1/(p-1)} < \infty \,.
$$
If $\{n_j\} = \emptyset$, then we define $\overline{w}_3:=w_3$.
Otherwise, we define
$$
\overline{w}_3:=
\left\{
\begin{aligned}
w_3\,, \quad & \text{ on } [c_{n-1},c_{n}] \text{ if } n \notin \{n_j\} \,,
\\
\max\left\{w_3, \, \e_j \right\}\,, \quad & \text{ on } (c_{n_j-1},c_{n_j}) \text{ if } n = n_j \,,
\end{aligned}
\right.
$$
and $\overline{\nu}_3$ by $d\overline{\nu}_3 := \overline{w}_3 \, dx + d(\nu_3)_s$.

Note that
$$
\int_{c_{n-1}}^{c_{n}} \overline{w}_3^{-1/(p-1)} < \infty
\quad \text{ $\forall n$, and moreover} \quad
\int_{b_0}^{b} \overline{w}_3^{-1/(p-1)}
\ge \sum_{n \notin \{n_j\}} \int_{c_{n-1}}^{c_{n}} w_3^{-1/(p-1)} + \mbox{card}\, \{n_j\}
\,.
$$
As a consequence, $\overline{w}_3^{-1/(p-1)} \in L^{1}([b_0,r])$ and $\overline{w}_3^{-1/(p-1)} \notin L^{1}([r,b])$ for any $r\in (b_0,b)$.
Therefore, we have proved that \eqref{eq:mc3} is not satisfied with $\overline{\nu}_3$ instead of $\nu_3$.
Since $\nu_3 \le \overline{\nu}_3$, the conclusion holds for $\nu_3$.
\end{proof}

%%%%%%%%%%%%%%%%%%%%%%%%%%%%%%%%%%%%%%%%%%%%%%%%%%%%%%%%%%%%%%%%%%%%%%%%%%%%%%%%%%%%%%%%%%%%%%%
\medskip

\section{Application to Sobolev orthogonal polynomials}\label{quinta}

We start with the introduction of the concept of regular points, which will be the basis of the results of this section.

\medskip

\begin{definition}
Let $\mu_1$ be a measure on $[a,b]$.
If $w_1^{-1/(p-1)}\in L^1([a+\e,b-\e])$ for every $0<\e<(b-a)/2$, then we define the interval
of regular points $Reg([a,b])$ as follows:

$(1)$ In the case $w_1^{-1/(p-1)}\in L^{1}([a,b])$, then
$Reg([a,b])=[a,b]$.

\medskip

Moreover, if there exists $\e>0$ such that:

\medskip

$(2)$  $w_1^{-1/(p-1)}\in L^{1}([a,a+\e])$ and $w_1^{-1/(p-1)}\notin L^{1}([b-\e,b])$, then
$Reg([a,b])=[a,b)$.

$(3)$ $w_1^{-1/(p-1)}\notin L^{1}([a,a+\e])$ and $w_1^{-1/(p-1)}\in L^{1}([b-\e,b])$, then
$Reg([a,b])=(a,b]$.

$(4)$ $w_1^{-1/(p-1)}\notin L^1([a,a+\e])$ and $w_1^{-1/(p-1)}\notin L^1([b-\e,b])$, then
$Reg([a,b])=(a,b)$.
\end{definition}

\medskip

The concept of $Reg([a,b])$ is natural, as the following results show.

\medskip

\begin{theorem} \label{t:subinterval}
Let  $\mu_0, \mu_1$ be finite measures on $[a,b]$.
Assume that $w_1^{-1/(p-1)}\in L^{1}([a+\e,b-\e])$ for every $0<\e<(b-a)/2$.

\smallskip

$(1)$ In the case $Reg([a,b])= [a,b]$, then the MO  is bounded in $\PP^{1,p}(\mu_0, \mu_1)$
iff $\mu_0\left( [a,b] \right)>0$.

\smallskip

$(2)$ If $Reg([a,b])=[a,b)$, we assume also that
$(w_1, w_0)\in\mathfrak{C}_b([a,b])$
and $(\mu_0)_s([b-\e,b])=(\mu_1)_s([b-\e,b])=0$ for some $\e>0$.
Then the MO is bounded in $\PP^{1,p}(\mu_0, \mu_1)$
iff $\mu_0\left( [a,b] \right)>0$ and
\begin{equation} \label{eq:a02}
\Lambda_{p,[a,b],b} \left( (\mu_1-k\mu_0)_+,\,\mu_1 \right)
< \infty\,,
\end{equation}
for some constant $k$.

\smallskip

$(3)$ For $Reg([a,b])= (a,b]$, we assume also that
$(w_1, w_0)\in\mathfrak{C}_a([a,b])$
with $(\mu_0)_s([a,a+\e])=(\mu_1)_s([a,a+\e])=0$ for some $\e>0$. Then the MO is bounded in $\PP^{1,p}(\mu_0, \mu_1)$
iff $\mu_0\left( [a,b] \right)>0$ and
\begin{equation} \label{eq:a03}
\Lambda_{p,[a,b],a} \left( (\mu_1-k\mu_0)_+,\,\mu_1 \right)
< \infty\,,
\end{equation}
for some constant $k$.

\smallskip

$(4)$ When $Reg([a,b])= (a,b)$, we assume also that
$(w_1, w_0)\in\mathfrak{C}_a([a,b]) \cap\mathfrak{C}_b([a,b])$,
and $(\mu_0)_s([a,a+\e])=(\mu_1)_s([a,a+\e])=(\mu_0)_s([b-\e,b])=(\mu_1)_s([b-\e,b])=0$ for some $\e>0$.
Let us fix $x_0\in (a,b)$.
Then the MO is bounded in $\PP^{1,p}(\mu_0, \mu_1)$
iff $\mu_0\left( [a,b] \right)>0$ and
\begin{equation} \label{eq:a04}
\Lambda_{p,[a,x_0],a} \left( (\mu_1-k\mu_0)_+,\,\mu_1 \right)< \infty\,,
\quad
\Lambda_{p,[x_0,b],b} \left( (\mu_1-k\mu_0)_+,\,\mu_1 \right)
< \infty\,,
\end{equation}
for some constant $k$.
\end{theorem}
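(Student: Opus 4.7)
The plan is to reduce the boundedness of $M$ on $\PP^{1,p}(\mu_0,\mu_1)$ to the scalar inequality \eqref{maininequality00}, namely
\[
\|f\|_{L^p(\mu_1)} \le c\bigl(\|f\|_{L^p(\mu_0)} + \|f'\|_{L^p(\mu_1)}\bigr),\qquad f\in\PP,
\]
whose equivalence with the boundedness of $M$ is already recorded in the introduction. Testing on the constant polynomial $f\equiv 1$ forces $\mu_1([a,b])^{1/p}\le c\,\mu_0([a,b])^{1/p}$, so (excluding the trivial case $\mu_1\equiv 0$) the condition $\mu_0([a,b])>0$ is necessary in every case. For sufficiency, I decompose $f(x)=f(a)+\int_a^x f'(t)\,dt$ and split the task into two pieces: the Muckenhoupt-type estimate
\[
\Bigl\|\dint_a^x f'(t)\,dt\Bigr\|_{L^p(\mu_1)} \le c\Bigl(\Bigl\|\dint_a^x f'(t)\,dt\Bigr\|_{L^p(\mu_0)}+\|f'\|_{L^p(\mu_1)}\Bigr),\qquad f\in\PP,
\]
and a Poincar\'e-type bound $|f(a)|\le C\bigl(\|f\|_{L^p(\mu_0)}+\|f'\|_{L^p(\mu_1)}\bigr)$. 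Combining these via the triangle inequality recovers the scalar estimate, since $\|f-f(a)\|_{L^p(\mu_0)}$ can be controlled by $\|f\|_{L^p(\mu_0)}$ plus $|f(a)|\mu_0([a,b])^{1/p}$.

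The Muckenhoupt-type estimate is handled case by case using Section \ref{tercera}. I first invoke Theorem \ref{l:polMuckenhoupt} to promote the polynomial version to the general version for every $f\in\cM([a,b])$; its hypotheses are available in cases $(1)$ and $(2)$, and in cases $(3)$ and $(4)$ after reflection or splitting. In case $(1)$, $w_1^{-1/(p-1)}\in L^1([a,b])$ gives $\bigl|\int_a^x f'\bigr|\le C\|f'\|_{L^p(\mu_1)}$ by H\"older, so the inequality is automatic. In case $(2)$ the hypotheses of Corollary \ref{c:iff} are exactly those of case $(2)$: $\mu_1$ finite, $w_1^{-1/(p-1)}\in L^1([a,r])$ for every $r<b$, $(w_1,w_0)\in\mathfrak{C}_b([a,b])$, and $(\mu_0)_s,\,(\mu_1)_s$ vanishing near $b$; applied with $(\nu_1,\nu_2,\nu_3)=(\mu_1,\mu_0,\mu_1)$ it yields precisely the equivalence with \eqref{eq:a02}. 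Case $(3)$ is the mirror image of case $(2)$ under $x\mapsto a+b-x$. In case $(4)$ I fix $x_0\in(a,b)$, split $[a,b]=[a,x_0]\cup[x_0,b]$, and apply the $x_0$-based analogues of cases $(3)$ and $(2)$ to the left and right halves; the two conditions in \eqref{eq:a04} are exactly what these sub-arguments need.

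For the Poincar\'e bound on $|f(a)|$ (or $|f(x_0)|$ in case $(4)$), the hypothesis $\mu_0([a,b])>0$ together with the vanishing of $(\mu_0)_s$ near the relevant endpoint(s) forces $\mu_0([a,b-\e_1])>0$ for some $\e_1>0$ in cases $(1)$--$(2)$, or $\mu_0([a+\e_0,b-\e_0])>0$ for some $\e_0>0$ in case $(4)$. I then pick $x_1$ in this set realising the minimum of $|f|$ there, giving $|f(x_1)|\le\mu_0(\cdot)^{-1/p}\|f\|_{L^p(\mu_0)}$; H\"older applied to $f'$ on the interval between $a$ (respectively $x_0$) and $x_1$, legal since $w_1^{-1/(p-1)}$ is $L^1$ there, gives $|f(a)-f(x_1)|\le C\|f'\|_{L^p(\mu_1)}$; the triangle inequality then finishes the Poincar\'e bound.

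The main obstacle I anticipate is the bookkeeping in case $(4)$: I must check that Corollary \ref{c:iff}, stated with base point $a$, transports correctly to its $x_0$-based analogue on each half of the interval, and that the two sub-interval estimates can be glued to form the global estimate on $[a,b]$. The possible point mass of $\mu_1$ at $x_0$ arising from the splitting is the only subtle issue, but it can be absorbed into either half and is harmless thanks to the comparison \eqref{comp-Lambdas} between $\Lambda_{p,b}$ and $\Lambda'_{p,b}$.
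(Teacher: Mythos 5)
Your proposal is correct and follows essentially the same route as the paper: the reduction to the scalar inequality \eqref{maininequality00}, the decomposition into a Muckenhoupt-type estimate (handled via Proposition \ref{p:first}, Theorem \ref{l:polMuckenhoupt} and Corollary \ref{c:iff}) plus a Poincar\'e-type bound on $|f(a)|$ or $|f(x_0)|$ (the paper's Lemma \ref{l:bp}), and the splitting at $x_0$ with the approximation of $f I_{[x_0,b]}$ by polynomials in case $(4)$ (the paper's Lemma \ref{l:last} and the argument around \eqref{eq:a3}). The subtlety you flag in case $(4)$ is exactly the one the paper addresses, and your handling of it is adequate.
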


\medskip

\begin{remark}
Note that the hypotheses in Theorem \ref{t:subinterval}
imply $\mu_0\left( [a,b] \setminus Reg([a,b]) \right)=\mu_1\left( [a,b] \setminus Reg([a,b]) \right)=0$, i.e.,
$\mu_0|_{ Reg([a,b]) }=\mu_0$ and $\mu_1|_{ Reg([a,b]) }=\mu_1$.

We need finite measures since it is necessary to have
$\|g\|_{W^{1,p}([a,b],(\mu_0, \mu_1))} < \infty$, $\forall g\in \mathbb{P}$.

We also want to point  out that in \cite{CD} the compactness of the supports of the measures is
a necessary condition in order to have the boundedness of the MO.
\end{remark}

\medskip

Let us first prove the following lemmas, which will be useful in the proof of Theorem \ref{t:subinterval}.

\

\begin{lemma} \label{l:bp}
Let  $\mu_0, \mu_1$ be measures on $[a,b]$.
Assume that $0<\mu_0([a,b])<\infty$ and $w_1^{-1/(p-1)}\in L^{1}([a,b])$.
Then:

\smallskip

$(1)$ There exists a positive constant $c_1$ such that  $\forall\, c \in \mathbb{R}$, and all $ f \in \cM ([a,b])$:
$$
\left\|c+ \int_a^x f(t)\, dt \right\|_{L^\infty([a,b])}\le
c_1 \, \left(\left\| c+ \int_a^x f(t)\, dt \right\|_{L^p([a,b],\mu_0)}+\left\|f\right\|_{L^p([a,b],\mu_1)} \right).
$$

$(2)$ If $\mu_1$ is finite, then there exists a positive constant $c_2$ such that $\forall\, c \in \mathbb{R}$,
and all $ f \in \cM ([a,b])$ we have:
$$
\left\| c+ \int_a^x f(t)\, dt \right\|_{L^p([a,b],\mu_1)}\le
c_2 \, \left(\left\| c+ \int_a^x f(t)\, dt \right\|_{L^p([a,b],\mu_0)}+\left\|f\right\|_{L^p([a,b],\mu_1)} \right).
$$
\end{lemma}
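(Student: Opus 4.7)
The plan is to prove (1) by showing that when the right-hand side is finite, the function $F(x) := c + \int_a^x f(t)\,dt$ is continuous with uniformly bounded oscillation controlled by $\|f\|_{L^p([a,b],\mu_1)}$, and then argue that it must attain a small value somewhere using the $L^p(\mu_0)$ bound. Part (2) will then be an immediate corollary of (1) since $\mu_1$ is finite.

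For (1), I would begin by observing that if $\|f\|_{L^p([a,b],\mu_1)}=\infty$ the inequality holds trivially, so assume it is finite. Using Hölder's inequality and the hypothesis $w_1^{-1/(p-1)}\in L^1([a,b])$, I get, for any $x,y\in[a,b]$,
$$
|F(x)-F(y)|\le\left|\int_y^x|f(t)|\,dt\right|\le\left\|w_1^{-1/(p-1)}\right\|_{L^1([a,b])}^{(p-1)/p}\|f\|_{L^p([a,b],\mu_1)}=:K\,\|f\|_{L^p([a,b],\mu_1)}.
$$
In particular, $F$ is (absolutely) continuous on $[a,b]$ and bounded, so $F\in L^p([a,b],\mu_0)$ since $\mu_0$ is finite. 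Since $\mu_0([a,b])>0$ and $|F|$ is continuous on the compact set $[a,b]$, there exists $x_0\in[a,b]$ realizing $\min_{[a,b]}|F|$, whence
$$
|F(x_0)|^p\,\mu_0([a,b])\le\int_a^b|F|^p\,d\mu_0=\|F\|_{L^p([a,b],\mu_0)}^p,
$$
giving $|F(x_0)|\le\mu_0([a,b])^{-1/p}\,\|F\|_{L^p([a,b],\mu_0)}$. Combining with the oscillation bound,
$$
|F(x)|\le|F(x_0)|+|F(x)-F(x_0)|\le\mu_0([a,b])^{-1/p}\|F\|_{L^p([a,b],\mu_0)}+K\|f\|_{L^p([a,b],\mu_1)},
$$
for all $x\in[a,b]$, which yields (1) with $c_1:=\max\{\mu_0([a,b])^{-1/p},\,K\}$.

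For (2), since $\mu_1([a,b])<\infty$, I use the trivial estimate $\|F\|_{L^p([a,b],\mu_1)}\le\mu_1([a,b])^{1/p}\|F\|_{L^\infty([a,b])}$ together with part (1) to obtain
$$
\|F\|_{L^p([a,b],\mu_1)}\le c_1\mu_1([a,b])^{1/p}\left(\|F\|_{L^p([a,b],\mu_0)}+\|f\|_{L^p([a,b],\mu_1)}\right),
$$
so we can take $c_2:=c_1\,\mu_1([a,b])^{1/p}$.

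The only subtle point is the use of $\mu_0([a,b])>0$ to convert an integral bound into a pointwise bound at the minimizer $x_0$; this is where the hypothesis $0<\mu_0([a,b])$ is essential. The remaining ingredients (Hölder, continuity of $F$, boundedness of $F$ on a compact interval) are routine, so I do not expect any serious obstacle beyond keeping track of the exceptional cases where one side is infinite.
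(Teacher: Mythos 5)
Your proof is correct and follows essentially the same route as the paper: both arguments use H\"older's inequality with the hypothesis $w_1^{-1/(p-1)}\in L^{1}([a,b])$ to control the oscillation of $F(x)=c+\int_a^x f$ by $\|f\|_{L^p([a,b],\mu_1)}$, and then use $\mu_0([a,b])>0$ to turn the $L^p(\mu_0)$ norm of $F$ into a pointwise bound (the paper integrates the pointwise inequality in $x$ against $\mu_0$ rather than evaluating at a minimizer, a purely cosmetic difference). Part $(2)$ is deduced from $(1)$ exactly as in the paper.
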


\medskip

\begin{proof}
We just need to prove $(1)$, since $(2)$ is a direct consequence of $(1)$.

Let us fix $x_0\in [a,b]$.
For any $x\in [a,b]$ and $f \in \cM ([a,b])$, using H\"older inequality
$$
\begin{aligned}
\left| c+ \int_a^{x_0} f(t)\, dt \right|
&
\le \left| c+ \int_a^{x} f(t)\, dt \right|+\int_a^b \left|f\right|w_1^{1/p}w_1^{-1/p}
\\
& \le \left| c+ \int_a^{x} f(t)\, dt \right| + \left\|f\right\|_{L^p([a,b],w_1)}\|w_1^{-1/(p-1)}\|_{L^{1}([a,b])}^{\frac{p-1}{p}}
\\
& = \left| c+ \int_a^{x} f(t)\, dt \right|+c_3 \left\|f\right\|_{L^p([a,b],w_1)}\,,
\\
\left| c+ \int_a^{x_0} f(t)\, dt \right|^p
& \le 2^{p-1} \left(\left| c+ \int_a^{x} f(t)\, dt \right|^p+c_3^p \left\|f\right\|_{L^p([a,b],\mu_1)}^p\right)\,.
\end{aligned}
$$

Since $0<\mu_0([a,b]) < \infty$,
we can integrate in the x-variable on $[a,b]$ with respect to $\mu_0$ in order to find a constant $c_4>0$ such
 that for every $c\in\mathbb{R},$
$$
\left| c+ \int_a^{x_0} f(t)\, dt \right|^p\le c_4\,\left(
\left\| c+ \int_a^x f(t)\, dt \right\|_{L^p([a,b],\mu_0)}^p +\left\|f\right\|_{L^p([a,b],\mu_1)}^p \right), \quad \forall f\in \cM([a,b])
$$
 and all $x_0\in [a,b]$. Therefore, we conclude that for all $c\in \mathbb{R}$
$$
\left\|c+ \int_a^x f(t)\, dt \right\|_{L^\infty([a,b])}\le
c_1 \, \left(\left\| c+ \int_a^x f(t)\, dt \right\|_{L^p([a,b],\mu_0)}+\left\|f\right\|_{L^p([a,b],\mu_1)} \right),
\quad \forall\, f \in \cM ([a,b]).
$$
\end{proof}

\medskip

\begin{lemma} \label{l:last}
Let  $\mu_0, \mu_1$ be finite measures on $[a,b]$.
Assume that $w_1^{-1/(p-1)}\in L^{1}([a+\e,b-\e])$ for every $0<\e<(b-a)/2$ and that $Reg([a,b])= (a,b)$.
Let us fix $x_0\in (a,b)$.
If $\mu_0\left( (a,b) \right)>0$ and
\begin{equation} \label{eq:a04bis}
\Lambda_{p,[a,x_0],a} \left( (\mu_1-k\mu_0)_+,\,\mu_1 \right)< \infty\,,
\quad
\Lambda_{p,[x_0,b],b} \left( (\mu_1-k\mu_0)_+,\,\mu_1 \right)
< \infty\,,
\end{equation}
for some constant $k$, then the MO is bounded in $\PP^{1,p}(\mu_0, \mu_1)$.
\end{lemma}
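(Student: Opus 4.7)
The plan is to reduce the boundedness of the multiplication operator $Mf(z)=zf(z)$ on $\PP^{1,p}(\mu_0,\mu_1)$ (for which, by density of polynomials, it suffices to prove a bound for polynomials) to the single inequality
\begin{equation}\label{eq:key-plan}
\|f\|_{L^p([a,b],\mu_1)} \le C \left( \|f\|_{L^p([a,b],\mu_0)} + \|f'\|_{L^p([a,b],\mu_1)} \right),
\end{equation}
valid for every polynomial $f$. This reduction is immediate because $(zf)' = f + zf'$ and $|z|\le\max\{|a|,|b|\}$ on $[a,b]$, so $\|Mf\|_{W^{1,p}(\mu_0,\mu_1)}$ is controlled by $\|f\|_{L^p(\mu_0)} + \|f\|_{L^p(\mu_1)} + \|f'\|_{L^p(\mu_1)}$; \eqref{eq:key-plan} is exactly what is needed to absorb the middle term into the $W^{1,p}$ norm of $f$.

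To produce \eqref{eq:key-plan}, I split the interval at $x_0$ and apply Proposition \ref{p:first} on $[x_0,b]$ with $(\nu_1,\nu_2,\nu_3)=(\mu_1,\mu_0,\mu_1)$, using the hypothesis $\Lambda_{p,[x_0,b],b}((\mu_1-k\mu_0)_+,\mu_1)<\iy$ from \eqref{eq:a04bis}. Feeding in $f'$ as the integrand and $x_0$ as the base point turns $\int_{x_0}^x f'=f(x)-f(x_0)$, yielding
$$
\|f-f(x_0)\|_{L^p([x_0,b],\mu_1)} \le c \bigl( \|f-f(x_0)\|_{L^p([x_0,b],\mu_0)} + \|f'\|_{L^p([x_0,b],\mu_1)} \bigr).
$$
The symmetric statement of Proposition \ref{p:first} (noted after Theorem \ref{t:Muckenhoupt}) gives the analogous inequality on $[a,x_0]$ from the other half of \eqref{eq:a04bis}. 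Adding the $p$-th powers produces the same estimate over $[a,b]$.

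What remains is to control $|f(x_0)|$. Since $\mu_0((a,b))>0$ and $w_1^{-1/(p-1)}\in L^1([a+\e,b-\e])$ for every small $\e>0$, I pick $\e>0$ small enough that $x_0\in[a+\e,b-\e]$ and $\mu_0([a+\e,b-\e])>0$ (possible by continuity of $\mu_0$ from below). Applying Lemma \ref{l:bp}$(1)$ on $[a+\e,b-\e]$ to $f(x)=f(a+\e)+\int_{a+\e}^x f'(t)\,dt$ gives
$$
|f(x_0)| \le \|f\|_{L^\infty([a+\e,b-\e])} \le c_1 \bigl( \|f\|_{L^p([a,b],\mu_0)} + \|f'\|_{L^p([a,b],\mu_1)} \bigr),
$$
and combining this with the previous display (together with $\mu_1([a,b])<\iy$, which gives $|f(x_0)|\,\mu_1([a,b])^{1/p}$ a constant multiple bound) yields \eqref{eq:key-plan}.

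The main obstacle is really a structural one rather than a computational one: Proposition \ref{p:first} only applies when one can integrate from a point at which $w_3^{-1/(p-1)}$ is locally integrable, and the assumption $Reg([a,b])=(a,b)$ precludes using either endpoint as that base point. The trick is to pivot at $x_0$ and run Proposition \ref{p:first} twice; the hypothesis $\mu_0((a,b))>0$ is exactly what allows Lemma \ref{l:bp} to anchor $f(x_0)$ to the right-hand side of \eqref{eq:key-plan} on an interior subinterval where $w_1^{-1/(p-1)}$ is integrable.
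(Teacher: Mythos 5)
Your proposal is correct and follows essentially the same route as the paper: apply Proposition \ref{p:first} separately on $[a,x_0]$ and $[x_0,b]$ with pivot $x_0$ (using the two halves of \eqref{eq:a04bis}), then use Lemma \ref{l:bp} on an interior subinterval of positive $\mu_0$-measure where $w_1^{-1/(p-1)}$ is integrable to absorb the $|f(x_0)|$ term, and conclude via the equivalence \eqref{maininequality00}. The only cosmetic difference is that the paper chooses a generic subinterval $[a_0,b_0]\ni x_0$ with $\mu_0([a_0,b_0])>0$ rather than one of the form $[a+\e,b-\e]$, which changes nothing.
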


\medskip

\begin{proof}
Proposition \ref{p:first} proves that there exists a constant $c$ such that
$$
\begin{aligned}
\left\|  \dint_{x_0}^x f(t) \, dt \right\|_{L^p([x_0,b], \mu_1)}
& \le
c \left( \left\|  \dint_{x_0}^x f(t) \, dt \right\|_{L^p([x_0,b], \mu_0)} +
\left\|  f \right\|_{L^p([x_0,b], \mu_1)} \right),
\\
\left\|  \dint_{x_0}^x f(t) \, dt \right\|_{L^p([a,x_0], \mu_1)}
& \le
c \left( \left\|  \dint_{x_0}^x f(t) \, dt \right\|_{L^p([a,x_0], \mu_0)} +
\left\|  f \right\|_{L^p([a,x_0], \mu_1)} \right) ,
\end{aligned}
$$
for all $f\in\cM ([a,b])$.
Then
$$
\begin{aligned}
\left\| g - g(x_0) \right\|_{L^p([x_0,b], \mu_1)}
& \le
c \left( \left\| g - g(x_0) \right\|_{L^p([x_0,b], \mu_0)} +
\left\|  g' \right\|_{L^p([x_0,b], \mu_1)} \right) ,
\\
\left\| g - g(x_0) \right\|_{L^p([a,x_0], \mu_1)}
& \le
c \left( \left\| g - g(x_0) \right\|_{L^p([a,x_0], \mu_0)} +
\left\|  g' \right\|_{L^p([a,x_0], \mu_1)} \right) ,
\end{aligned}
$$
are satisfied for every  $g\in \mathbb{P}$.
Consequently,
\begin{equation} \label{eq:a6}
\begin{aligned}
 \left\| g  \right\|_{L^p([x_0,b], \mu_1)}
& \le
c \left(  \left\| g  \right\|_{L^p([x_0,b], \mu_0)} +
 \left\|  g'  \right\|_{L^p([x_0,b], \mu_1)} \right) +
 \left| g(x_0)  \right|\left( \mu_1([x_0,b])^{1/p} + c\, \mu_0([x_0,b])^{1/p} \right) ,
\\
 \left\| g \right\|_{L^p([a,x_0], \mu_1)}
& \le
c \left(  \left\| g  \right\|_{L^p([a,x_0], \mu_0)} +
 \left\|  g'  \right\|_{L^p([a,x_0], \mu_1)} \right) +
 \left| g(x_0)  \right|\left( \mu_1([a,x_0])^{1/p} + c\, \mu_0([a,x_0])^{1/p} \right) ,
\end{aligned}
\end{equation}
hold for any  $g\in \mathbb{P}$.

Since $\mu_0\left( (a,b) \right) >0$, there exist $a<a_0<b_0<b$ with $x_0 \in [a_0,b_0]$ and $\mu_0\left( [a_0,b_0] \right)>0$.
Taking into account that $w_1^{-1/(p-1)}\in L^{1}([a_0,b_0])$, by
Lemma \ref{l:bp}
there exists a  constant $c_1>0$ such that
$$
\left|g(x_0)\right|\le
c_1 \, \left(\left\|g\right\|_{L^p([a_0,b_0],\mu_0)}+\left\|g'\right\|_{L^p([a_0,b_0],\mu_1)} \right),\quad \forall \, g\in\mathbb{P}.
$$

By this inequality and \eqref{eq:a6}, there exists a  constant $c_2>0$ such that
$$
\left\|  g\right\|_{L^p( \mu_1)}
\le
c_2 \left( \left\| g\right\|_{L^p( \mu_0)} +
\left\|  g' \right\|_{L^p( \mu_1)} \right),\quad\forall\, g\in\mathbb{P},
$$
as a consequence, the MO is bounded in $\PP^{1,p}(\mu_0, \mu_1)$ by \eqref{maininequality00}.
\end{proof}

\medskip

\begin{proof}[\it Proof of Theorem \ref{t:subinterval}]

\

$\bullet$ We prove first that if $\mu_0\left( [a,b] \right)=0$, then
the MO is not bounded in $\PP^{1,p}(\mu_0, \mu_1)$.
By contradiction, let us suppose that the MO is bounded in $\PP^{1,p}(\mu_0, \mu_1)$.
Therefore  by \eqref{maininequality00} there exists a constant  $c>0$ such that
$$
\left\|f\right\|_{L^p(\mu_1)} \le c \left( \left\|f\right\|_{L^{p}(\mu_0)} + \left\|f' \right\|_{L^{p}(\mu_1)}  \right), \quad\forall\, f\in\mathbb{P}.
$$
Taking $f=1$, we obtain
$$
\mu_1\left( [a,b] \right)
%=\right\|1\right\|_{L^p(\mu_1)}
\le c^p \mu_0\left([a,b]\right)=0\, ,
%\left( \right\|1\right\|_{L^{p}(\mu_0)} + \right\|0 \right\|_{L^{p}(\mu_1)}  \right)=0\,,
$$
then we conclude $\mu_1\left( [a,b] \right)=0$, which is a contradiction with $w_1^{-1/(p-1)}\in L^1([a+\e,b-\e])$ for every $0<\e<(b-a)/2$,
and we deduce that the MO is not bounded in $\PP^{1,p}(\mu_0, \mu_1)$.

% L^{1/(p-1)} feo
%^{-1(p-1)}

\smallskip

In order to prove part $(1)$ we assume that $\mu_0\left( [a,b] \right)>0$.
Since $w_1^{-1/(p-1)}\in L^{1}([a,b])$, by Lemma \ref{l:bp}
 there exists a  constant $c_2>0$ such that
$$
\left\|g\right\|_{L^p(\mu_1)}\le
c_2 \, \left(\left\|g\right\|_{L^p(\mu_0)}+\left\|g'\right\|_{L^p(\mu_1)} \right),\quad\forall\, g\in\mathbb{P}.
$$
Therefore the MO is bounded in $\PP^{1,p}(\mu_0, \mu_1)$.

\medskip

$\bullet$ In order to prove part $(2)$, we assume that
the MO is bounded in $\PP^{1,p}(\mu_0, \mu_1)$.
Then, there exists a constant $c>0$ such that
\begin{equation} \label{eq:a0}
\left\|  g\right\|_{L^p( \mu_1)}
\le
c \left( \left\| g\right\|_{L^p( \mu_0)} +
\left\|  g' \right\|_{L^p( \mu_1)} \right),\quad\forall\, g\in\mathbb{P}.
\end{equation}
In particular we have
\begin{equation} \label{eq:a1}
\left\|  \dint_{a}^x f(t) \, dt \right\|_{L^p( \mu_1)}
\le
c \left( \left\|  \dint_{a}^x f(t) \, dt \right\|_{L^p( \mu_0)} +
\left\|  f \right\|_{L^p( \mu_1)} \right),\quad\forall\, f\in\mathbb{P}.
\end{equation}
By Theorem \ref{l:polMuckenhoupt} we know that \eqref{eq:a1}
holds for all $f\in\cM ([a,b])$.
Hence, applying Corollary \ref{c:iff} we obtain
\eqref{eq:a02} for some constant $k$.

\smallskip

Let's assume now that $\mu_0\left( [a,b] \right)>0$ and that
\eqref{eq:a02} holds for some constant $k$.
By Proposition \ref{p:first} there exists a constant $c>0$ such that
$$
\left\|  \dint_a^{x} f(t) \, dt \right\|_{L^p( \mu_1)}
\le c \left( \left\|  \dint_a^{x} f(t) \, dt \right\|_{L^p( \mu_0)} +
\left\| f\right\|_{L^p( \mu_1)} \right),\quad \forall\, f\in\cM ([a,b]).
$$
Then
$$
\left\| g - g(a) \right\|_{L^p( \mu_1)}
\le
c \left( \left\| g - g(a) \right\|_{L^p( \mu_0)} +
\left\| g'\right\|_{L^p( \mu_1)} \right),\quad\forall\, g\in\mathbb{P}.
$$
Consequently,
\begin{equation} \label{eq:a5}
\left\| g\right\|_{L^p( \mu_1)}
\le
c \left( \left\| g \right\|_{L^p( \mu_0)} +
\left\|  g' \right\|_{L^p( \mu_1)} \right) +
\left| g(a) \right|\left( \mu_1([a,b])^{1/p} + c\, \mu_0([a,b])^{1/p} \right),\quad\forall\, g\in\mathbb{P}.
\end{equation}
Since $\mu_0\left( [a,b) \right)>0$, there exists $a<b_0<b$ with $\mu_0\left( [a,b_0] \right)>0$.
Taking into account that $w_1^{-1/(p-1)}\in L^{1}([a,b_0])$, by
Lemma \ref{l:bp} there exists a constant $c_1>0$ such that
$$
\left|g(a)\right|\le
c_1 \left(\left\|g\right\|_{L^p([a,b_0],\mu_0)}+\left\|g'\right\|_{L^p([a,b_0],\mu_1)} \right),\quad\forall\, g\in\mathbb{P}.
$$
This inequality jointly with \eqref{eq:a5} give \eqref{eq:a0}, and then
the MO is bounded in $\PP^{1,p}(\mu_0, \mu_1)$.

\medskip

A similar  argument  to the one in part $(2)$ allows us to prove part $(3)$.

\medskip

$\bullet$ Finally, let us prove part $(4)$.
Fix $x_0\in (a,b)$.
Assume first that the MO is bounded in $\PP^{1,p}(\mu_0, \mu_1)$.
Then \eqref{eq:a0} holds for every polynomial.
In particular,
\begin{equation} \label{eq:a30}
\left\|  \dint_{x_0}^x f(t) \, dt \right\|_{L^p(\mu_1)}
\le
c \left( \left\|  \dint_{x_0}^x f(t) \, dt \right\|_{L^p( \mu_0)} +
\left\| f\right\|_{L^p( \mu_1)} \right),\quad\forall\, f\in\mathbb{P}.
\end{equation}
We are going to prove that
\begin{equation} \label{eq:a3}
\begin{aligned}
\left\|  \dint_{x_0}^x f(t) \, dt \right\|_{L^p([x_0,b], \mu_1)}
& \le
c \left( \left\|  \dint_{x_0}^x f(t) \, dt \right\|_{L^p([x_0,b], \mu_0)} +
\left\|  f \right\|_{L^p([x_0,b], \mu_1)} \right)
\\
\left\|  \dint_{x_0}^x f(t) \, dt \right\|_{L^p([a,x_0], \mu_1)}
& \le
c \left( \left\|  \dint_{x_0}^x f(t) \, dt \right\|_{L^p([a,x_0], \mu_0)} +
\left\|  f \right\|_{L^p([a,x_0], \mu_1)} \right)
\end{aligned}
\end{equation}
hold for every $f\in\mathbb{P}$.
By symmetry, it suffices to prove the first inequality.
for $f\in \mathbb{P}$ and $\e>0$, by the density of the continuous functions in $L^p$, there exists a function $h_0\in \mathcal{C}([a,b])$
with
$$
\left\| fI_{[x_0,b]}-h_0 \right\|_{L^p(\mu_0+\mu_1)} + \left\| fI_{[x_0,b]}-h_0 \right\|_{L^1([a,b])} < \e\,.
$$
Weierstrass' Theorem provides a polynomial $h$ with $\| h_0-h \|_{L^\infty([a,b])} < \e$.
Let us define the constant $c_0:=(\mu_0+\mu_1)([a,b])^{1/p}$.
We have
$$
\begin{aligned}
\left\| h_0-h \right\|_{L^p(\mu_0+\mu_1)}
& + \left\| h_0-h \right\|_{L^1([a,b])}
< \e\, c_0 + \e\, (b-a)
\,,
\\
\left\| fI_{[x_0,b]}-h \right\|_{L^p(\mu_0+\mu_1)}
& + \left\| fI_{[x_0,b]}-h \right\|_{L^1([a,b])}
< (c_0+b-a+1)\, \e
\,,
\\
\left\|  \dint_{x_0}^x f(t) \, dt \, I_{[x_0,b]}(x)- \dint_{x_0}^x h(t) \, dt \right\|_{L^p(\mu_i)}
& = \left\|  \dint_{x_0}^x f(t)I_{[x_0,b]}(t) \, dt - \dint_{x_0}^x h(t) \, dt \right\|_{L^p(\mu_i)}
\\
& \le
\left\|  fI_{[x_0,b]}-h \right\|_{L^1([a,b])} \mu_i([a,b])^{1/p}
< c_0\,(c_0+b-a+1)\, \e\,.
\end{aligned}
$$
Since $\e>0$ is arbitrary, these inequalities and \eqref{eq:a30} prove the first one in \eqref{eq:a3}.

\smallskip

Moreover, taking into account that the inequalities \eqref{eq:a3} hold for all polynomials, then by
Theorem \ref{l:polMuckenhoupt}  both inequalities in  \eqref{eq:a3}
hold for all  $f\in \cM ([a,b])$.
Hence, by Corollary \ref{c:iff} we obtain
\eqref{eq:a04} for some constant $k$.

\smallskip

If we assume that \eqref{eq:a04} holds, then by Lemma \ref{l:last} we conclude.
\end{proof}

\medskip

Now, we consider a new kind of measures.

\medskip

\begin{definition} \label{d:piecewiseregular}
Let  $\mu_1$ be a measure on $\RR$.
We say that $\mu_1$ is \emph{piecewise regular} if there exist real numbers $a_0<a_1<\cdots<a_m$
verifying the following properties:

$(a)$ The convex hull of the support of $\mu_1$ is the compact interval $[a_0,a_m]$.

$(b)$ If   $1\le j\le m$ we have either
$w_1^{-1/(p-1)}\in L^{1}([a_{j-1}+\e,a_{j}-\e])$ for every $0<\e<(a_{j}-a_{j-1})/2$
or $\mu_1|_{(a_{j-1},a_{j})}$ is a finite linear combination of Dirac deltas.

$(c)$ For  $0< j< m$ we have $w_1^{-1/(p-1)}\notin L^{1}([a_{j}-\e,a_{j}+\e])$ for every $\e>0$
and we do not have $w_1=0$ a.e. in $[a_{j-1},a_{j+1}]$.

\smallskip

We say that $\mu_1$ is \emph{strongly piecewise regular}
if it is piecewise regular and it verifies the following property: for  $0\le j\le m$,
if $w_1^{-1/(p-1)}\notin L^{1}([a_{j}-\e,a_{j}])$ then $w_1^{-1/p}\notin L^{1}([a_{j}-\e,a_{j}])$, and
if $w_1^{-1/(p-1)}\notin L^{1}([a_{j},a_{j}+\e])$ then $w_1^{-1/p}\notin L^{1}([a_{j},a_{j}+\e])$.

\smallskip

We define $J$ as the set of indices $1\le j\le m$ with $w_1^{-1/(p-1)}\in L^{1}([a_{j-1}+\e,a_{j}-\e])$ for every $0<\e<(a_{j}-a_{j-1})/2$,
while $H$ will be the (finite) set of points $x\in [a_0,a_m]$ verifying $\mu_1 (\{x\}) > 0$,
$w_1^{-1/(p-1)} \notin L^{1}([x-\e,x])$ and $w_1^{-1/(p-1)}\notin L^{1}([x,x+\e])$
for every $\e> 0$.
\end{definition}

\medskip

\begin{Remarks}

\

\begin{enumerate}
\item Condition $(c)$
is not a real restriction, since it just guarantees
the uniqueness of $m$ and the real numbers $a_0<a_1<\cdots<a_m$.

\item
By condition $(b)$ we have that either $w_1=0$ a.e. in $[a_{j-1},a_{j}]$ or $w_1 > 0$ a.e. in $[a_{j-1},a_{j}]$, for $1\le j\le m$.

\item
If $x\in \cup_{j\notin J} (a_{j-1},a_{j})$ verifies $\mu_1 (\{x\}) > 0$, then $x\in H$.

\item Note also that
the practical totality of the measures with compact support in $\RR$ which one usually deals with
in the study of orthogonal polynomials is strongly piecewise regular
(see Example \ref{exam1}).

\item The class of piecewise regular measures allows us to consider (and this was not the case in the papers
\cite{RARP2}, \cite{R4} and \cite{RS}) measures for which the Sobolev space $W^{1,p}([a,b],(\mu_0, \mu_1))$
is not well defined and $\PP^{1,p}([a,b],(\mu_0, \mu_1))$ is not a space of functions
(e.g.,
$\|f\|_{W^{1,p}([-1,1],\mu_0,\mu_1)}^p := \int_{-1}^1 |f(x)|^p dx + \int_{-1}^1 |f'(x)|^p (1-x^2)^{p-1} dx + \a|f'(-1)|^p + \b|f'(1)|^p$
with $\a,\b\ge 0$ and $\a+\b>0$, or the example at the introduction
$\|f\|_{W^{1,p}([0,1],\mu_0,\mu_1)}^p := \int_0^1 |f(x)|^p dx + |f(0)|^p + |f'(0)|^p$).
\end{enumerate}
\end{Remarks}

\medskip

The following result (see \cite[Theorem 4.4]{R2}) will be necessary:

\medskip

\begin{theorem} \label{t:R1}
Let  $\mu_0, \mu_1$ be finite measures on $[a,b]$, and $\a\in [a,b]$.
Assume that $w_1^{-1/(p-1)}\notin L^{1}([\a-\e,\a])$ and
$w_1^{-1/(p-1)}\notin L^{1}([\a,\a+\e])$ for every $\e>0$.
If $\mu_1(\{\a\})>0$ and $\mu_0(\{\a\})=0$, then the
MO is not bounded in $\PP^{1,p}([a,b],(\mu_0, \mu_1))$.
\end{theorem}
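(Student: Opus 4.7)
The plan is to argue by contradiction, exploiting that the atom of $\mu_1$ at $\a$ forces $\|q\|_{L^p(\mu_1)}^p\ge\mu_1(\{\a\})\,|q(\a)|^p$. Using the equivalence recalled around \eqref{maininequality00}, if the MO were bounded in $\PP^{1,p}(\mu_0,\mu_1)$ there would exist $c>0$ with
$$
\|q\|_{L^p(\mu_1)}\le c\,\bigl(\|q\|_{L^p(\mu_0)}+\|q'\|_{L^p(\mu_1)}\bigr),\qquad\forall\,q\in\PP.
$$
The goal is therefore to produce a sequence of polynomials $\{q_n\}$ with $|q_n(\a)|\ge 1/2$ but $\|q_n\|_{L^p(\mu_0)}+\|q_n'\|_{L^p(\mu_1)}\to 0$; substituting into the displayed estimate would then force $\mu_1(\{\a\})=0$, a contradiction.

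The polynomials $q_n$ will be obtained by approximating a $C^1$ bump $\phi$ on $[a,b]$ with $\phi(\a)=1$, $\phi'(\a)=0$, supported in a small neighborhood of $\a$, and both $\int|\phi'|^p w_1\,dx$ and $\int|\phi|^p\,d\mu_0$ arbitrarily small. The bump is constructed from the non-integrability of $w_1^{-1/(p-1)}$ near $\a$: for $\d>0$ and $M>0$ set $F(x):=\dint_{\a-\d}^x w_1(t)^{-1/(p-1)}\,dt$, so that $F(x)\to\iy$ as $x\to\a^-$ by hypothesis; pick $x_0\in(\a-\d,\a)$ with $F(x_0)=M$, define $\psi(x):=F(x)/M$ on $[\a-\d,x_0]$, $\psi\equiv 1$ on $[x_0,\a]$, mirror this symmetrically on $[\a,\a+\d']$ with some $y_0$, and extend by $0$ outside $[\a-\d,\a+\d']$. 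Using $w_1^{-1/(p-1)}\,dx=dF$,
$$
\dint_a^b|\psi'|^p\, w_1\,dx\;=\;2M^{-p}\dint_{0}^{M}dF\;=\;\frac{2}{M^{p-1}}\xrightarrow[M\to\iy]{}0,
$$
while $\dint|\psi|^p\,d\mu_0\le\mu_0([\a-\d,\a+\d'])\to 0$ as $\d,\d'\to 0^+$ since $\mu_0(\{\a\})=0$. A $C^\infty$ mollification of $\psi$ on a scale $\eta$ much smaller than $\d$, performed only near the corner points $\a-\d$, $x_0$, $y_0$, $\a+\d'$ (leaving $\psi\equiv 1$ in a neighborhood of $\a$ untouched), yields a $C^1$ bump $\phi$ with $\phi(\a)=1$, $\phi'(\a)=0$, and the same smallness properties up to a negligible error. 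Finally, apply Weierstrass' theorem to $\phi'$ and integrate to obtain polynomials $q_k\to\phi$ in $C^1([a,b])$; since $\mu_0,\mu_1$ are finite, the uniform convergence of $q_k$ and $q_k'$ yields $L^p$-convergence of the relevant norms to those of $\phi,\phi'$, and because $q_k'(\a)\to\phi'(\a)=0$ the point-mass contribution $|q_k'(\a)|^p\mu_1(\{\a\})$ also vanishes. A diagonal selection $q_n:=q_{k_n}$ produces the contradictory sequence.

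The hardest part is controlling the $L^p(w_1\,dx)$-norm of the derivative after smoothing: at each corner, $\psi'$ has a jump of height at most $w_1(\text{corner})^{-1/(p-1)}/M$, so mollifying on a scale $\eta$ introduces an extra contribution of order $(\text{jump})^p\cdot w_1(\text{corner})\cdot\eta \asymp w_1(\text{corner})^{-1/(p-1)}M^{-p}\eta$, which must be absorbed into $2M^{1-p}$. Since $w_1\in L^1([a,b])$ (as $\mu_1$ is finite), a careful choice of $\eta$ depending on $M$ and $\d$ (possibly exponentially small in $M$) keeps this perturbation negligible; alternatively one can avoid mollification entirely by designing each ramp of $\psi$ as an explicit $C^1$ monotone interpolation with vanishing derivatives at both endpoints, shape-matched to the minimizer $F/M$ so that $\int|\psi'|^p w_1\,dx$ stays of order $M^{1-p}$. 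With either route, the resulting $q_n$ satisfies $\mu_1(\{\a\})^{1/p}/2\le c\,(\|q_n\|_{L^p(\mu_0)}+\|q_n'\|_{L^p(\mu_1)})\to 0$, the desired contradiction.
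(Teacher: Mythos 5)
Your overall strategy is the right one, and it is essentially the one behind the cited source of this result (\cite[Theorem 4.4]{R2}) and behind the analogous constructions in this paper (proofs of Theorems \ref{l:polMuckenhoupt} and \ref{t:subinterval2}): test $\|q\|_{L^p(\mu_1)}\le c\,(\|q\|_{L^p(\mu_0)}+\|q'\|_{L^p(\mu_1)})$ on polynomials close to $1$ at $\a$ whose right-hand side tends to $0$, using the atom of $\mu_1$ at $\a$ on the left. Your computation $\int|\psi'|^p w_1\,dx=2M^{1-p}$ for the ramp of slope $w_1^{-1/(p-1)}/M$ and the bound $\int|\psi|^p d\mu_0\le\mu_0([\a-\d,\a+\d'])\to\mu_0(\{\a\})=0$ are both correct. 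But there is a genuine gap: you only control the absolutely continuous part of $\|\psi'\|_{L^p(\mu_1)}^p$ and the single atom at $\a$; the contribution $\int|\psi'|^p\,d(\mu_1)_s$ over the ramps is never estimated. On the ramps $\psi'=w_1^{-1/(p-1)}/M$ is in general unbounded, and its supremum grows without control as $M\to\iy$ for fixed $\d$ (since $x_0\to\a$), while $(\mu_1)_s$ may be a singular measure concentrated exactly where $w_1^{-1/(p-1)}$ is large; no ordering of the limits $\d\to 0$, $M\to\iy$ makes this term small. A $C^1$ mollification cannot rescue this, because a continuous derivative sees all of $(\mu_1)_s$; moreover the ramp $F/M$ is itself not $C^1$, so mollifying ``only near the corners'' does not produce a $C^1$ function, and mollification does not even preserve the bound on $\int|\cdot|^p w_1$ (one only gets $\int|\psi'\ast\rho_\eta|^p w_1\le\int|\psi'|^p\,(w_1\ast\rho_\eta)$, and $w_1\ast\rho_\eta$ need not be comparable to $w_1$). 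The standard repair, used repeatedly in this paper, is to replace the ramp derivative by $f_0:=(\mbox{ramp derivative})\, I_{[a,b]\setminus S}$ with $S$ a Lebesgue-null set carrying $(\mu_1)_s$ (this changes neither $\int_a^x f_0$ nor $\int |f_0|^p w_1$, but annihilates the singular integral), and then to approximate $f_0$ simultaneously in $L^1(dx)$ and in $L^p(\mu_0+\mu_1)$ by a continuous function (Lusin) and then by a polynomial (Weierstrass); this automatically keeps the $L^p((\mu_1)_s)$-norm of the polynomial's derivative small and its primitive uniformly close to $\psi$.

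A second, smaller defect: $F(x)=\int_{\a-\d}^x w_1^{-1/(p-1)}$ may be identically $+\iy$ on a left neighbourhood of $\a$ (for instance if $w_1$ vanishes on a subinterval, or on a sequence of intervals accumulating at $\a$), since the hypothesis gives non-integrability at $\a$ but no local integrability away from it; then no $x_0$ with $F(x_0)=M<\iy$ exists and your ramp is undefined. The fix is to use the truncated slope $\min\{w_1^{-1/(p-1)},t\}$, which is always integrable, satisfies $\int_{\a-\d}^{\a}\min\{w_1^{-1/(p-1)},t\}\to\iy$ as $t\to\iy$ by monotone convergence, and still obeys $\min\{w_1^{-1/(p-1)},t\}^p\,w_1\le\min\{w_1^{-1/(p-1)},t\}$ pointwise, so the normalized ramp again has $\int|\psi'|^p w_1\le M^{1-p}$. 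With these two corrections (truncated ramp, restriction by $I_{[a,b]\setminus S}$, and the Lusin--Weierstrass passage to polynomials applied to the derivative) your argument closes.
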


\medskip

\begin{theorem} \label{t:subinterval2}
Let $\mu_0, \mu_1$ be finite measures on $[a,b]$.

\smallskip

$(1)$
Assume that $\mu_1$ is piecewise regular with $a_0<a_1<\cdots<a_m$.
If the MO is bounded in  the space $\PP^{1,p}(\mu_0|_{Reg([a_{j-1},a_{j}])}, \mu_1|_{Reg([a_{j-1},a_{j}])})$
for each $j\in J$ and $\mu_0(\{x\})>0$ for all $x\in H$,
then it is bounded in $\PP^{1,p}(\mu_0, \mu_1)$.

\smallskip

$(2)$
Suppose that $\mu_1$ is strongly piecewise regular with $a_0<a_1<\cdots<a_m$.
If the MO is bounded in $\PP^{1,p}(\mu_0, \mu_1)$,
then it is bounded in $\PP^{1,p}(\mu_0|_{Reg([a_{j-1},a_{j}])}, \mu_1|_{Reg([a_{j-1},a_{j}])})$
for each $j\in J$ and $\mu_0(\{x\})>0$ for all $x\in H$.
\end{theorem}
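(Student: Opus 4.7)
The plan is to use the characterization \eqref{maininequality00}: boundedness of the MO on $\PP^{1,p}(\mu_0,\mu_1)$ is equivalent to the existence of a constant $c>0$ with $\|g\|_{L^p(\mu_1)} \le c\,(\|g\|_{L^p(\mu_0)} + \|g'\|_{L^p(\mu_1)})$ for every $g\in\PP$. Both parts then reduce to transferring such an inequality between the whole interval $[a_0,a_m]$ and the pieces $Reg([a_{j-1},a_j])$, $j\in J$.

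For part $(1)$, I would decompose $\|g\|_{L^p(\mu_1)}^p$ as a sum over the pieces $[a_{j-1},a_j]$, carefully assigning each atom to exactly one piece. On each $j\in J$, the local MO boundedness hypothesis together with \eqref{maininequality00} and the convexity inequality $(x+y)^p\le 2^{p-1}(x^p+y^p)$ give an estimate of the form $\|g\|_{L^p(Reg([a_{j-1},a_j]),\mu_1)}^p \le c_j\,(\|g\|_{L^p(Reg([a_{j-1},a_j]),\mu_0)}^p + \|g'\|_{L^p(Reg([a_{j-1},a_j]),\mu_1)}^p)$, and summing over the finite set $J$ yields a uniform constant. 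For $j\notin J$, $\mu_1|_{(a_{j-1},a_j)}$ is a finite combination of Dirac deltas; by the third item of the remarks following Definition \ref{d:piecewiseregular}, each such atom $x$ lies in $H$, so $\mu_0(\{x\})>0$ by hypothesis and $|g(x)|^p\mu_1(\{x\}) \le (\mu_1(\{x\})/\mu_0(\{x\}))\,|g(x)|^p\mu_0(\{x\})$. Atoms at the partition points $a_j$ are handled analogously if $a_j\in H$; otherwise $w_1^{-1/(p-1)}\in L^1$ near $a_j$ on at least one side, placing $a_j$ in $Reg$ of the adjacent piece and absorbing the atom into the corresponding local bound. Summing all contributions yields the global inequality.

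For part $(2)$, the $H$-condition is immediate by contradiction: any $x\in H$ with $\mu_0(\{x\})=0$ would satisfy every hypothesis of Theorem \ref{t:R1}, forcing MO not to be bounded. The harder task is to extract the local MO boundedness on each $\PP^{1,p}(\mu_0|_{Reg([a_{j-1},a_j])},\mu_1|_{Reg([a_{j-1},a_j])})$, $j\in J$, from the global one. My plan is to rewrite the global polynomial inequality via $g=g(a_0)+\int_{a_0}^x g'$ as a three-measure Muckenhoupt-type inequality in $f=g'$, apply Theorem \ref{l:polMuckenhoupt} to extend it from polynomial $f$ to arbitrary measurable $f$, then restrict $f$ to be supported in $Reg([a_{j-1},a_j])$, and finally reintegrate to obtain a local MO inequality on that piece.

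The main obstacle is precisely this localization step. The strongly piecewise regular hypothesis, namely $w_1^{-1/p}\notin L^1$ at a breakpoint $a_j$ whenever $w_1^{-1/(p-1)}\notin L^1$ there, is what prevents the constant $g(a_0)$ appearing in the ansatz from producing a spurious boundary term when one cuts the test function down to $Reg([a_{j-1},a_j])$; this is what ensures the translation back to the local MO inequality is lossless and the desired local boundedness follows.
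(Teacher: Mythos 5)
Your part $(1)$ and the derivation of $\mu_0(\{x\})>0$ for $x\in H$ in part $(2)$ coincide with the paper's argument: decompose $\mu_1=\sum_{j\in J}\mu_1|_{Reg([a_{j-1},a_j])}+\sum_{x\in H}\mu_1(\{x\})\delta_x$, apply the local bounds and the atom comparison, and invoke Theorem \ref{t:R1} for the atoms. The problem is the localization step in part $(2)$, which as described does not close. First, Theorem \ref{l:polMuckenhoupt} cannot be applied on $[a_0,a_m]$ with $\nu_3=\mu_1$: its hypothesis $w_1^{-1/(p-1)}\in L^1([a_0,r])$ for every $r<a_m$ fails whenever $w_1^{-1/(p-1)}$ is non-integrable near $a_0$ or near an interior breakpoint, i.e., in essentially every case where part $(2)$ says something nontrivial. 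Second, even granting the extension to measurable $f$, restricting $f$ to be supported in $Reg([a_{j-1},a_j])$ makes $\int_{a_0}^x f$ vanish for $x\le a_{j-1}$ and equal to the constant $C_f=\int_{a_{j-1}}^{a_j}f$ for $x\ge a_j$; what you obtain is therefore only the local inequality for polynomials vanishing at $a_{j-1}$, and it carries the parasitic term $|C_f|\,\mu_0((a_j,a_m])^{1/p}$ on the right-hand side coming from the mass of $\mu_0$ outside the piece. Neither defect is cosmetic: the local MO inequality must in particular hold for $g\equiv 1$, i.e., it forces $\mu_1(Reg([a_{j-1},a_j]))\le c^p\,\mu_0(Reg([a_{j-1},a_j]))$, and your localized inequality never sees constant functions, so it cannot yield this.

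The paper resolves this by going in the opposite direction: for each polynomial $g$, each $j\in J$ and each $\e>0$ it constructs a single global polynomial $g_0$ with $\bigl|\,\|g\|_{L^p(\mu_i|_{Reg([a_{j-1},a_j])})}-\|g_0\|_{L^p(\mu_i)}\bigr|\le\e$ for $i=0,1$ and $\bigl|\,\|g'\|_{L^p(\mu_1|_{Reg([a_{j-1},a_j])})}-\|g_0'\|_{L^p(\mu_1)}\bigr|\le\e$, and then simply evaluates the global inequality at $g_0$. The construction extends $g'$ outside the piece by ramps of the form $g(a_{j-1})\min\{w_1^{-1/p},t_1\}$ and $-g(a_j)\min\{w_1^{-1/p},t_2\}$ over intervals of tiny $\mu_0+\mu_1$ measure, normalized so that each ramp integrates to the boundary value it must absorb; this normalization is achievable precisely because $w_1^{-1/p}\notin L^1$ near the relevant endpoint --- the quantitative use of \emph{strong} piecewise regularity --- and the $L^p(\mu_1)$ cost of the ramp is at most $K\eta^{1/p}$, hence negligible. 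Your final paragraph correctly identifies that $w_1^{-1/p}\notin L^1$ must be what kills the spurious boundary terms, but you never convert that observation into a construction; without such cheap cutoffs (or an equivalent device) the passage from the global to the local MO inequality remains unproved.
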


\medskip

\begin{proof}
First of all, note that by the definition of piecewise regular measure we have:
\begin{equation} \label{eq:a8}
\mu_1 = \sum_{j\in J} \mu_1|_{Reg([a_{j-1},a_{j}])} + \sum_{x\in H} \mu_1(\{x\}) \d_{x} \,.
\end{equation}

\smallskip

Let's assume that $\mu_1$ is piecewise regular, that the MO is bounded in $\PP^{1,p}(\mu_0|_{Reg([a_{j-1},a_{j}])}, \mu_1|_{Reg([a_{j-1},a_{j}])})$
for each $j\in J$ and that $\mu_0(\{x\})>0$ for every $x\in H$.
Then, for every $j\in J$, there exists a constant $c_j$ such that (see \eqref{maininequality00})
$$
\begin{aligned}
 \left\| g  \right\|_{L^p(\mu_1|_{Reg([a_{j-1},a_{j}])})}
& \le
c_j \left(  \left\| g   \right\|_{L^p(\mu_0|_{Reg([a_{j-1},a_{j}])})} +
 \left\|  g'  \right\|_{L^p( \mu_1|_{Reg([a_{j-1},a_{j}])})} \right)
\\
& \le
c_j \left(  \left\| g   \right\|_{L^p( \mu_0)} +
 \left\|  g'  \right\|_{L^p( \mu_1)} \right) ,
\\
 \left\| g \right\|_{L^p( \mu_1|_{H})}
& \le
\left(\frac{\max_{x\in H} \mu_1(\{x\})}{\min_{x\in H} \mu_0(\{x\})}\right)^{1/p}  \left\| g  \right\|_{L^p( \mu_0|_{H})}
\\
& \le
\left(\frac{\max_{x\in H} \mu_1(\{x\})}{\min_{x\in H} \mu_0(\{x\})}\right)^{1/p} \left\| g\right\|_{L^p( \mu_0)} \,,
\end{aligned}
$$
are true for every $g\in\mathbb{P}$.

These inequalities and \eqref{eq:a8} give \eqref{eq:a0}, and then the MO is bounded in $\PP^{1,p}(\mu_0, \mu_1)$.

\medskip

Let's assume now that $\mu_1$ is strongly piecewise regular and that the MO is bounded in $\PP^{1,p}(\mu_0, \mu_1)$.
Therefore, there exists a positive constant $c$ such that
\begin{equation} \label{eq:prea8}
\left\|f\right\|_{L^p(\mu_1)} \le c \left( \left\|f\right\|_{L^{p}(\mu_0)} + \left\|f' \right\|_{L^{p}(\mu_1)}  \right),\quad\forall\, f\in \mathbb{P}.
\end{equation}
Since the MO is bounded in $\PP^{1,p}(\mu_0, \mu_1)$, \eqref{eq:a8} and Theorem \ref{t:R1} prove that
$\mu_0(\{x\})>0$ for every $x\in H$.

\medskip

Let us prove now that the MO is bounded in
$\PP^{1,p}(\mu_0|_{Reg([a_{j-1},a_{j}])}, \mu_1|_{Reg([a_{j-1},a_{j}])})$
for each $j\in J$.
Let us fix $j\in J$.
Note that in order to check that there exists a constant $c>0$ such that
$$
 \left\|g \right\|_{L^p(\mu_1|_{Reg([a_{j-1},a_{j}])})} \le c \left( \left\|g \right\|_{L^{p}(\mu_0|_{Reg([a_{j-1},a_{j}])})} +  \left\|g' \right\|_{L^{p}(\mu_1|_{Reg([a_{j-1},a_{j}])})}  \right),\quad\forall\, g\in \mathbb{P},
$$
 by \eqref{eq:prea8} it suffices to prove that given $g\in \mathbb{P}$ and $\e>0$,
there exists  $g_0\in \mathbb{P}$ with
\begin{equation} \label{eq:a9}
\left| \left\|g\right\|_{L^p(\mu_i|_{Reg([a_{j-1},a_{j}])})} -  \left\|g_0  \right\|_{L^p(\mu_i)} \right| \le \e \,,
\quad
\left| \left\|g' \right\|_{L^p(\mu_1|_{Reg([a_{j-1},a_{j}])})} - \left\|g_0' \right\|_{L^p(\mu_1)} \right| \le \e \,.
\end{equation}

Assume first that $Reg([a_{j-1},a_{j}])=[a_{j-1},a_{j}]$.
Then we have $w_1^{-1/p}\notin L^{1}([a_{j-1}-\e',a_{j-1}])$ and
$w_1^{-1/p}\notin L^{1}([a_{j},a_{j}+\e'])$ for every $\e'>0$.
Fixed  $g\in \mathbb{P}$ and $\e>0$, we define $K:=\max\{|g(a_{j-1})|,\,|g(a_{j})|\}$.
Since $\mu_1, \mu_2$ are finite, there exists $0<\eta< \e^p$ with
$$
\mu_0\left((a_{j-1}-\eta,a_{j-1})\right), \; \; \mu_1\left((a_{j-1}-\eta,a_{j-1})\right), \; \; \mu_0\left((a_{j},a_{j}+\eta)\right), \; \; \mu_1\left((a_{j},a_{j}+\eta)\right) < \e^p .
$$
Since $w_1^{-1/p}\notin L^{1}\left([a_{j-1}-\eta,a_{j-1}]\right)$ and
$w_1^{-1/p}\notin L^{1}([a_{j},a_{j}+\eta])$, there exist $t_1,t_2 > 0$ verifying
$$
\int_{a_{j-1}-\eta}^{a_{j-1}} \min \left\{w_1^{-1/p},\, t_1 \right\} = 1 \,,
\qquad
\int_{a_{j}}^{a_{j}+\eta} \min \left\{w_1^{-1/p},\, t_2 \right\} = 1 \,.
$$
Let's define $g_1:=g(a_{j-1})\min \left\{w_1^{-1/p},\, t_1 \right\}$
and $g_2:= g(a_{j})\min \left\{w_1^{-1/p},\, t_2 \right\}$.
Fixed a measurable set $S\subseteq [a,b]$ with
zero Lebesgue measure and such that $(\mu_1)_s|_S=(\mu_1)_s$.
Let $f_1(x):=\int_a^x f_0$, where $f_0$ is defined by the following
$$
f_0:=
\left\{
\begin{aligned}
0 \,, \quad & \text{ on } (-\infty,a_{j-1}-\eta] \,,
\\
g_1 I_{\RR\setminus S}\,, \quad & \text{ on } (a_{j-1}-\eta,a_{j-1}) \,,
\\
g' \,, \quad & \text{ on } [a_{j-1},a_{j}] \,,
\\
-g_2 I_{\RR\setminus S}\,, \quad & \text{ on } (a_{j},a_{j}+\eta) \,,
\\
0 \,, \quad & \text{ on } [a_{j}+\eta,\infty) \,.
\end{aligned}
\right.
$$
We have $f_1=g$ on $[a_{j-1},a_{j}]$ and $f_1=0$ on
$(-\infty,a_{j-1}-\eta] \cup [a_{j}+\eta,\infty)$.
Hence,
$$
\begin{aligned}
\left| \left\|g\right\|_{L^p(\mu_i|_{Reg([a_{j-1},a_{j}])})} -  \left\|f_1 \right\|_{L^p(\mu_i)} \right|
&
\le  \left\| f_1  \right\|_{L^p(\mu_i|_{(a_{j-1}-\eta,a_{j-1})})}
+  \left\| f_1  \right\|_{L^p(\mu_i|_{(a_{j},a_{j}+\eta)})}
\\
& \le K \, \mu_i\left((a_{j-1}-\eta,a_{j-1})\right)^{1/p}  + K \, \mu_i\left((a_{j},a_{j}+\eta)\right)^{1/p}
\le 2 \, K \, \e
\,,
\\
\left| \left\|g'\right\|_{L^p(\mu_1|_{Reg([a_{j-1},a_{j}])})} - \left\|f_0\right\|_{L^p(\mu_1)} \right| &
\le \left\| g_1 \right\|_{L^p(w_1|_{(a_{j-1}-\eta,a_{j-1})})}
+ \left\| g_2\right\|_{L^p(w_1|_{(a_{j},a_{j}+\eta)})}
\\
& \le K \, \left(\int_{a_{j-1}-\eta}^{a_{j-1}} \left|w_1^{-1/p}\right|^p w_1 \right)^{1/p}
+ K \, \left(\int_{a_{j}}^{a_{j}+\eta} \left|w_1^{-1/p}\right|^p w_1 \right)^{1/p}
\\
& = 2 \, K \, \eta^{1/p}
\le 2 \, K \, \e
\,.
\end{aligned}
$$
Since $f_0\in L^1([a,b])$, there exists $h_0\in \mathcal{C}([a,b])$ with
$$
 \left\| f_0-h_0 \right\|_{L^p(\mu_0+\mu_1)} +  \left\| f_0-h_0 \right\|_{L^1([a,b])} < \e\,.
$$
Weierstrass's Theorem provides a polynomial $h$ with $ \| h_0-h \|_{L^\infty([a,b])} < \e$.
Let us define the constant $c_0:=(\mu_0+\mu_1)([a,b])^{1/p}$ and the polynomial $f_2(x):=\int_a^x h$.
Then,
$$
\begin{aligned}
 \left\| h_0-h  \right\|_{L^p(\mu_0+\mu_1)}
& +  \left\| h_0-h  \right\|_{L^1([a,b])}
< \e\, c_0 + \e\, (b-a)
\,,
\\
 \left\| f_0-h  \right\|_{L^p(\mu_0+\mu_1)}
& + \left\| f_0-h  \right\|_{L^1([a,b])}
< (c_0+b-a+1)\, \e
\,,
\\
\left\|  f_1- f_2  \right\|_{L^p(\mu_i)}
& \le
 \left\|  f_0-h  \right\|_{L^1([a,b])} \mu_i([a,b])^{1/p}
< c_0\,(c_0+b-a+1)\, \e\,,
\end{aligned}
$$
and we conclude
$$
\begin{aligned}
\left|  \left\|g \right\|_{L^p(\mu_i|_{Reg([a_{j-1},a_{j}])})} - \left\|f_2 \right\|_{L^p(\mu_i)} \right|
& \le (2 \, K + c_0\,(c_0+b-a+1) )\, \e
\,,
\\
\left| \left\|g'\right\|_{L^p(\mu_1|_{Reg([a_{j-1},a_{j}])})} - \left\|f_2'\right\|_{L^p(\mu_1)} \right|
& \le (2 \, K + c_0+b-a+1 ) \, \e
\,.
\end{aligned}
$$
Since $\e>0$ is arbitrary, this proves \eqref{eq:a9} when $Reg([a_{j-1},a_{j}])=[a_{j-1},a_{j}]$.

\medskip

If $Reg([a_{j-1},a_{j}])=(a_{j-1},a_{j})$,  we can apply the same argument considering now
the functions $g_1$ and $g_2$ in the intervals $(a_{j-1},a_{j-1}+\eta)$ and $(a_{j}-\eta,a_{j})$,
respectively.

\smallskip

In the case $Reg([a_{j-1},a_{j}])=[a_{j-1},a_{j})$, we consider  $g_1$ and $g_2$ in  $(a_{j-1}-\eta,a_{j-1})$ and $(a_{j}-\eta,a_{j})$,
respectively.

\smallskip

Finally, for  $Reg([a_{j-1},a_{j}])=(a_{j-1},a_{j}]$, we take $g_1$ and $g_2$ in $(a_{j-1},a_{j-1}+\eta)$ and $(a_{j},a_{j}+\eta)$,
respectively.
\end{proof}
Theorem \ref{t:subinterval2} has the following consequence.
\begin{corollary} \label{t:subinterval3}
Let $\mu_0, \mu_1$ be finite measures on $[a,b]$
such that $\mu_1$ is strongly piecewise regular with $a_0<a_1<\cdots<a_m$. Then the MO %$M(f)(x):=x\,f(x)$
is bounded in $\PP^{1,p}(\mu_0, \mu_1)$
iff it is bounded in $\PP^{1,p}(\mu_0|_{Reg([a_{j-1},a_{j}])},\mu_1|_{Reg([a_{j-1},a_{j}])})$
for each $j\in J$ and $\mu_0(\{x\})>0$ for all $x\in H$.
\end{corollary}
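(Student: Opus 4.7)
The plan is to derive Corollary \ref{t:subinterval3} as a direct combination of the two parts of Theorem \ref{t:subinterval2}. Since a strongly piecewise regular measure is in particular piecewise regular, both parts of that theorem apply under the hypothesis of the corollary, and together they provide both implications.

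For the \emph{if} direction, I would assume that the MO is bounded in $\PP^{1,p}(\mu_0|_{Reg([a_{j-1},a_{j}])},\mu_1|_{Reg([a_{j-1},a_{j}])})$ for every $j\in J$ and that $\mu_0(\{x\})>0$ for all $x\in H$. Since $\mu_1$ is piecewise regular with the same decomposition $a_0<a_1<\cdots<a_m$, part $(1)$ of Theorem \ref{t:subinterval2} applies verbatim and yields the boundedness of the MO in $\PP^{1,p}(\mu_0,\mu_1)$.

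For the \emph{only if} direction, I would assume that the MO is bounded in $\PP^{1,p}(\mu_0,\mu_1)$. This is exactly the hypothesis of part $(2)$ of Theorem \ref{t:subinterval2}, which requires $\mu_1$ to be strongly piecewise regular; this extra strength is precisely why the corollary strengthens the hypothesis from piecewise regular to strongly piecewise regular. Applying part $(2)$ yields simultaneously the boundedness of the MO on each regular piece $\PP^{1,p}(\mu_0|_{Reg([a_{j-1},a_{j}])},\mu_1|_{Reg([a_{j-1},a_{j}])})$ for $j\in J$, and the positivity $\mu_0(\{x\})>0$ at each atomic point $x\in H$.

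There is really no obstacle to overcome: the corollary is a clean repackaging of Theorem \ref{t:subinterval2} as a characterization (iff statement), once one observes that the hypothesis ``strongly piecewise regular'' is the common strengthening needed to have both directions of Theorem \ref{t:subinterval2} available simultaneously. The proof therefore reduces to a one-line citation of parts $(1)$ and $(2)$ of that theorem.
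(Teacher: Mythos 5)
Your proposal is correct and matches the paper, which presents the corollary as an immediate consequence of Theorem \ref{t:subinterval2}: part $(1)$ (applicable since strongly piecewise regular implies piecewise regular) gives the \emph{if} direction, and part $(2)$ gives the \emph{only if} direction. Nothing further is needed.
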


\medskip

As we mentioned in the introduction, \ref{t:subinterval} and Corollary \ref{t:subinterval3}
together characterize the boundedness of the MO
for a large class of measures which includes the most usual examples in the literature
of orthogonal polynomials.
It is remarkable that we require the hypothesis of strongly piecewise regular just for $\mu_1$.

\medskip

The following example shows a large class of measures verifying the hypotheses in Corollary \ref{t:subinterval3}.

\medskip

\begin{example} \label{exam1}
The measure $\mu_1$ below is finite and strongly piecewise regular
$$
d\mu_1:= |x-a_0|^{\a_0} |x-a_1|^{\a_1} \cdots |x-a_m|^{\a_m} v(x) I_{[a_0,a_m]}(x) \, dx + \sum_{j=1}^{r} c_j d\d_{x_j}\,,
$$
if $c_1,\dots ,c_r \ge 0,$ $x_1,\dots ,x_r \in [a_0,a_m]$,
$\a_0,\a_1,\dots ,\a_m>-1,$ $\a_0,\a_1,\dots ,\a_m\notin [p-1,p)$,
and there exists a constant $C\ge 1$ with $C^{-1} \le v(x) \le C$ for $x \in [a_0,a_m]$.
\end{example}

\medskip

If we study a particular (although very large) class of measures, it is possible to improve the first conclusion
in Theorem \ref{t:subinterval2}.

\medskip

\begin{definition}
Let $\mu_1$ be a measure on $\RR$.
We say that $\mu_1$ is \emph{piecewise monotone} if there exist real numbers $b_0<b_1<\cdots<b_n$
verifying the following properties:

$(a)$ The convex hull of the support of $\mu_1$ is the compact interval $[b_0,b_n]$.

$(b)$ For each $1\le j\le n$ the weight $w_1$ is comparable to a (non-strictly) monotone function
on $(b_{j-1},b_{j})$.

$(c)$ The singular part of $\mu_1$ is a finite linear combination of Dirac deltas.

If $\mu_1$ is piecewise monotone, then it is piecewise regular with constants $a_0<a_1<\cdots<a_m$ (see Definition \ref{d:piecewiseregular}).
We say that $a_0<a_1<\cdots<a_m$ are the \emph{parameters} of $\mu_1$.
\end{definition}

Note that if $\mu_1$ is piecewise monotone, then $b_0=a_0$ and $b_n=a_m$,
but it is possible that $\{b_0,b_1,\dots,b_n\}\neq \{a_0,a_1,\dots,a_m\}$. Also, it is possible that $w_1=0$ in some $(b_{j-1},b_{j})$.

\medskip

The following results are specially useful in the study of SOP,
as Example \ref{exam2} below shows.
\begin{theorem} \label{t:subinterval4}
Let  $\mu_0, \mu_1$ be finite measures on $[a,b]$,
where $\mu_1$ is piecewise monotone with parameters $a_0<a_1<\cdots<a_m$.
If $\mu_0\left( Reg([a_{j-1},a_{j}]) \right)>0$
for each $j\in J$ and $\mu_0(\{x\})>0$ for all $x\in H$,
then the MO is bounded in $\PP^{1,p}(\mu_0, \mu_1)$.
\end{theorem}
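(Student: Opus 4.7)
The plan is to invoke Theorem \ref{t:subinterval2}(1) as the outer reduction: since the condition $\mu_0(\{x\})>0$ for $x\in H$ is given, it suffices to show that for each $j\in J$ the MO is bounded on $\PP^{1,p}(\widetilde\mu_0,\widetilde\mu_1)$, where $\widetilde\mu_i:=\mu_i|_{R_j}$ and $R_j:=Reg([a_{j-1},a_j])$. Fix such a $j$; by hypothesis $\widetilde\mu_0(R_j)=\mu_0(R_j)>0$. The case $R_j=[a_{j-1},a_j]$ is immediate from Theorem \ref{t:subinterval}(1). I will only spell out the case $R_j=[a_{j-1},a_j)$; the case $R_j=(a_{j-1},a_j]$ is symmetric, and $R_j=(a_{j-1},a_j)$ is handled by running the one-sided argument on both halves of $[a_{j-1},a_j]$ separated by an auxiliary point $x_0\in R_j$ and concluding with Lemma \ref{l:last}.

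Once we establish
$$
\Lambda_{p,[a_{j-1},a_j],a_j}\bigl((\widetilde\mu_1-k\widetilde\mu_0)_+,\widetilde\mu_1\bigr)<\infty
$$
for some $k\ge 0$, Proposition \ref{p:first} applied with $f=g'$ yields
$$
\|g-g(a_{j-1})\|_{L^p(\widetilde\mu_1)}\le c\bigl(\|g-g(a_{j-1})\|_{L^p(\widetilde\mu_0)}+\|g'\|_{L^p(\widetilde\mu_1)}\bigr),\qquad \forall\, g\in\PP.
$$
Because $R_j=[a_{j-1},a_j)$ and $\widetilde\mu_0(R_j)>0$, there exists $r_0\in(a_{j-1},a_j)$ with $\widetilde\mu_0([a_{j-1},r_0])>0$; moreover $w_1^{-1/(p-1)}\in L^1([a_{j-1},r_0])$ since $a_{j-1}\in R_j$ and $j\in J$. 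Lemma \ref{l:bp}(1) applied on $[a_{j-1},r_0]$ then bounds $|g(a_{j-1})|$ by $\|g\|_{L^p([a_{j-1},r_0],\widetilde\mu_0)}+\|g'\|_{L^p([a_{j-1},r_0],\widetilde\mu_1)}$, and a triangle inequality gives $\|g\|_{L^p(\widetilde\mu_1)}\le c'\bigl(\|g\|_{L^p(\widetilde\mu_0)}+\|g'\|_{L^p(\widetilde\mu_1)}\bigr)$, i.e., the boundedness of the MO via \eqref{maininequality00}.

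The core step, and main obstacle, is the verification of the $\Lambda$ condition above. For $\e>0$ sufficiently small, the interval $(a_j-\e,a_j)$ contains no breakpoint $b_i$ of the piecewise monotone decomposition and no atom of $(\mu_1)_s$; hence on $[a_j-\e,a_j]$ the weight $w_1$ is comparable to a (non-strictly) monotone function, and $(\widetilde\mu_1)_s=0$ there (note that $a_j\notin R_j$ additionally rules out a Dirac of $\widetilde\mu_1$ at $a_j$). This monotone function must be non-increasing: otherwise $w_1$ would be bounded below by a positive constant on $[a_j-\e,a_j)$, forcing $w_1^{-1/(p-1)}\in L^1([a_j-\e,a_j])$ and contradicting $a_j\notin R_j$. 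Theorem \ref{t:iff3}(1), applied with $k_0=0$, thus produces $k\ge 0$ with $\Lambda_{p,[a_j-\e,a_j],a_j}((\widetilde\mu_1-k\widetilde\mu_0)_+,\widetilde\mu_1)<\infty$. Finally, since $w_1^{-1/(p-1)}\in L^1([a_{j-1},a_j-\e])$, Lemma \ref{General Lemma} lifts this finiteness to the whole $[a_{j-1},a_j]$, closing the argument.
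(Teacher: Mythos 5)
Your proof is correct and follows essentially the same route as the paper's: reduce via Theorem \ref{t:subinterval2}(1), verify the $\Lambda$-condition near the non-regular endpoints of each $[a_{j-1},a_j]$ by combining the piecewise monotonicity with Theorem \ref{t:iff3}, and lift to the whole subinterval with Lemma \ref{General Lemma}. The only differences are presentational: the paper spells out the case $Reg([a_{j-1},a_j])=(a_{j-1},a_j)$ and invokes Lemma \ref{l:last}, whereas you spell out $[a_{j-1},a_j)$ and reassemble the endpoint correction directly from Proposition \ref{p:first} and Lemma \ref{l:bp}; your explicit check that the comparable monotone function must be non-increasing toward a non-regular endpoint is a detail the paper leaves implicit.
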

\begin{proof}
By Theorem \ref{t:subinterval2}, it suffices to prove for every $j\in J$ that
the MO is bounded in  the space\\ $\PP^{1,p}(\mu_0|_{Reg([a_{j-1},a_{j}])}, \mu_1|_{Reg([a_{j-1},a_{j}])})$.

Fixed $j\in J$, since $\mu_1$ is piecewise monotone, there exists $0< \e < (a_{j}-a_{j-1})/2$ such that
$w_1$ is comparable to a (non-strictly) monotone function on $(a_{j-1},a_{j-1}+\e)$ and on $(a_{j}-\e,a_{j})$,
and $(\mu_1)_s \left( (a_{j-1},a_{j-1}+\e) \right)= (\mu_1)_s \left( (a_{j}-\e,a_{j}) \right)= 0$.

Assume that $Reg([a_{j-1},a_{j}])= (a_{j-1},a_{j})$, since the other cases are similar and easier.
Using that
$\mu_0\left( Reg([a_{j-1},a_{j}]) \right)>0$, by Lemma \ref{l:last}
the MO is bounded in $\PP^{1,p}(\mu_0|_{Reg([a_{j-1},a_{j}])}, \mu_1|_{Reg([a_{j-1},a_{j}])})$
if we have
$$
\Lambda_{p,[a_{j-1},x_0],a_{j-1}} \left( (\mu_1-k_0\mu_0)_+,\,\mu_1 \right)<\infty\,,
\quad
\Lambda_{p,[x_0,a_{j}],a_{j}} \left( (\mu_1-k_0\mu_0)_+,\,\mu_1 \right)
< \infty\,,
$$
for some constant $k_0$ and some point $x_0 \in (a_{j-1},a_{j})$.
By Lemma \ref{General Lemma} these inequalities are equivalent to
$$
\Lambda_{p,[a_{j-1},a_{j-1}+\e],a_{j-1}} \left( (\mu_1-k\mu_0)_+,\,\mu_1 \right)<\infty\,,
\quad
\Lambda_{p,[a_{j}-\e,a_{j}],a_j} \left( (\mu_1-k\mu_0)_+,\,\mu_1 \right)
< \infty\,,
$$
for some constant $k$.
Applying Theorem \ref{t:iff3} we obtain these inequalities, so the proof is finished.
\end{proof}

\medskip

The following example shows the large class of measures verifying the hypotheses in Theorem \ref{t:subinterval4}.
\begin{example} \label{exam2}
Given $a\in \RR$, let us consider the set $\mathfrak{W}_a$ of weights
obtained by the products of:
$$
|x-a|^{\a_1}\,, \quad \exp \left(-\beta|x-a|^{-\a_2}\right)\,,
\quad \left|\log \dfrac 1{|x-a|} \right|^{\a_3}\,, \quad
\left|\log \left|\log \left| \cdots \left|\log \dfrac 1{|x-a|}\right| \cdots \right| \right| \right|^{\a_4}\,,
$$
in such a way that the weights are integrable in some neighborhood of $a$,
and denote by $\mathfrak{W}$ the class of weights $w$ for which there exist $a_0<a_1<\cdots <a_m$ and weights $v_j \in \mathfrak{W}_{a_j}$
such that $w$ is comparable to $v_j$ in some neighborhood $V_j$ of $a_j$ for $j=0,1,\dots,m,$
and $w$ is comparable to the constant function $1$ in $[a_0,a_m]\setminus \cup_{j=0}^m V_j$.
We say that $a_0<a_1<\cdots<a_m$ are the parameters of $w$.
If
$$
d\mu_1:= w(x) I_{[a_0,a_m]}(x) \, dx + \sum_{j=1}^{r} c_j d\d_{x_j}\,,
$$
where $w \in \mathfrak{W}$ with parameters $a_0<a_1<\cdots<a_m$,
$c_1,\dots ,c_r \ge 0,$ and $x_1,\dots ,x_r \in [a_0,a_m]$,
then $\mu_1$ is finite and piecewise monotone.

Note that this class of measures is wider than the one in  Example \ref{exam1}.
\end{example}
As a consequence of Theorem \ref{t:subinterval4} we have the following result.
\begin{corollary} \label{t:subinterval5}
Let  $\mu_0, \mu_1$ be finite measures on $[a,b]$,
where $\mu_1=\mu_{1,1}+\mu_{1,2}$, $\mu_{1,1}$ is piecewise monotone with parameters $a_0<a_1<\cdots<a_m$
and $\mu_{1,2} \le k\mu_0$ for some constant $k$.
If $\mu_0\left( Reg([a_{j-1},a_{j}]) \right)>0$
for each $j\in J$ and $\mu_0(\{x\})>0$ for all $x\in H$,
then the MO is bounded in $\PP^{1,p}(\mu_0, \mu_1)$.
\end{corollary}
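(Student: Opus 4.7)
The plan is to reduce Corollary~\ref{t:subinterval5} to Theorem~\ref{t:subinterval4} by treating the two summands of $\mu_1$ separately, exploiting that $\mu_{1,2}$ is already controlled by $\mu_0$ in a trivial way.

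First, I would invoke Theorem~\ref{t:subinterval4} applied to the pair of measures $(\mu_0,\mu_{1,1})$. All of its hypotheses are satisfied: $\mu_0$ and $\mu_{1,1}$ are finite (the latter because $\mu_{1,1}\le\mu_1$); $\mu_{1,1}$ is piecewise monotone with parameters $a_0<a_1<\cdots<a_m$; and the assumptions $\mu_0(Reg([a_{j-1},a_j]))>0$ for $j\in J$ and $\mu_0(\{x\})>0$ for $x\in H$ hold by hypothesis (note that $J$ and $H$ are defined with respect to $\mu_{1,1}$, which is precisely the piecewise monotone component). The conclusion is that the MO is bounded on $\PP^{1,p}(\mu_0,\mu_{1,1})$, which via \eqref{maininequality00} gives a constant $c_1>0$ such that
$$
\|f\|_{L^p(\mu_{1,1})}\le c_1\bigl(\|f\|_{L^p(\mu_0)}+\|f'\|_{L^p(\mu_{1,1})}\bigr),\qquad\forall f\in\PP.
$$

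Next, since $\mu_{1,2}\le k\mu_0$ I have the trivial pointwise estimate
$$
\|f\|_{L^p(\mu_{1,2})}\le k^{1/p}\|f\|_{L^p(\mu_0)},\qquad\forall f\in\PP.
$$
Using the additivity of the $L^p$-norm raised to the $p$-th power with respect to the sum of measures, $\|f\|_{L^p(\mu_1)}^p=\|f\|_{L^p(\mu_{1,1})}^p+\|f\|_{L^p(\mu_{1,2})}^p$, so $\|f\|_{L^p(\mu_1)}\le \|f\|_{L^p(\mu_{1,1})}+\|f\|_{L^p(\mu_{1,2})}$. Moreover $\|f'\|_{L^p(\mu_{1,1})}\le\|f'\|_{L^p(\mu_1)}$ since $\mu_{1,1}\le\mu_1$. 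Combining with the two displayed inequalities above I obtain
$$
\|f\|_{L^p(\mu_1)}\le (c_1+k^{1/p})\bigl(\|f\|_{L^p(\mu_0)}+\|f'\|_{L^p(\mu_1)}\bigr),\qquad\forall f\in\PP,
$$
which by \eqref{maininequality00} is precisely the statement that the MO is bounded on $\PP^{1,p}(\mu_0,\mu_1)$.

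There is essentially no main obstacle: the whole work was done in Theorem~\ref{t:subinterval4}, and the only point deserving mention is the verification that the hypotheses on $J$ and $H$ transfer unchanged from $\mu_1$ to $\mu_{1,1}$ (they do, since $J$ and $H$ are defined in terms of the piecewise monotone summand). If any care is needed, it is only in ensuring that the additivity $\int |f|^p\,d\mu_1=\int |f|^p\,d\mu_{1,1}+\int |f|^p\,d\mu_{1,2}$ is applied correctly, and that the domination $\mu_{1,1}\le\mu_1$ is used when passing to $\|f'\|_{L^p(\mu_1)}$ on the right-hand side.
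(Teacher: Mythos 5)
Your proof is correct and follows exactly the route the paper intends (the paper states the corollary as an immediate consequence of Theorem \ref{t:subinterval4} without writing out the details): apply Theorem \ref{t:subinterval4} to $(\mu_0,\mu_{1,1})$, control the $\mu_{1,2}$ part trivially via $\mu_{1,2}\le k\mu_0$, and combine using the additivity of $\int|f|^p$ over $\mu_1=\mu_{1,1}+\mu_{1,2}$ together with the characterization \eqref{maininequality00}. Your remark that $J$ and $H$ are taken with respect to the piecewise monotone component $\mu_{1,1}$ is the right reading of the statement.
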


\

\end{document}